\newcommand{\bbN}{\mathbb{N}}
\newcommand{\bbR}{\mathbb{R}}
\newcommand{\R}{\mathbb R}
\newcommand{\I}{\mathbb I}
\theoremstyle{plain}
\newtheorem{theorem}{Theorem}[section]
\newtheorem{lemma}[theorem]{Lemma}
\newtheorem*{lemma*}{Lemma}
\newtheorem{proposition}[theorem]{Proposition}
\newtheorem{corollary}[theorem]{Corollary}
\newtheorem{question}[theorem]{Question}
\theoremstyle{definition}
\newtheorem{definition}[theorem]{Definition}
\theoremstyle{remark}
\newtheorem{remark}[theorem]{Remark}
\newtheorem{example}[theorem]{Example}
\numberwithin{equation}{section}
\newcommand{\allbf}[1]{\textbf{\boldmath{#1}}}
\newcommand{\twist}{\Delta}
\newcommand{\lsat}[3]{\ensuremath{\Sigma_{#1}(#2,#3)}}
\newcommand{\halfN}{\frac{1}{2}\bbN}
\newcommand{\components}{\mathcal{C}}
\newcommand{\base}{\text{base}}
\newcommand{\figscale}{0.5625}
\newcommand{\xmapsto}[2][]{\ext@arrow 0599{\mapstofill@}{#1}{#2}}
\def\mapstofill@{\arrowfill@{\mapstochar\relbar}\relbar\to}
\begin{document}

\title{Legendrian Satellites and Decomposable Cobordisms}

\author[R.\ Guadagni]{Roberta Guadagni}
\address{Department of Mathematics, University of Pennsylvania} 
\email{robig@sas.upenn.edu}

\author[J.M.\ Sabloff]{Joshua M. Sabloff}
\address{Department of Mathematics and Statistics, Haverford College}
\email{jsabloff@haverford.edu}

\author[M.\ Yacavone]{Matthew Yacavone} 
\address{Galois, Inc.}
\email{matthew@yacavone.net}

\date{\today}

\keywords{Legendrian knot, satellite knot, Lagrangian cobordism}


\thanks{When working on this paper, RG was a postdoc at the University of Pennsylvania supported by the Simons' Collaboration HMS grant.  JMS was partially supported by NSF grant DMS-1406093. MY was partially supported by a grant from Haverford College.}

\begin{abstract}
We investigate the interactions between the Legendrian satellite construction and the existence of exact, orientable Lagrangian cobordisms between Legendrian knots.  Given Lagrangian cobordisms between two Legendrian knots and between two Legendrian tangles, we construct a Lagrangian cobordism between Legendrian satellites of the knots by the closures of the tangles, with extra twists on both the top and the bottom satellite to compensate for the genus of the cobordism.  If the original cobordisms were decomposable, then a decomposable cobordism between satellites exists as well, again with extra twists.
\end{abstract}

\maketitle

\section{Introduction}
\label{introduction}

The relation of exact Lagrangian cobordism between Legendrian submanifolds has been a focus of much recent research, with many new obstructions and constructions being developed; see \cite{blllmppst:cob-survey} for a survey.  The goal of this paper is detail a new construction of Lagrangian cobordisms between Legendrian knots using the Legendrian satellite construction. In particular, starting with Lagrangian cobordisms between two Legendrian knots and between two Legendrian tangles, we describe two constructions, one geometric and one combinatorial, of a Lagrangian cobordism between Legendrian satellites of the knots by the closures of the tangles. The construction requires extra twists to compensate for the genus of the cobordism.

In order to state our constructions more precisely and to better motivate the paper, we begin with the formal definition of a Lagrangian cobordism between Legendrians; note that for a subset $A \subset \bbR$, we denote $L \cap (A \times Y)$ by $L_A$.

\begin{definition} \label{defn:lagr-cob}
    Given two Legendrian links $\Lambda_-$ and $\Lambda_+$ in a contact $3$-manifold $(Y, \alpha)$, a \allbf{Lagrangian cobordism} $L$ from $\Lambda_-$ to $\Lambda_+$, denoted $\Lambda_- \prec_L \Lambda_+$, is an exact, orientable, properly embedded Lagrangian submanifold of the symplectization $(\bbR \times Y, d(e^t \alpha))$ that admits a pair of real numbers $T_\pm$ satisfying
    \begin{enumerate}
        \item $L_{(-\infty,T_-]}  = (-\infty,T_-] \times \Lambda_-$ (\textbf{cylindrical lower end}), and
        \item $L_{[T_+,\infty)} = [T_+,\infty) \times \Lambda_+$ (\textbf{cylindrical upper end}). 
            \end{enumerate}
    If $Y$ and $\Lambda_\pm$ have boundary, with $\Lambda_\pm$ properly embedded, then we also require $L$ to be cylindrical in a neighborhood of the boundary.
    
    In the case of disconnected Legendrians, we moreover distinguish between: 
     \begin{itemize}
        \item a \textbf{leveled Lagrangian cobordism}, when the primitive of $e^t\alpha$ along $L$ is constant for $t < T_-$ and for $t> T_+$, and
        \item an \textbf{unleveled Lagrangian cobordism} otherwise.
     \end{itemize}
\end{definition}

\begin{remark}\label{thetaisconstant}
When unspecified, we assume that Lagrangian cobordisms are leveled. This is the standard definition, as it is required, for instance, in order to preserve exactness when composing cobordisms. Note that, if the Legendrians at the ends are connected, any Lagrangian cobordism is leveled, as the primitive is always \emph{locally} constant. For a more thorough discussion of this condition, see \cite{chantraine:disconnected-ends}.
\end{remark}
 
Lagrangian cobordisms induce a relation on the set of isotopy classes of Legendrian knots.  This relation's basic properties differ from those of the relation induced by smooth cobordism.  Let us consider three subtleties.  First, while smooth cobordism induces an equivalence relation on smooth knots, the Lagrangian cobordism relation between Legendrians is not symmetric  \cite{bs:monopole,chantraine:non-symm, cns:obstr-concordance}.  Second, while the smooth collar neighborhood theorem allows any two smooth cobordisms $S_1$ from $K_0$ to $K_1$ and $S_2$ from $K_1$ to $K_2$ to be stacked one on top of another to form a new cobordism $S_1 \odot S_2$,  unleveled cobordisms cannot always be stacked \cite{chantraine:collar}, hence we require all  Lagrangian cobordisms to be leveled when we want the cobordism relation to be transitive. Finally, and of particular interest for this paper, Morse theory tells us that a generic smooth cobordism may  be decomposed into a stack of elementary cobordisms arising from smooth isotopy or handle attachment. The decomposability of Lagrangian cobordisms is more delicate.  On one hand, there are  constructions of elementary Lagrangian cobordisms corresponding to isotopy and the attachment of $0$- and $1$-handles (see Section~\ref{cobordismsBkg}, and particularly Figure~\ref{fig:construct}, for details).  Given a sequence of elementary Lagrangian cobordisms $\Lambda_0 \prec_{E_1} \Lambda_1, \ldots, \Lambda_{k-1} \prec_{E_k} \Lambda_k$, we say that the Lagrangian cobordism $\mathbf{L} = E_1 \odot \cdots \odot E_k$ is \allbf{decomposable}.  On the other hand, the following question is open as of this writing:

\begin{question} \label{q:main}
    If a pair $\Lambda_\pm$ of non-empty, non-stabilized Legendrian links is Lagrangian cobordant, does there exist a decomposable cobordism between them?
\end{question}

Note that the stipulation of non-empty, non-stabilized Legendrians at the ends is necessary, as there exist non-decomposable Lagrangian caps $\Lambda \prec_L \emptyset$ when $\Lambda$ is sufficiently stabilized \cite{lin:lagr-cap}.  See \cite{blllmppst:cob-survey} for further discussion.

One possible source of Lagrangian cobordisms without decomposable analogues arises from the Legendrian satellite construction.  We briefly set notation, delaying precise definitions to Section~\ref{background}.  Given a Legendrian knot $\Lambda$ and a Legendrian $n$-tangle $\Pi$ in $J^1[0,1]$ with closure $\phi(\Pi)$ in $J^1S^1$, we denote the Legendrian satellite of $\Lambda$ by $\phi(\Pi)$ with the contact (Thurston–Bennequin) framing by $\lsat{}{\Lambda}{\Pi}$. Given a Lagrangian \emph{concordance} from $\Lambda_-$ to $\Lambda_+$ and an $n$-stranded pattern $\Pi$, one can find, via a geometric construction in \cite[Theorem 2.4]{cns:obstr-concordance}, a Lagrangian concordance from $\lsat{}{\Lambda_-}{\Pi}$ to $\lsat{}{\Lambda_+}{\Pi}$. It is not at all clear that the result of this construction is decomposable even if the original Lagrangian is.  In fact,  Cornwell, Ng, and Sivek \cite[Conjecture 3.3]{cns:obstr-concordance} construct a candidate for concordant pair of Legendrians that are not decomposably concordant by taking the Whitehead double of a concordance from the maximal Legendrian unknot to a Legendrian $m(9_{46})$ knot.

The two main results of this paper generalize the construction of Cornwell, Ng, and Sivek to Lagrangian cobordisms of arbitrary topology and supply sufficient conditions for the existence of a decomposable cobordism between satellites. Both results use the full twist Legendrian tangle in $\twist \subset J^1[0,1]$, pictured in Figure~\ref{fig:twist}.

The first main theorem arises from a geometric construction.

\begin{figure}
    \centering
    \includegraphics[scale=0.75]{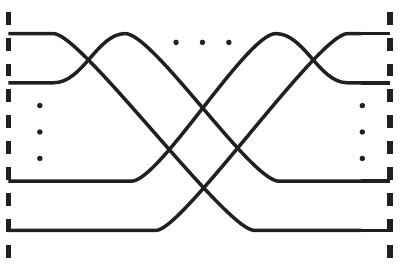} \hspace{2em}
    \includegraphics[scale=0.75]{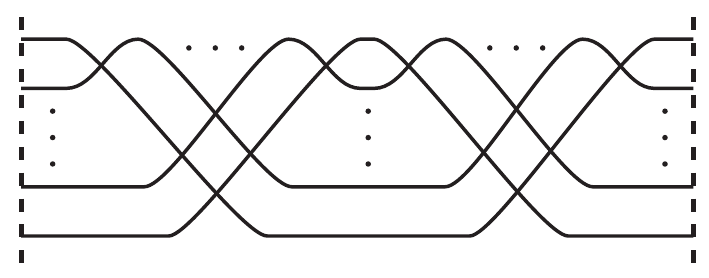}
    \caption{On the left, the Legendrian $n$-stranded half twist $\twist^{1/2}$ with $n(n-1)/2$ crossings. On the right, the Legendrian full twist $\twist$ as the composition of two half-twists.}
    \label{fig:twist}
\end{figure}

\begin{theorem}\label{thm:geom}
    Suppose $\Lambda_\pm$ are non-empty Legendrian knots in a contact manifold $(Y, \alpha)$ and $\Pi_\pm$ are Legendrian $n$-tangles in the standard $J^1[0,1]$. Given Lagrangian cobordisms $\Lambda_- \prec_L \Lambda_+$ and $\Pi_- \prec_P \Pi_+$, there exists a Lagrangian cobordism
        \[ \lsat{}{\Lambda_-}{\twist^{2g(L)+1}\Pi_-} \prec \lsat{}{\Lambda_+}{\Delta\Pi_+}, \]
and a possibly non-orientable Lagrangian cobordism
\[ \lsat{}{\Lambda_-}{\Delta \Pi_-} \prec \lsat{}{\Lambda_+}{\Delta \Pi_+}. \]
\end{theorem}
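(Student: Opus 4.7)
The plan is to realize the desired Lagrangian cobordism geometrically by embedding the tangle cobordism $P$ into a Weinstein neighborhood of $L$ in the symplectization, using extra full twists to absorb the framing discrepancy introduced by the topology of $L$. First, I would apply a version of the Weinstein Lagrangian neighborhood theorem adapted to symplectizations to produce a symplectic identification of a neighborhood of $L$ in $(\bbR \times Y, d(e^t\alpha))$ with a neighborhood of the zero section in $(T^*L, d\lambda_L)$, arranged so that near each cylindrical end of $L$ the identification restricts to the standard contact neighborhood of $\Lambda_\pm$ used in the Legendrian satellite construction. The Thurston--Bennequin framing along $\Lambda_\pm$ extends to a preferred normal direction along $L$ which, together with the cotangent fibers, provides a local model of $J^1[0,1]$ along $L$ into which $P$ can be inserted fiberwise.

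With this setup in place, the second step is to embed $P \subset \bbR \times J^1[0,1]$ into the neighborhood to obtain a Lagrangian surface whose ends agree with the prescribed Legendrian satellites up to some number of extra full twists. The key obstruction is a framing discrepancy: as one traverses $L$ from bottom to top inside the Weinstein neighborhood, the trivialization of the insertion direction in the normal bundle accumulates a twist quantified by Chantraine's formula $\tb(\Lambda_+)-\tb(\Lambda_-) = -\chi(L) = 2g(L)$. Absorbing this into $2g(L)$ extra full twists inserted into $\Pi_-$ at the bottom, plus one full twist $\twist$ at each end that is needed for the baseline insertion (to arrange the satellite strands in a Weinstein-neighborhood-compatible fashion), yields the claimed orientable cobordism.

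For the possibly non-orientable statement, the key observation is that a pair of full twists on two adjacent strands bounds a non-orientable Lagrangian saddle, essentially a M\"obius-band move that permutes the strands. Attaching $g(L)$ such non-orientable saddles at the bottom absorbs all $2g(L)$ extra full twists at the cost of orientability and produces the second cobordism $\lsat{}{\Lambda_-}{\twist\Pi_-} \prec \lsat{}{\Lambda_+}{\twist\Pi_+}$. The main technical obstacle is the precise computation of the framing twist --- verifying that it equals exactly $2g(L)$ and not some shifted or doubled quantity --- together with ensuring that the inserted Lagrangian is genuinely exact (so that primitives from the Weinstein chart extend across the insertion) and is cylindrical over the required satellites near the ends of $L$.
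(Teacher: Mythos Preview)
Your overall architecture for the orientable cobordism---Weinstein neighborhood of $L$, insert $P$, account for twisting---matches the paper's strategy, but the central construction is missing. You speak of ``embedding $P$ fiberwise'' using a ``preferred normal direction,'' but $P$ lives in $\bbR \times J^1[0,1]$, not in $T^*L$, and there is no canonical $n$-copy of $L$ sitting inside its Weinstein neighborhood. The paper's key step is to build a non-singular function $f:L\to\bbR$ with carefully controlled cylindrical behavior at both ends, so that the union of graphs $\bigcup_i \Gamma(\tfrac{i}{n}df)$ is an embedded, exact, \emph{leveled} Lagrangian $n$-copy of $L$; the tangle cobordism $P$ is then sewn in along an embedded strip on which $f$ is globally of the form $e^t + C$. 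The $2g(L)$ twists at the bottom arise not from a $\tb$ computation but from Poincar\'e--Hopf: a non-singular $f$ forces the winding number of $\nabla f$ along $\partial L$ to be $\chi(L)$, and the extra $+1$ on each end comes from the requirement that $f$ be \emph{globally cylindrical} (so that the resulting cobordism is leveled and $P$ can actually be inserted). You do not mention the leveled condition at all, and without it the output need not be a Lagrangian cobordism in the sense of Definition~\ref{defn:lagr-cob}; the paper devotes substantial effort (Proposition~\ref{prop:displaced-cobordism}, Lemma~\ref{lem:disconnected_ends}) to this point.

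Your non-orientable argument is genuinely different from the paper's and, as written, does not work. The paper does \emph{not} absorb the extra $2g(L)$ twists by attaching non-orientable saddles between strands. Instead it precomposes $L$ with a $\wedge$-handle cobordism $L_\wedge$ from $\Lambda_-\cup\Upsilon$ to $\Lambda_-$, applies the orientable construction to $L_\wedge\odot L$ while routing all $2g(L)+1$ twists onto the unknot component $\Upsilon$, and then caps off $\Sigma(\Upsilon,\twist^{2g(L)+1})$ with a Lagrangian filling. Your ``M\"obius-band move'' would need to be an exact Lagrangian in the symplectization with Legendrian ends differing by two full twists, and you have not exhibited such an object; in fact the standard elementary Lagrangian cobordisms are all orientable, so producing a controlled non-orientable piece requires a separate argument.
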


The second main theorem provides a decomposable counterpart of the first for Legendrians in the standard contact $\bbR^3$. To state the second theorem, we require the following technical property:

\begin{definition}[Property A]
\label{def:property-A}
A decomposable cobordism $\mathbf{L}$ satisfies \allbf{Property A} if for every $1$-handle $E_i$ in $\mathbf{L}$ that increases the number of components from $\Lambda_{i-1}$ to $\Lambda_i$,
there exists a path in $\mathbf{L}$ which starts at the base $\Lambda_{i-1}$ of $E_i$, ends at some component of $\Lambda_-$, and whose symplectization coordinate is monotone decreasing.
\end{definition}

In many cases one can deform $\mathbf{L}$ to ensure that it satisfies Property A, but the authors are unsure if this can always be done.  Note that if $\mathbf{L}$ is a concordance or has no 0-handles, then $\mathbf{L}$ always satisfies Property A.

\begin{theorem} \label{thm:decomp}
      Suppose $\Lambda_\pm$ are non-empty Legendrian knots in $(\bbR^3, \alpha_0)$ and $\Pi_\pm$ are Legendrian $n$-tangles in the standard $J^1[0,1]$.  Given  decomposable Lagrangian cobordisms $\Lambda_- \prec_\mathbf{L} \Lambda_+$ and $\Pi_- \prec_\mathbf{P} \Pi_+$ such that $\mathbf{L}$ satisfies Property A, there exists a  decomposable Lagrangian cobordism
 \[ \lsat{}{\Lambda_-}{\twist^{2g(\mathbf{L})+1}\Pi_-} \prec
               \lsat{}{\Lambda_+}{\twist\Pi_+}, \]
and (regardless of whether $\mathbf{L}$ satisfies Property A) a possibly non-orientable decomposable Lagrangian cobordism
\[ \lsat{}{\Lambda_-}{\twist^{1/2} \Pi_-} \prec \lsat{}{\Lambda_+}{\twist^{1/2} \Pi_+}. \]
\end{theorem}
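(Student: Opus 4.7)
The plan is to induct on the total number of elementary cobordisms in fixed decompositions $\mathbf{L} = E_1^\Lambda \odot \cdots \odot E_k^\Lambda$ and $\mathbf{P} = F_1^\Pi \odot \cdots \odot F_m^\Pi$, producing at each step a decomposable satellite sub-cobordism while carefully accounting for the twist factors accumulated at the two ends. The pieces coming from $\mathbf{P}$ require no new ideas: because $\Pi_\pm$ lives in $J^1[0,1]$, the satellite construction inserts a copy of $\Pi$ into a standard contact neighborhood of the companion, so any Legendrian isotopy, $0$-handle, or $1$-handle in $\mathbf{P}$ lifts verbatim to an analogous elementary move on the satellite and leaves the twist counts unchanged. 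Legendrian isotopies in $\mathbf{L}$ similarly induce decomposable satellite isotopies via the standard reduction to Reidemeister moves.

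The real content lies in the $0$- and $1$-handle pieces of $\mathbf{L}$. A $0$-handle in $\mathbf{L}$ births a $\tb = -1$ Legendrian unknot $U$, and the satellite $\lsat{}{U}{\twist \Pi}$ is Legendrian isotopic to the closure of $\twist \Pi$. This closure can be built decomposably by first attaching $n$ disjoint $0$-handles to create the strands and then attaching $n-1$ Legendrian $1$-handles along the crossings supplied by one of the half-twists in $\twist$; without this extra $\twist$, such a decomposable closure would not be available, which is precisely why the top satellite pattern must retain one factor of $\twist$. A $1$-handle $E_i$ in $\mathbf{L}$ is the essential case: it is a pinch joining two arcs of $\Lambda_{i-1}$, and at the satellite level each arc becomes a bundle of $n$ parallel strands, the two bundles related by a full braid twist $\twist$ coming from the contact framing along the attaching arcs. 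We consume one $\twist$ factor from the lower-end twists and expand it into a sequence of Reidemeister II moves plus $n$ Legendrian $1$-handles realizing the multi-strand pinch. Property A supplies a monotonically decreasing path from the base of $E_i$ to $\Lambda_-$, which is exactly what is needed to isotope one of the two bundles through the twist region while preserving orientations; if Property A fails, the supporting slide becomes orientation-reversing.

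The total twist count is a standard Morse-theoretic bookkeeping: each $1$-handle in $\mathbf{L}$ absorbs one $\twist$ factor on the bottom, and relating the count of $1$-handles to the genus $g(\mathbf{L})$ together with the leftover $\twist$ demanded by the $0$-handle step yields $\twist^{2g(\mathbf{L})+1}\Pi_-$ on the bottom and $\twist \Pi_+$ on the top. For the non-orientable variant, Property A is dropped; instead each satellite pinch is completed by a symmetric half-twist $\twist^{1/2}$ at each end, at the cost of possibly losing orientability. The principal obstacle throughout is the $1$-handle case in $\mathbf{L}$, which requires realizing the multi-strand pinch decomposably, using Property A to keep the supporting slides oriented, and verifying that exactly one $\twist$ per $1$-handle is absorbed; once that is in hand, the twist count and the non-orientable variant follow from direct modifications of the argument.
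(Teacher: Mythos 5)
Your high-level strategy --- lift the elementary pieces of $\mathbf{L}$ and $\mathbf{P}$ one at a time, use Reidemeister~II moves to bring a (half-)twist to the handle-attachment site so that $n$ parallel $1$-handles can be attached, and then do a Morse-theoretic twist count --- is the same as the paper's, which formalizes the bookkeeping via ``twist functions'' on the set of components of all levels of $\mathbf{L}$ (Theorem~\ref{thm:decompTwist} and Proposition~\ref{prop:exists-tw-fn}). However, there are two genuine gaps.

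First, you misidentify what Property~A is for. It is not about keeping a ``supporting slide'' orientation-preserving at an individual $1$-handle; the orientation constraints at a $\Sigma$-$1$-handle are handled by choosing between the half-twist version (symmetric orientation) and the full-twist version, independently of Property~A. The actual issue is global routing of twists: a $\vee$-handle forces its base component to carry \emph{more} twists than the two components it creates (in the paper's notation, $f(\Lambda) = f(\Lambda') + f(\Lambda'') + 1$), and this surplus must be propagated \emph{downward} along a monotone path until it reaches either $\Lambda_-$ or a component born by a $0$-handle. Property~A guarantees the former is always possible. If the surplus lands on a $0$-handle component $\Upsilon$, that component must be born already carrying $k>0$ extra twists, i.e.\ one needs a decomposable filling of $\lsat{}{\Upsilon}{\twist^{1+k}_s}$, which the paper only obtains for uniform orientation $s$ (via positivity of the braid closure). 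So without Property~A your induction can get stuck at a $0$-handle, and your stated reason for needing Property~A would not diagnose this. Relatedly, your $\Sigma$-$0$-handle step is off: $\lsat{}{\Upsilon}{\twist}$ is an $n$-component unlink, filled by $n$ disjoint $0$-handles followed by an isotopy --- no $1$-handles are attached there.

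Second, your non-orientable statement does not come out with the claimed twist count. Running your induction with half-twists at each pinch produces a cobordism from $\lsat{}{\Lambda_-}{\twist^{1/2+2g(\mathbf{L})}\Pi_-}$, not from $\lsat{}{\Lambda_-}{\twist^{1/2}\Pi_-}$. To remove the surplus $2g(\mathbf{L})$ twists at the bottom one needs an additional construction (the paper's Corollary~\ref{cor:remove-twists}): prepend to $\mathbf{L}$ a trivial $0$-handle followed by a $\wedge$-handle, assign the surplus twists to the new unknot component, and cap that component off using the (possibly non-orientable) decomposable filling of $\lsat{}{\Upsilon}{\twist^{1+k}}$ available when orientations are ignored. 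Your proposal has no mechanism for this reduction.
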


See Proposition~\ref{prop:main_geom}, Theorem~\ref{thm:decompTwist}, and Corollary~\ref{cor:decompGen} for examples of more refined conditions under which Lagrangian cobordisms --- decomposable or otherwise --- exist between satellites.

\begin{remark}
Although the two main theorems are semantically similar, the cobordisms constructed in their proofs may be quite different. 
\end{remark}

\begin{remark}
The orientable and possibly non-orientable cobordisms in each theorem have related constructions, but are quite different geometrically.  For example, the orientable cobordisms in both main theorems lie in a $C^1$ neighborhood of the original cobordism $L$, while the possibly non-orientable cobordisms may not even lie in a $C^0$ neighborhood of $L$.
\end{remark}

With Theorems~\ref{thm:geom} and \ref{thm:decomp} in hand, one can approach Question~\ref{q:main} by trying to find a pair of non-stabilized Legendrians $\Lambda_\pm$ that are Lagrangian cobordant but not decomposably cobordant.  A first step would involve a search for a decomposable cobordism $L$ that does \emph{not} satisfy Property $A$ and is not isotopic to a cobordism satisfying Property $A$.  One would then use Theorem~\ref{thm:geom} to construct a satellite of the cobordism $L$; the resulting cobordism would be a higher-genus candidate than the aforementioned construction in \cite{cns:obstr-concordance}.

The remainder of the paper is organized as follows. We review necessary background on Legendrian tangles, the Legendrian satellite construction, and Lagrangian cobordisms between Legendrians in Section~\ref{background}. We then proceed to prove Theorem~\ref{thm:geom} in Section~\ref{geom} (by proving Proposition \ref{prop:main_geom} and Proposition \ref{main_geom_nonorientable}). We prove Theorem~\ref{thm:decomp} in Section~\ref{decomp} as a consequence of the more general Theorem~\ref{thm:decompTwist}.  In Section~\ref{sec:connect-sum}, we conclude with some remarks about connected sums and generalizations of the results in this paper to higher dimensions.

\subsection*{Acknowledgements}

The authors thank John Etnyre, Yanhan Liu, Sipeng Zhou, Baptiste Chantraine, and the participants of the Philadelphia Area Contact / Topology seminar for stimulating discussions about the material herein. The authors also thank the referee for their constructive and valuable suggestions.

\section{Background and Foundations}
\label{background}

In this section, we describe the three central objects in this paper in more detail:  Legendrian $n$-tangles, Legendrian satellites, and decomposable Lagrangian cobordisms.  We assume familiarity with basic notions of Legendrian knot theory (as in \cite{etnyre:knot-intro, geiges:intro}) and of Lagrangian submanifolds (as in \cite{audin-lalonde-polterovich}).

\subsection{Legendrian \textit{n}-Tangles} 
\label{solid-torusBkg}

The canonical solid cylinder in contact topology is the $1$-jet space $J^1[0,1] = [0,1] \times \bbR^2$, which has coordinates $(x, y, z)$ and contact form $dz-y\,dx$. For $n > 0$, a \allbf{Legendrian $n$-tangle $\Pi$} is a proper Legendrian embedding of $n$ disjoint intervals and a finite number of disjoint copies of $S^1$ into $J^1[0,1]$ 
such that the front projections of the intervals are horizontal near the boundary and $\Pi\cap \left.J^1[0,1]\right|_{x=i} = \bigl\{(i,0,\frac{k}{n+1})\;:\; 1 \leq k \leq n\bigr\}$ for $i=0,1$.

An \textbf{orientation} on an $n$-tangle is an orientation of each of the components such that the orientation at $J^1\{0\}$ matches that at $J^1\{1\}$. For any orientation $s$, let $-s$ be its reversed orientation and let $\overline{s}$ be defined as follows: the value of $\overline{s}$ on the $i^{\text{th}}$ strand at $J^1\{0\}$ is the value of $s$ on the $(n+1-i)^{\text{th}}$ strand at $J^1\{0\}$, and  on any disjoint $S^1$, the value of $\overline{s}$ is equal the value of $s$. An orientation $s$ is \textbf{symmetric} if $s = \overline{s}$ and \textbf{uniform} if the value of $s$ agrees on all strands at $J^1\{0\}$.

\begin{example}
 While the full twist $\twist_s$ pictured in Figure~\ref{fig:twist} is an oriented tangle for any choice of orientation $s$, the half-twist $\twist^{1/2}_s$ is an oriented tangle only when $s$ is symmetric.
\end{example}

The canonical quotient map $[0,1] \to S^1$ yields a  Legendrian solid torus link $\phi(\Pi) \subseteq J^1S^1$ for every Legendrian $n$-tangle $\Pi$; if the tangle is oriented, then the torus link inherits the orientation. The link $\phi(\Pi)$ is well-defined up to Legendrian isotopy \cite{lenny-lisa}.

The operation of adding a full twist to an existing Legendrian $n$-tangle plays a key role in the statement of Theorems \ref{thm:geom} and \ref{thm:decomp}.   Generally speaking, if $\Pi_1$ and $\Pi_2$ are both Legendrian $n$-tangles, then we can form a new Legendrian $n$-tangle $\Pi_1 \Pi_2$ by identifying $\Pi_1\cap\{1\}$ with $\Pi_2\cap\{0\}$ and scaling the $x$ and $y$ directions appropriately. We will be particularly interested in products of the form $\Delta^k\Pi$.  

\begin{remark}
The ability to use these products is the reason we work with Legendrian $n$-tangles instead of directly with Legendrian solid torus links in defining satellites:  as noted in \cite[Remark 4.2]{ev:satellites}, adding a positive twist to a Legendrian solid torus link is not a well-defined operation.  
\end{remark}

\begin{remark}\label{clockwise_twist}
The twist $\Delta$ is a clockwise rotation when moving left to right; a counterclockwise twist would require cusps in the front diagram, and hence would not project $n$-to-$1$ to the base over every point.
\end{remark}

\subsection{Legendrian Satellites}
\label{satellitesBkg}

To define the Legendrian satellite of a Legendrian knot $\Lambda \subset (Y, \alpha)$  by a Legendrian $n$-tangle $\Pi \subset J^1[0,1]$, we  follow \cite{ev:satellites,ng:satellite, lenny-lisa}. It is a fundamental fact that $\Lambda$ has a canonical neighborhood that is contactomorphic to $J^1S^1$.  We define the \allbf{Legendrian satellite} of $\Lambda$ by $\Pi$, denoted $\lsat{}{\Lambda}{\Pi}$, to be the link that results from removing a standard neighborhood of $\Lambda$ and gluing in the solid torus containing $\phi(\Pi)$. Note that the satellite operation is well-defined \cite{lenny-lisa}. There is an obvious generalization to satellites of $r$-component Legendrian links by $r$-tuples of Legendrian $n$-tangles. Note that, unlike in the smooth case, the Legendrian satellite has a built-in framing governed by the Thurston--Bennequin number of $\Lambda$.  

At a diagrammatic level, Figure~\ref{fig:front-sat} depicts a procedure for drawing a front diagram of $\lsat{}{\Lambda}{\Pi}$ from front diagrams of $\Lambda$ and of $\Pi$.  Suppose that $\Pi$ is a Legendrian $n$-tangle. Start by taking the $n$-copy of a front diagram of $\Lambda$, i.e. $n$ copies of $\Lambda$ that differ by a small vertical shift. Replace the trivial $n$-tangle $\I_n$ created by the $n$-copy of a small interval in the front of $\Lambda$ (away from cusps and crossings) with the front diagram of $\Pi$.  The result is a front diagram for $\lsat{}{\Lambda}{\Pi}$.

\begin{figure}
\labellist
    \small
    \pinlabel $\Lambda$ [l] at 140 120
    \pinlabel $\Pi$ [l] at 140 10
    \pinlabel $\lsat{}{\Lambda}{\Pi}$ [l] at 380 10
\endlabellist

    \centering
    \includegraphics[width=4.5in]{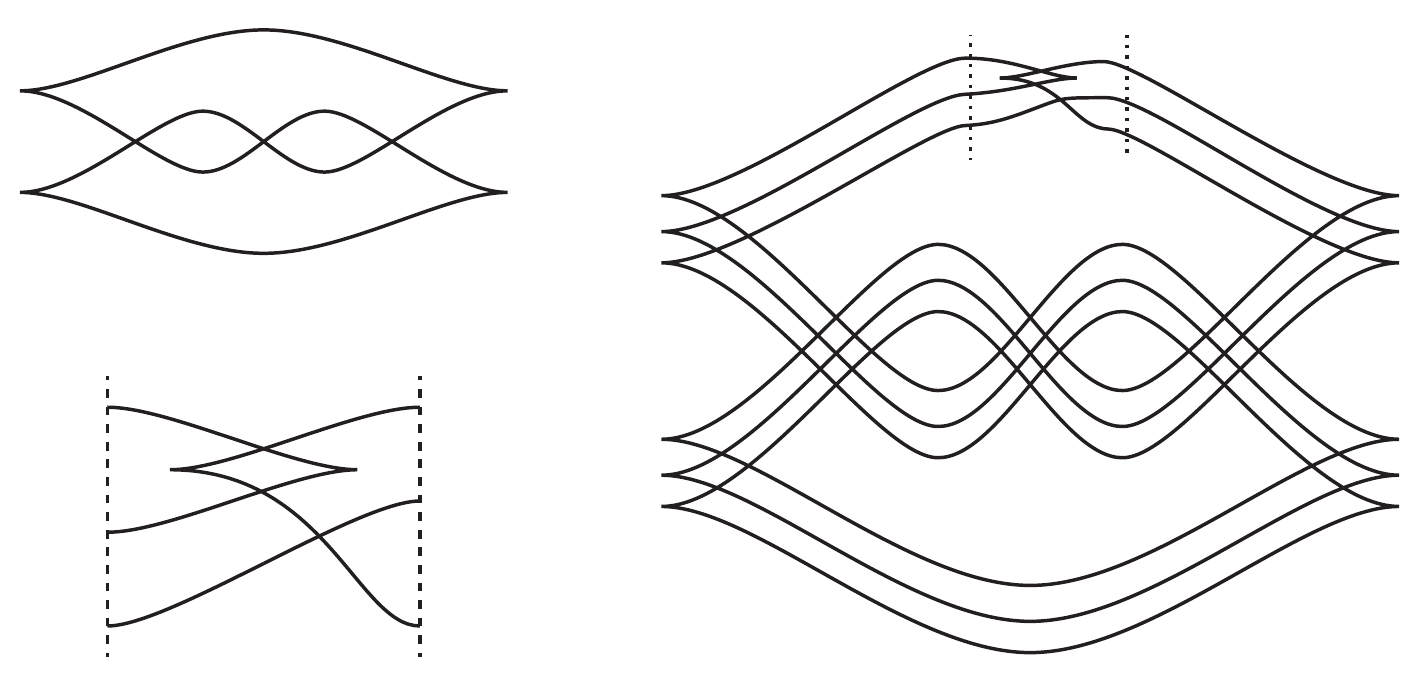}
    \caption{A procedure for creating a front diagram for $\lsat{}{\Lambda}{\Pi}$.} 
    \label{fig:front-sat}
\end{figure}

\subsection{Decomposable Cobordisms}
\label{cobordismsBkg}

A Lagrangian cobordism is \allbf{elementary} if it arises from one of the  cobordisms defined by the following theorem. 

\begin{theorem}[\cite{bst:construct, rizell:surgery, ehk:leg-knot-lagr-cob}] \label{thm:construct}
    If two Legendrian links $\Lambda_-$ and $\Lambda_+$ are related by any of the following moves, then there exists a Lagrangian cobordism $\Lambda_- \prec_L \Lambda_+$.
    \begin{description}
        \item[Isotopy] $\Lambda_-$ is Legendrian isotopic to $\Lambda_+$.
        \item[$0$-handle] The front diagrams for $\Lambda_-$ and $\Lambda_+$ are identical except for the addition of a disjoint maximal Legendrian unknot $\Upsilon$ in $\Lambda_+$ as in Figure~\ref{fig:construct} (left).
        \item[$1$-handle] The front diagrams for $\Lambda_-$ and $\Lambda_+$ are related as in Figure~\ref{fig:construct} (right).
    \end{description}
\end{theorem}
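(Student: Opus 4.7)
The plan is to construct each of the three elementary cobordisms directly in the symplectization $(\bbR \times Y, d(e^t \alpha))$, treating the three moves separately since they are geometrically distinct. In each case the task is to exhibit a concrete Lagrangian submanifold and then verify that it is exact, orientable, properly embedded, and cylindrical at both ends in the sense of Definition~\ref{defn:lagr-cob}.

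For the isotopy move, the first step is to lift a Legendrian isotopy $\Lambda_t$ from $\Lambda_-$ to $\Lambda_+$ to a compactly supported ambient contact isotopy $\varphi_t$ of $Y$ via the Legendrian isotopy extension theorem. This contact isotopy has a contact Hamiltonian $H_t: Y \to \bbR$, and the lifted function $e^t H_t$ on the symplectization generates a genuine Hamiltonian flow $\Phi_s$. After reparametrizing the time variable by a monotone cutoff so that the isotopy occurs inside a compact interval $[T_-, T_+]$ and is the identity outside it, the suspension $L = \{(t, \Phi_t(x)) : x \in \Lambda_-,\ t \in \bbR\}$ is an exact Lagrangian cylinder with the two required cylindrical ends. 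Exactness comes from the fact that contact isotopies preserve the Legendrian condition, so $\alpha$ pulls back to zero on each time slice and the primitive of $e^t\alpha$ along $L$ is controlled by the integral of $e^t H_t$.

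For the $0$-handle, I would use the standard Lagrangian filling disk of the maximal Legendrian unknot $\Upsilon$: the round Lagrangian $2$-disk in $\bbR^4$ whose front projection is the familiar eye-shaped region with two cusps. Identifying a neighborhood of $\Upsilon$ in the upper end of the symplectization with an appropriate neighborhood of the boundary of that disk (Weinstein neighborhood theorem), we graft the disk in so that it is cylindrical over $\Upsilon$ near the top and empty below some $T_-' < T_-$. The disjoint union with the cylinder over $\Lambda_+ \setminus \Upsilon = \Lambda_-$ then gives the full cobordism. For the $1$-handle, I would reduce to a Darboux chart where the front-projection move is local, thereby working in standard $\bbR^3$, and then write down an explicit Lagrangian saddle realizing the pinch. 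Concretely, after placing the two relevant strands parallel in the chart, the local model can be taken as a $t$-dependent family of scaled hyperbolae whose defining function passes through zero between $T_-$ and $T_+$, interpolating between the two crossing cusps at the bottom and the two horizontal strands at the top (or vice versa). Exactness and orientability of this local piece are direct to check, and gluing it to the cylinder over the unchanged portion of the front completes the construction.

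The main obstacle, common to all three cases, is the gluing step. One must check that each local Lagrangian model matches the cylindrical complement in a $C^\infty$ fashion, that the exact primitives of $e^t \alpha$ on the two pieces agree across the interface so the assembled surface is genuinely exact (not merely locally exact), and that the resulting submanifold is embedded. These compatibilities are arranged by a careful choice of cutoff function for the isotopy case and by standard local models in the handle cases; once verified, each elementary move produces a Lagrangian cobordism of the claimed form, and the theorem follows.
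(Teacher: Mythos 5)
The paper does not prove this theorem; it is imported verbatim from the cited references (Bourgeois--Sabloff--Traynor, Dimitroglou Rizell, and Ekholm--Honda--K\'alm\'an), so there is no internal proof to compare against. Your outline follows the same strategy as those sources: the isotopy cobordism via a lift of the contact Hamiltonian to the symplectization, the $0$-handle via the standard exact Lagrangian filling disk of the maximal unknot grafted onto a trivial cylinder, and the $1$-handle via an explicit local saddle model in a Darboux chart, with exactness and matching of primitives checked at each gluing. One point needs repair in the isotopy case: the naive trace $\{(t,\varphi_t(x))\}$ of the contact isotopy is \emph{not} Lagrangian in general, since for $v$ tangent to $\Lambda_t$ one computes $d(e^t\alpha)(\partial_t + X_t, v) = -e^t\,dH_t(v)$, which vanishes only if the contact Hamiltonian is constant along each $\Lambda_t$. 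The correct construction (as in the cited references) either adds a correction term to the parametrization or takes the image of the trivial cylinder $\bbR\times\Lambda_-$ under the \emph{time-one} flow of the cut-off lifted Hamiltonian $e^tH_{\sigma(t)}\sigma'(t)$, which shifts the symplectization coordinate rather than fibering over it. With that adjustment, and with the explicit exact local models for the handles written down (rather than asserted), your argument reproduces the standard proofs.
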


\begin{figure}
\labellist
    \large
    \pinlabel $\emptyset$ [b] at 32 25
\endlabellist

    \centering
    \includegraphics[height=2in]{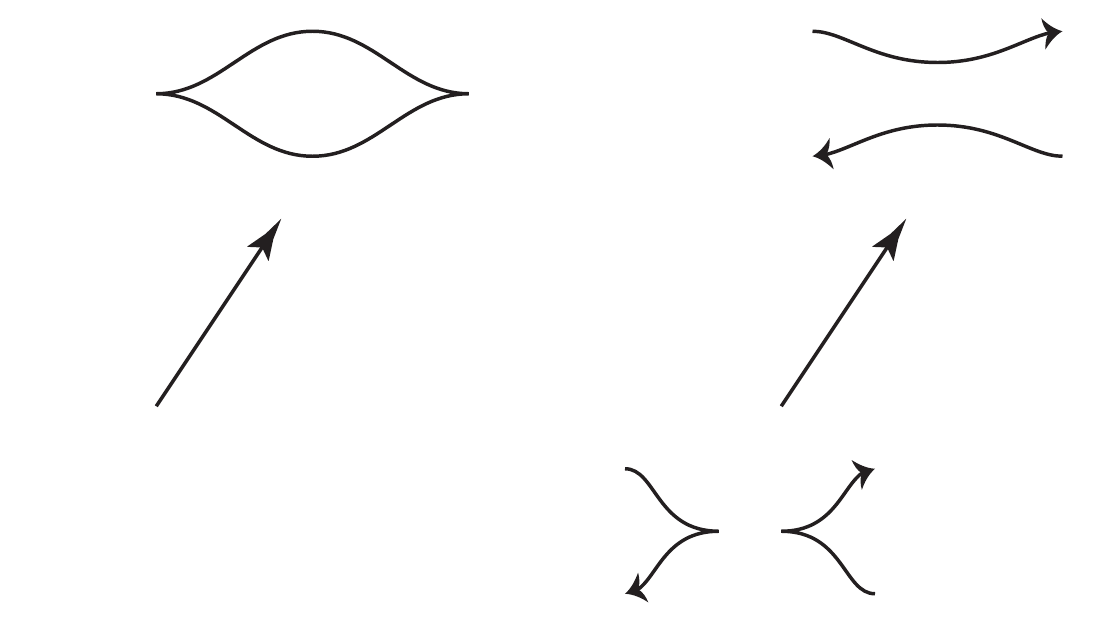}
    \caption{Diagrammatic moves corresponding to the attachment of a $0$-handle (left) and a $1$-handle (right).}
    \label{fig:construct}
\end{figure}

A $1$-handle cobordism which joins two different components in $\Lambda_-$, and hence reduces the number of components by one in $\Lambda_+$, we call a \allbf{$\wedge$-handle}. A $1$-handle which splits a component in $\Lambda_-$, and hence increases the number of components by one in $\Lambda_+$, we call a \allbf{$\vee$-handle}. Since our $1$-handles are orientable, these are the only two possibilities. For any $\vee$-handle $E$, let $\base(E)$ be the component of $\Lambda_-$ on which $E$ acts, and for any $\wedge$-handle $E$, let $\base_1(E)$ and $\base_2(E)$ be the two distinct components of $\Lambda_-$ on which $E$ acts.  This language will become important in Section~\ref{decomp}.

A \allbf{decomposable} Lagrangian cobordism $\mathbf{L}$ is the result of composing a sequence $\Lambda_0 \prec_{E_1} \Lambda_1, \ldots, \Lambda_{k-1} \prec_{E_k} \Lambda_k$ of elementary Lagrangian cobordisms:
\[ \mathbf{L} =   E_1 \odot \cdots \odot E_k.\]

As the constructions in Theorem~\ref{thm:construct} are all local, the same statement and vocabulary may be used to define elementary and decomposable Lagrangian cobordisms between Legendrian $n$-tangles.  Further, note that elementary cobordisms preserve the orientations of $n$-tangles at the boundary.

\section{Satellite construction for Lagrangian cobordisms}
\label{geom}

The goal of this section is to geometrically construct Lagrangian cobordisms between satellites of Legendrian links, which is embodied in the following  proposition.  

\begin{proposition}
\label{prop:main_geom}
Suppose $\Lambda_\pm$ are $r_\pm$-component Legendrian links in $(Y,\alpha)$ and $\Pi_\pm$ are Legendrian $n$-tangles. Given a connected Lagrangian cobordism $\Lambda_-\prec_L\Lambda_+$ and a Lagrangian cobordism $\Pi_-\prec_P\Pi_+$, there exists an orientable Lagrangian cobordism $\Sigma(L,P)$ from the satellite of $\Lambda_-$ by any $r_-$-tuple of tangles $(\Delta^{1+2g_1+k_1}\Pi_-, \Delta^{1+2g_2+k_2}, \ldots, \Delta^{1+2g_{r_-}+k_{r_-}})$ to the satellite of $\Lambda_+$ by any $r_+$-tuple of tangles $(\Delta^{l_1} \Pi_+, \Delta^{l_2}, \ldots, \Delta^{l_{r_+}})$ for any choice of $g_i, k_i, l_i$ so that $\sum g_i = g(L)$, $\sum k_i = r_- -1$, and $\sum l_i = 1$.
\end{proposition}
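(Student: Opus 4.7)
The plan is to construct $\Sigma(L,P)$ by placing $n$ parallel Lagrangian copies of $L$ inside a Weinstein tubular neighborhood of $L$ in the symplectization, and then inserting the tangle cobordism $P$ in a local product chart near one pair of bottom–top boundary components. The extra full twists in the statement compensate for the cohomological obstruction to straightening these parallel copies into a standard $n$-braid over $L$, an obstruction forced by the genus and the multi-component boundary of $L$.

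First, I would invoke the Weinstein tubular neighborhood theorem for the exact Lagrangian $L \subset (\bbR \times Y, d(e^t\alpha))$, obtaining a symplectomorphism from a neighborhood of $L$ onto a neighborhood of the zero section in $T^*L$. The chart can be arranged so that on each cylindrical end it agrees with the standard identification of a Legendrian neighborhood of a component of $\Lambda_\pm$ with $J^1 S^1 = T^*S^1 \times \bbR$. Inside $T^*L$, one realizes $n$ parallel Lagrangian copies of $L$ as graphs of closed $1$-forms $\eta_1, \ldots, \eta_n$ on $L$, chosen so that near each cylindrical end each $\eta_k$ restricts to a multiple of the angular form $d\theta$, producing the standard $n$-copy of $\Lambda_\pm$ at heights $\tfrac{k}{n+1}$.

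The heart of the argument is a period accounting. Stokes' theorem forces a linear relation among the boundary integrals of any closed $1$-form on $L$, so the naive boundary data (no extra twists) is obstructed as soon as $L$ has nonzero genus or $r_- \neq r_+$. These obstructions are absorbed by distributing full twists across the cylindrical ends in any way consistent with the three sum constraints: $2g(L)$ twists account for the genus of $L$ (two per genus handle, corresponding to its two $H_1$-generators), $r_- - 1$ twists correspond to a spanning tree of arcs joining the bottom boundary components, and the remaining $r_- + 1$ twists (one per component on the bottom and one total on top) come from the standard satellite framing convention. The flexibility in the choice of cohomology class for each difference $\eta_j - \eta_k$ on $L$ translates directly into the flexibility of distributing these twists among the components of $\Lambda_\pm$.

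Finally, one modifies the $n$-fold cover by inserting $P$ in a small product neighborhood of a transverse arc in $L$ running from the first bottom circle to the first top circle, replacing the local trivial $n$-tangle bundle with $P$. The result is orientable because each sheet is a graph of a $1$-form over the orientable $L$, and it has the prescribed cylindrical ends by construction. The main obstacle I anticipate will be the cohomological bookkeeping: verifying that the claimed twist distribution can always be realized by a corresponding choice of closed $1$-forms with the right boundary periods, and that the orientation conventions (cf.\ Remark~\ref{clockwise_twist}) produce full twists $\twist$ as stated rather than their inverses.
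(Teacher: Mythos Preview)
Your outline has the right overall architecture --- Weinstein neighborhood, $n$ parallel copies, insert $P$ along a strip --- but the mechanism you propose for the parallel copies does not work, and the obstruction you identify is not the right one.  Graphs of \emph{closed} $1$-forms $\eta_k$ on $L$ are Lagrangian in $T^*L$, but they are exact if and only if $\eta_k$ is exact; a non-exact $\eta_k$ therefore fails Definition~\ref{defn:lagr-cob}.  Moreover, the boundary model is wrong: under the identification $\beta$ of Section~\ref{ssec:tubular}, the $k^{\text{th}}$ strand of the standard $n$-copy over a cylindrical end $\bbR\times S^1$ corresponds to the graph of the \emph{exact} form $d\!\left(\tfrac{k}{n+1}e^t\right)$, not to a multiple of the angular form $d\theta$.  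A graph of $c\,d\theta$ is not cylindrical in the symplectization at all.  So your ``period accounting via Stokes'' is aimed at the wrong constraint: the genuine obstruction is Poincar\'e--Hopf for a \emph{non-vanishing exact} $1$-form $df$ on $L$, which forces the total winding of $\nabla f$ along $\partial L$ to equal $\chi(L)$ and is precisely what produces the twist counts in the statement.

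The paper's proof therefore replaces your $\eta_k$ by the exact forms $\tfrac{i}{n}\,df$ for a single function $f:L\to\bbR$ that is (i) non-singular, so that the sheets are disjoint, and (ii) ``globally cylindrical'' (Definition~\ref{defn:glob-cyl}), meaning $f=e^tg(s)+C$ near each end with a \emph{single} additive constant $C$ and with $f=e^u+C$ along an embedded strip from bottom to top.  The global constant is what guarantees the resulting multi-sheeted cobordism is \emph{leveled} (Lemma~\ref{lem:disconnected_ends}), a point your sketch does not address and which is essential for the output to be a Lagrangian cobordism in the required sense.  The twists then appear as the zeros of $g$ on each boundary circle, and the explicit comb construction in Section~\ref{ssec:push-off} shows how to realize any distribution satisfying the three sum constraints.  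Finally, the strip where $f=e^u+C$ is exactly where $P$ is inserted; your transverse-arc idea is correct in spirit but needs $f$ to be standard along that arc, which is why global cylindricality is built into the construction from the start.
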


The first claim of Theorem~\ref{thm:geom} follows when the Legendrians at the ends are connected. Proposition~\ref{prop:main_geom} may also be applied to construct the possibly non-orientable cobordisms in the second claim of Theorem~\ref{thm:geom}.

\begin{proposition}
\label{main_geom_nonorientable}
Suppose that $\Lambda_\pm$ are Legendrian knots in $(Y,\alpha)$ and $\Pi_\pm$ are Legendrian $n$-tangles.  Given cobordisms $\Lambda_-\prec_L\Lambda_+$ and $\Pi_-\prec_P\Pi_+$, there exists a (possibly non-orientable) cobordism \[\Sigma(\Lambda_-, \Delta \Pi_-)\prec_{\tilde{\Sigma}(L,P)} \Sigma(\Lambda_+,\Delta \Pi_+).\]
\end{proposition}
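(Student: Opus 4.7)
The plan is to reduce to Proposition~\ref{prop:main_geom} by prepending a possibly non-orientable Lagrangian cobordism that installs additional full twists on the lower end. Since $\Lambda_\pm$ are knots (so $r_\pm = 1$), Proposition~\ref{prop:main_geom} produces an orientable cobordism
\[
\Sigma(\Lambda_-, \Delta^{2g(L)+1}\Pi_-) \prec_{\Sigma(L,P)} \Sigma(\Lambda_+, \Delta\Pi_+).
\]
If we can exhibit a possibly non-orientable Lagrangian cobordism
\[
\Sigma(\Lambda_-, \Delta\Pi_-) \prec \Sigma(\Lambda_-, \Delta^{2g(L)+1}\Pi_-),
\]
then stacking it below $\Sigma(L,P)$ yields the desired $\tilde{\Sigma}(L,P)$; leveledness of the composition is automatic because both $\Lambda_-$ and $\Lambda_+$ are connected (Remark~\ref{thetaisconstant}).

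The core technical ingredient is a local lemma: for any Legendrian knot $\Lambda$ and any Legendrian $n$-tangle $\Pi$, there is a possibly non-orientable exact Lagrangian cobordism from $\Sigma(\Lambda, \Pi)$ to $\Sigma(\Lambda, \Delta^2 \Pi)$. To construct this, I would choose a small arc $\gamma \subset \Lambda$ disjoint from the region where $\Pi$ is inserted, and work inside the standard contact neighborhood of $\gamma$, where the satellite restricts to $n$ parallel copies of $\gamma$. In the symplectization, I perturb the trivial cylinder on these $n$ strands so that it develops a controlled family of transverse Lagrangian self-tangencies, then resolve each tangency by a non-orientable Lagrangian surgery in the sense of \cite{rizell:surgery}. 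With the correct combinatorial arrangement of resolution sites, the effect on the upper slice is to insert exactly one full twist squared on $n$ strands, passing from $\Pi$ to $\Delta^2 \Pi$.

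Iterating this lemma $g(L)$ times produces the desired cobordism from $\Sigma(\Lambda_-, \Delta\Pi_-)$ to $\Sigma(\Lambda_-, \Delta^{2g(L)+1}\Pi_-)$, and stacking with $\Sigma(L,P)$ concludes the proof. The main obstacle is verifying that the combinatorics of the non-orientable surgeries reproduce the braid $\Delta^2$ rather than some other $n$-tangle of the same Euler characteristic, and that the monodromy around each crosscap does not obstruct exactness of the ambient Lagrangian. A secondary difficulty is checking that the resulting surface is properly embedded and cylindrical near its ends in the sense of Definition~\ref{defn:lagr-cob}, so that the final concatenation is a bona fide Lagrangian cobordism.
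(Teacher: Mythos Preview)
Your high-level strategy---apply Proposition~\ref{prop:main_geom} to get the orientable cobordism from $\Sigma(\Lambda_-,\Delta^{2g(L)+1}\Pi_-)$ and then prepend something possibly non-orientable to absorb the extra $2g(L)$ twists---is sound, and the paper's argument follows the same outline. The gap is in your ``local lemma.'' You propose to manufacture the cobordism $\Sigma(\Lambda_-,\Delta\Pi_-)\prec\Sigma(\Lambda_-,\Delta^{2g(L)+1}\Pi_-)$ by perturbing the trivial cylinder to create Lagrangian self-tangencies and resolving them via non-orientable surgery, but you do not carry this out, and you yourself flag the two real obstructions (that the resolved link is actually $\Delta^2$ on $n$ strands, and that exactness survives). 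As written, this is a plan rather than a proof.

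The paper sidesteps this entirely by a trick you may find instructive: instead of adding twists directly on $\Lambda_-$, it first pre-composes $L$ with a $\wedge$-handle $L_\wedge$ from $\Lambda_-\cup\Upsilon$ to $\Lambda_-$, where $\Upsilon$ is the maximal unknot. Now the lower end of $L_\wedge\odot L$ has two components, and the freedom in Proposition~\ref{prop:main_geom} to distribute the $g_i$ and $k_i$ lets one park \emph{all} the excess twists on the $\Upsilon$ component, leaving exactly one twist on $\Lambda_-$. The result is a cobordism from $\Sigma(\Lambda_-,\Delta\Pi_-)\cup\Sigma(\Upsilon,\Delta^{m})$ to $\Sigma(\Lambda_+,\Delta\Pi_+)$ for some $m\geq 1$. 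One then caps off $\Sigma(\Upsilon,\Delta^{m})$ with a (possibly non-orientable) Lagrangian filling, whose existence is the content of Lemma~\ref{lem:lifted-0h}. This avoids any surgery on the satellite of $\Lambda_-$ and reduces the non-orientable step to filling a concrete link. If you want to salvage your approach, the cleanest route to your local lemma is essentially this same maneuver applied to the trivial cobordism on $\Lambda_-$ (compare Corollary~\ref{cor:remove-twists}), not ad hoc Lagrangian surgery.
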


The proof of Proposition \ref{prop:main_geom} requires four main steps, which shall be worked out in the subsections below:
\begin{enumerate}
\item Find an appropriate tubular neighborhood of $L$ that is exact symplectomorphic to $T^*L$;
\item Develop conditions on functions $f: L \to \bbR$ so that the graph of $df$ in $T^*L$ yields a leveled Lagrangian cobordism;
\item Construct a non-singular function $f$ that satisfies properties necessary to produce an $n$-copy of $L$; and 
\item Sew in the cobordism $P$.
\end{enumerate}

\subsection{Tubular neighborhoods of Lagrangian cobordisms}
\label{ssec:tubular}

The classical Weinstein Neighborhood Theorem states that a compact Lagrangian $L$ has a tubular neighborhood that is symplectomorphic to a neighborhood of the zero section in $T^*L$.  We need to extend the theorem to (non-compact) Lagrangian \emph{cobordisms} and \emph{exact} symplectomorphisms. Recall that a symplectomorphism $\phi:(M_1,d\alpha_1)\to (M_2,d\alpha_2)$ is exact when $\phi^*(\alpha_2)=\alpha_1+d\theta$ for some function $\theta$ on $M_1$. Though such an extension of the Weinstein Neighborhood Theorem has been assumed in the literature in, for example, \cite{c-dr-g-g-cobordism, josh-lisa:rel-gr-width}, we will discuss a proof of this folklore theorem here. Our discussion has the added benefit of introducing notation that will be useful in later sections. We begin by examining the exactness of the symplectic embedding in the Weinstein Neighborhood Theorem, which will rely on a version of Moser's Method.

\begin{lemma}[Moser's Method for exact symplectic forms, cf. Theorem 6.8 of \cite{ce:weinstein}] \label{lem:exactmoser}
Given a family of exact symplectic forms $\omega_t = d\alpha_t$ on a  manifold $V$ without boundary that coincide outside a compact set, there exists a family of diffeomorphisms $\phi_t: V \to V$  with compact support such that $\phi_t^*\alpha_t=\alpha_0+d\theta_t$.  In particular, $\phi_1: (V, d\alpha_0) \to (V, d\alpha_1)$ is an exact symplectomorphism.
\end{lemma}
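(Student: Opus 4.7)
The plan is to run the classical Moser trick, keeping track of exactness and of compact support. Write $\dot{\alpha}_t := \tfrac{d}{dt}\alpha_t$, which is compactly supported by assumption, and search for a time-dependent vector field $X_t$ whose flow $\phi_t$ is defined for all $t \in [0,1]$. Differentiating the desired identity $\phi_t^*\alpha_t = \alpha_0 + d\theta_t$ in $t$ and applying Cartan's magic formula gives
\[
\phi_t^*\bigl(\mathcal{L}_{X_t}\alpha_t + \dot{\alpha}_t\bigr)
= \phi_t^*\bigl(d(\iota_{X_t}\alpha_t) + \iota_{X_t}\omega_t + \dot{\alpha}_t\bigr) = d\dot{\theta}_t.
\]
It therefore suffices to impose the pointwise equation $\iota_{X_t}\omega_t = -\dot{\alpha}_t$, in which case the parenthesized expression above is exact on the nose, equal to $d(\iota_{X_t}\alpha_t)$.

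Since each $\omega_t$ is a symplectic form, the bundle map $X \mapsto \iota_X \omega_t$ is a fiberwise isomorphism, so the equation $\iota_{X_t}\omega_t = -\dot{\alpha}_t$ has a unique smooth solution $X_t$. Because $\dot{\alpha}_t$ is supported in a common compact set $K \subset V$, so is $X_t$. Off $K$ the vector field $X_t$ vanishes, so its flow is defined for all time and remains the identity outside $K$; in particular $\phi_t: V \to V$ is a well-defined, compactly supported diffeomorphism for every $t \in [0,1]$, even though $V$ itself may be non-compact.

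Now integrate: since $\tfrac{d}{dt}(\phi_t^*\alpha_t) = \phi_t^*\,d(\iota_{X_t}\alpha_t) = d\bigl(\phi_t^*(\iota_{X_t}\alpha_t)\bigr)$, we may set
\[
\theta_t := \int_0^t \phi_s^*\bigl(\iota_{X_s}\alpha_s\bigr)\, ds,
\]
which is a smooth function on $V$ satisfying $\phi_t^*\alpha_t = \alpha_0 + d\theta_t$ and reducing to zero outside a compact set. Specializing to $t=1$ gives the claimed exact symplectomorphism $\phi_1 : (V, d\alpha_0) \to (V, d\alpha_1)$.

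The only genuine obstacle is the non-compactness of $V$, which could in principle prevent the vector field $X_t$ from integrating to a diffeomorphism for all $t \in [0,1]$; this is precisely what the hypothesis that the $\alpha_t$ agree outside a compact set neutralizes, by forcing $X_t$ to have compact support. All remaining steps are routine verifications. Note that although the lemma is stated for manifolds without boundary, the same argument applies on symplectizations $\mathbb{R}\times Y$ since the relevant vector fields have compact support, which is how the lemma will be used in the subsequent tubular neighborhood construction for Lagrangian cobordisms.
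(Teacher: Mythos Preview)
Your argument is correct: solving $\iota_{X_t}\omega_t = -\dot{\alpha}_t$ gives a compactly supported vector field whose flow exists for all time, and the integral formula for $\theta_t$ verifies exactness. The paper does not supply its own proof of this lemma; it simply states the result with a reference to Theorem~6.8 of Cieliebak--Eliashberg, so there is nothing to compare against beyond noting that your write-up is the standard Moser argument one would find (or reconstruct) from that citation.
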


In the case of a closed exact Lagrangian, the folklore theorem that the symplectic embedding can be made exact can be proven by applying the version of Moser's Method in \cite[Lemma 11.2]{ce:weinstein} to the proof of the Weinstein Neighborhood Theorem in \cite[\S6.4]{ce:weinstein}.  In particular, if $L\subset(M,\omega=d\alpha)$ is a compact, exact Lagrangian submanifold, then there exists an exact symplectic embedding $\phi: U \subset T^*L\hookrightarrow M$, defined on a neighborhood $U$ of the zero section.

To understand neighborhoods of Lagrangian cobordisms, we first turn our attention to the cylindrical ends $\bbR \times \Lambda$ in the symplectization $(\bbR \times Y, d(e^t\alpha))$.  A neighborhood of a Legendrian $\Lambda$ in $(Y, \alpha)$ is naturally modelled on $J^1\Lambda$, while a neighborhood of $L=\bbR \times \Lambda$ is naturally modelled on $T^*L$. There are a variety of ways to identify the symplectization of a $1$-jet space with the cotangent bundle of a cylindrical end (see, for example, \cite{c-dr-g-g-cobordism, josh-lisa:obstr}); we use an identification that has the feature of yielding a morphism of symplectic bundles over $L$. On one hand, let $J^1\Lambda = \bbR \times T^*\Lambda$ have local coordinates $(v,s,u)$, and let the symplectization coordinate in $\bbR \times J^1 \Lambda$ be denoted by $t$; the primitive of the symplectic form is $e^t \alpha = e^t(dv-u\,ds)$.  On the other hand, consider the exact symplectic space $T^*(\bbR \times \Lambda) = T^*\bbR \times T^*\Lambda$ with coordinates $(t,v,s,u)$ whose primitive is the tautological $1$-form $-\lambda = -(v\,dt + u\, ds)$.  We now define $\beta: \bbR \times J^1 \Lambda \to T^*(\bbR \times \Lambda)$ by
\[ \beta(t,v,s,u) = (t, e^tv, s, e^t u).\]
It is straightforward to check that $\beta$ is an exact symplectomorphism with $\beta^*(-\lambda) = e^t \alpha - d(e^t v).$

We can now impose a natural compatibility condition between a Lagrangian tubular neighborhood of $\bbR \times \Lambda$ and a Legendrian tubular neighborhood of $\Lambda$.  If $\phi: U \subset T^*(\bbR \times \Lambda) \hookrightarrow \bbR \times Y$ is an exact symplectic embedding of a neighborhood of the zero section and $\psi: V \subset J^1\Lambda \hookrightarrow Y$ is a contact embedding of a neighborhood of the zero section, we say that $\phi$ and $\psi$ are \allbf{compatible} if $id \times \psi = \phi \circ \beta$, perhaps after shrinking domains.

With the notion of compatible embeddings of cylindrical cobordisms in hand, we are ready to state the Weinstein Neighborhood Theorem for Lagrangian cobordisms.

\begin{lemma}\label{tubnbhd} 
Given an exact Lagrangian cobordism $\Lambda_- \prec_{L}\Lambda_+$ and Legendrian tubular neighborhoods of $\Lambda_\pm$ defined by embeddings $\psi_\pm$, there exists an exact symplectic embedding $\phi: U \subset T^* L \hookrightarrow \R\times Y$ of a neighborhood of the zero section that is compatible with $\psi_\pm$
in the cylindrical ends.
\end{lemma}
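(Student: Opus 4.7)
The plan is to mimic the standard Weinstein neighborhood theorem proof — produce a smooth tubular embedding of the zero section, then correct it via Moser's method — but carry out both steps so the result is fixed to match the prescribed Legendrian embeddings on the cylindrical ends. The map $\beta$ from the preceding discussion translates the Legendrian neighborhoods $\psi_\pm$ of $\Lambda_\pm$ into exact Lagrangian neighborhoods of the ends $\bbR \times \Lambda_\pm \subset L$, while Lemma~\ref{lem:exactmoser} handles the correction over the compact middle.

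First, on each cylindrical end the compatibility condition forces $\phi_\pm := (id \times \psi_\pm) \circ \beta^{-1}$, defined on a neighborhood of the zero section of $T^*(\bbR \times \Lambda_\pm) \subset T^*L$. These are exact symplectic embeddings: $id \times \psi_\pm$ pulls $e^t\alpha$ back to $e^t$ times the canonical $1$-form on $J^1\Lambda_\pm$, and inverting the relation $\beta^*(-\lambda) = e^t\alpha - d(e^tv)$ yields $\phi_\pm^*(e^t\alpha) = -\lambda + dV$, where $V$ is the fiber coordinate dual to $t$. Extend $\phi_\pm$ across the middle by a standard smooth tubular neighborhood construction to a diffeomorphism $\phi_0: U \hookrightarrow \bbR\times Y$ from an open neighborhood $U$ of the zero section of $T^*L$. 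One can arrange that, at each point of $L$, the differential of $\phi_0$ sends the cotangent fiber to a symplectic complement of $TL$, so that with $\alpha_0 := \phi_0^*(e^t\alpha)$ and $\alpha_1 := -\lambda|_U$ one has $d\alpha_0 = d\alpha_1$ along $L$; by the computation above, the two symplectic forms also agree identically on the cylindrical ends of $U$.

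Now apply Moser's method. After possibly shrinking $U$, the linear interpolation $\omega_t := (1-t)\,d\alpha_0 + t\,d\alpha_1$ is a family of symplectic forms on $U$ that agree outside a compact subset of $U$ (namely, outside the middle of the cobordism). Lemma~\ref{lem:exactmoser} then produces a compactly-supported diffeomorphism $\chi$ of $U$ with $\chi^*\alpha_1 = \alpha_0 + d\theta$, equivalently $(\chi^{-1})^*\alpha_0 = \alpha_1 - d\tilde\theta$. The composition $\phi := \phi_0 \circ \chi^{-1}$ then satisfies $\phi^*(e^t\alpha) = -\lambda - d\tilde\theta$, proving exactness. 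Compact support of $\chi$ ensures $\phi = \phi_\pm$ on the cylindrical ends, which is the desired compatibility.

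The main obstacle is the non-compactness of $L$: both the symplectic form comparison and the subsequent isotopy must be strictly trivial on the ends, not merely small, so that the resulting embedding matches the $\psi_\pm$ on the nose. The tautological identification $\beta$ (which handles the ends exactly) and the compactly-supported form of Moser's method in Lemma~\ref{lem:exactmoser} are precisely the tools that make this work. A minor technical point is that $U$ must have uniform thickness along each cylindrical end; this is automatic, since the thickness there is prescribed by the Legendrian tubular neighborhoods of the compact Legendrians $\Lambda_\pm$.
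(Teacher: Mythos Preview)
Your proof is correct and follows essentially the same route as the paper: define $\phi_\pm = (id \times \psi_\pm)\circ\beta^{-1}$ on the ends, extend smoothly over the compact middle, and then apply the compactly-supported exact Moser lemma to correct the extension to an exact symplectomorphism without disturbing the ends. You are in fact a bit more explicit than the paper in verifying the exactness of $\phi_\pm$ via the relation $\beta^*(-\lambda)=e^t\alpha-d(e^tv)$ and in arranging that $d\alpha_0=d\alpha_1$ along $L$ so the linear interpolation stays symplectic; the paper instead gestures at the use of exponential maps of compatible metrics for the extension, but the content is the same.
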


\begin{proof}
On the  cylindrical ends, the embeddings $\psi_\pm$ define compatible exact symplectic embeddings of the zero sections of the ends of $L$ via 
\[\phi_\pm = (id \times \psi_\pm)\circ \beta^{-1}.\]
We next extend $\phi_\pm$ to an embedding $\tilde{\phi}: U \subset T^*L \to \bbR \times Y$ of a neighborhood of the zero section. We could do so, for example, by realizing $\phi_\pm$ as exponential maps of metrics $g_\pm$ compatible with the symplectic form, then patching $g_\pm$ together with a compatible metric on the remaining part of the symplectization to get a compatible metric on all of $\bbR \times Y$, and finally using the exponential map along $L$.

Since $\tilde{\phi} $ was already an exact symplectomorphism on the cylindrical ends, we can now use (the exact version of) Moser's method as in Lemma~\ref{lem:exactmoser} to modify $\tilde\phi$ along a compact subset of $L$ to an exact symplectic embedding $\phi:T^*L\hookrightarrow \R\times Y$, compatible with $\psi_\pm$ at the ends.
\end{proof}

\subsection{Deformations of Lagrangian cobordisms}

The next step in the construction in Proposition~\ref{prop:main_geom} is to create deformations of Lagrangian cobordisms $\Lambda_- \prec_L \Lambda_+$ using small Hamiltonian diffeomorphisms inside $T^*L$.  More specifically, consider a smooth function $f: L \to \bbR$; the graph $\Gamma(df)$ is an exact Lagrangian in $T^*L$.  If $\Gamma(df)$ lies in the domain of an exact Lagrangian tubular neighborhood, it is straightforward to check that its image $L^f$ in $\bbR \times Y$ is exact Lagrangian if and only if $L$ is.  

The remaining step to ensure that $L^f$ is a leveled Lagrangian cobordism is to control the behavior of $f$ at the ends.  In this subsection, we analyze how various behaviors of $f$ at the upper and lower ends imply properties of $L^f$.

\begin{definition}
Suppose $L = I \times \Lambda$ is a connected and cylindrical Lagrangian submanifold of the symplectization $\bbR \times Y$ for an interval $I \subset \bbR$.  Let $L$ be parametrized by coordinates $(t,s) \in I \times \Lambda$. A function $f: L\to \R$ is \textbf{cylindrical} if it is of the form $f(t,s)=e^tg(s)+C$. 
\end{definition}

When $L$ is cylindrical but disconnected, we can ask that $f$ be cylindrical on each connected component. Notice that the additive constant $C$ might then be different on different connected components, thus making the function \emph{unleveled}. To avoid confusion, we provide the following terminology. 

\begin{definition}
For a Lagrangian cobordism $L$, a function $f: L\to \R$ has a \textbf{cylindrical upper (resp. lower) end} if it is cylindrical on the upper (resp. lower) end of $L$. If, moreover, the additive constant $C$ is the same on all connected components of the end, we say that the function has a \textbf{leveled cylindrical upper (resp. lower) end}.
\end{definition}

\begin{definition}\label{defn:glob-cyl}
A function $f$ is \textbf{globally cylindrical} when it has leveled cylindrical upper and lower ends and there exists an embedded strip $\Phi: \bbR \times (-\epsilon,\epsilon) \hookrightarrow L$ such that 
\begin{enumerate}
\item The embedding is of the form $\Phi(u,v) = (u,\phi_{\pm}(v))$ outside of $[T_-, T_+] \times (-\epsilon,\epsilon)$ and
\item  Along the strip, we have $f \circ \Phi (u,v) = e^u+C$. 
\end{enumerate}
\end{definition}

\begin{remark}
When $f$ is globally cylindrical, there is a unique additive constant $C$ along all components of both the lower and upper ends.
\end{remark}

\begin{example}
If $f: L_{[T_-,T_+]} \to [b_-, b_+]$ is a Morse function --- that is, $f$ has non-degenerate critical points on the interior and $f^{-1}(b_\pm) = L_{\{T_\pm\}}$ (cf.\ \cite{milnor:h-cob}) --- then, up to a perturbation that does not change the critical points of $f$, we may assume that $f(s,t) = k_\pm e^t$ in neighborhoods of $T_\pm$. Therefore we can extend $f$ to a function $f: L\to\R$ that has leveled cylindrical upper and lower ends, with $C_+=C_-=0$.

If we start with a Morse function $f: L_{[T_-,T_+]} \to [e^{T_-}, e^{T_+}]$, we have that $k_\pm=1$. Consider a flowline $\gamma$ of $df$ from $\Lambda_-$ to $\Lambda_+$. It is easy to find a neighborhood of $\gamma$ that has the form $\Phi=\gamma\times (-\epsilon,\epsilon)$ and we can assume without loss of generality that $f=e^t$ in local coordinates. In other words, $f$ is globally cylindrical.
\end{example}

The following proposition clarifies the usefulness of our definitions.

\begin{proposition} \label{prop:displaced-cobordism}
Given a leveled Lagrangian cobordism $\Lambda_- \prec_L \Lambda_+$, 
\begin{enumerate}
\item $L^f$ is a Lagrangian cobordism if and only if $f:L\to\R$ has cylindrical upper and lower ends, and $L^f$ is leveled if and only if $f$ is; and
\item 
$L\cup L^f$ is a leveled Lagrangian cobordism from $\Lambda_-\cup \Lambda^f_-$ to $\Lambda_+ \cup \Lambda^f_+$ if and only if $f:L\to\R$ is nonsingular and has leveled cylindrical ends with $C_+=C_-$.
\end{enumerate}
\end{proposition}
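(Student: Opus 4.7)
The plan is to reduce both parts to direct computations in the Weinstein neighborhood $\phi: T^*L \hookrightarrow \bbR \times Y$ of Lemma~\ref{tubnbhd}, using the compatibility $\phi = (id \times \psi_\pm) \circ \beta^{-1}$ on each cylindrical end. Writing the end as $I \times \Lambda$ with coordinates $(t,s)$ and letting $(V,U)$ denote the dual cotangent-fiber coordinates on $T^*L$, the graph $\Gamma(df) = \{(t, s, \partial_t f, \partial_s f)\} \subset T^*L$ pulls back through $\beta^{-1}$ to the subset $\{(t, e^{-t}\partial_t f, s, e^{-t}\partial_s f)\}$ of $\bbR \times J^1\Lambda$. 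Since $\Gamma(df)$ is an exact Lagrangian in $T^*L$ and $\phi$ is an exact symplectomorphism, $L^f$ is automatically an exact Lagrangian submanifold of $\bbR\times Y$, so the content of the proposition is about the structure at the ends.

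For the first half of part (1), $L^f$ is cylindrical at a given end precisely when $e^{-t}\partial_t f$ and $e^{-t}\partial_s f$ are both independent of $t$. A short ODE argument --- $\partial_t(e^{-t}\partial_t f) = 0$ forces $f = e^t g(s) + k(s)$, and then $\partial_t(e^{-t}\partial_s f) = 0$ forces $k$ to be a constant --- identifies this with $f(t,s) = e^t g(s) + C$, which is exactly the definition of cylindricity for $f$. For the leveling half, $\beta^*(-\lambda) = e^t\alpha - d(e^t V)$ yields $\phi^*(e^t \alpha) = -\lambda + d\tilde\theta$ with $\tilde\theta = V + a_\pm$ on the two cylindrical ends, where $a_\pm$ are the values of a chosen leveled primitive $F_L$ for $L$ at those ends. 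Pulling back to $\Gamma(df)$ and using $(-\lambda)|_{\Gamma(df)} = -df$, a natural primitive on $L^f$ at each end is $\partial_t f - f + a_\pm = a_\pm - C_\pm$. Constancy of this across components of an end is equivalent to constancy of $C_\pm$, i.e., to $f$ being leveled there.

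For part (2), the union $L \cup L^f$ is an embedded submanifold if and only if $L \cap L^f = \emptyset$, which holds if and only if $\Gamma(df)$ avoids the zero section of $T^*L$, i.e., $f$ is nonsingular; exactness of the union is then automatic. A choice of primitive of $e^t\alpha$ on $L \cup L^f$ consists of $F_L$ on $L$ (leveled with values $a_\pm$) together with an adjustment constant $K$ added to the natural primitive on $L^f$, giving values $a_\pm - C_\pm + K$ on the ends. The leveled condition for $L \cup L^f$ then requires $a_\pm = a_\pm - C_\pm + K$ on each end, which forces $K = C_+ = C_-$. The main bookkeeping challenge is tracking the additive constants for the various primitives as one passes between $T^*L$, $\bbR \times J^1\Lambda$, and $\bbR \times Y$; once the dictionary of Section~\ref{ssec:tubular} is in hand, the rest is a direct calculation.
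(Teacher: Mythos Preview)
Your argument is correct and, for the cylindricity characterization in part~(1), essentially identical to the paper's: both parametrize $\beta^{-1}(\Gamma(df))$ as $(t,s,e^{-t}\partial_t f, e^{-t}\partial_s f)$ and solve the resulting ODEs $\partial_{tt}f=\partial_t f$, $\partial_{ts}f=\partial_s f$ to obtain $f=e^t g(s)+C$. The treatment of the embedding condition in part~(2) via nonsingularity of $f$ is likewise the same.

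The genuine difference is in the leveling computation. The paper proves Lemma~\ref{lem:disconnected_ends} by viewing $L^f$ as the time-$1$ image of $L$ under the Hamiltonian flow of $f$ and invoking the identity $\phi_1^*(e^t\alpha)-e^t\alpha = d\!\int_0^1(\iota_X e^t\alpha - f)\circ\phi_\tau\,d\tau$ from \cite{mcduff-salamon}; a local computation then gives $\iota_X e^t\alpha - f = -C_\zeta$ on each end, whence $\theta^f\circ\phi-\theta = C - C_\zeta$. You instead compute the primitive directly on $\Gamma(df)\subset T^*L$: using $\phi^*(e^t\alpha)=-\lambda+d\tilde\theta$ with $\tilde\theta=V+a_\pm$ on the ends and $(-\lambda)|_{\Gamma(df)}=-df$, you read off the primitive $a_\pm-C_\pm$ on $L^f$ at once. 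Your route is more elementary---it stays entirely within the Weinstein-neighborhood dictionary already set up in Section~\ref{ssec:tubular} and avoids the external Hamiltonian-isotopy formula---while the paper's route is more intrinsic to the symplectization and makes the dependence on the Hamiltonian displacement explicit. The paper in fact remarks (after Lemma~\ref{lem:disconnected_ends}) that alternative, more geometric arguments exist; yours is another such alternative. One small point worth making explicit in your write-up: the identification $\tilde\theta|_{0_L}=F_L$ (up to a global constant) is what pins down the additive constants as $a_\pm$, and when $\Lambda_\pm$ are disconnected you are implicitly using that ``leveled cylindrical'' already forces a single $C_\pm$ per end.
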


We first prove that $f$ is cylindrical at the ends if and only if $L^f$ is. We will then verify that the cobordism we obtain is leveled, for which we need the extra assumption that the function is leveled cylindrical. These facts are proven in Lemmas~\ref{lem:cylindrical_conditions} and Lemma~\ref{lem:disconnected_ends}, respectively. Note that the requirement that $f$ be nonsingular in part (2) of the proposition above guarantees that $L \cup L^f$ is embedded.

\begin{lemma}\label{lem:cylindrical_conditions}
For a cylindrical Lagrangian $L = I \times \Lambda$, the following are equivalent for a function $f: L \to \bbR$:
\begin{enumerate}
\item The function $f$ is cylindrical; and
\item The Lagrangian $L^f$ is cylindrical.
\end{enumerate}
 Moreover, if these conditions hold, the  Legendrian always takes the form $\Lambda^f = J^1(g)$ in $J^1 \Lambda$.
\end{lemma}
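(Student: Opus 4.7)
The plan is to use the exact symplectomorphism $\beta: \bbR \times J^1 \Lambda \to T^*(\bbR \times \Lambda)$ together with the compatibility condition on tubular neighborhoods established in Section~\ref{ssec:tubular} to transport the cylindricality question into $\bbR \times J^1\Lambda$, where cylindricality becomes the literal statement of $t$-invariance. Since $L = I \times \Lambda$ is itself cylindrical, the tubular neighborhood embedding $\phi: T^*L \hookrightarrow \bbR \times Y$ factors on the relevant region as $\phi = (id \times \psi) \circ \beta^{-1}$. Consequently, $L^f = \phi(\Gamma(df))$ is cylindrical in $\bbR \times Y$ if and only if $\beta^{-1}(\Gamma(df)) \subset \bbR \times J^1\Lambda$ is of the form $\bbR \times \Lambda^f$ for some Legendrian $\Lambda^f \subset J^1\Lambda$.

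Next I would run the coordinate computation. In canonical coordinates $(t,v,s,u)$ on $T^*L = T^*I \times T^*\Lambda$, the graph $\Gamma(df)$ is parametrized by $(t,s) \mapsto (t,\partial_t f,s,\partial_s f)$. Applying $\beta^{-1}(t,v,s,u) = (t, e^{-t}v, s, e^{-t}u)$ gives the parametrization
\[ (t,s) \mapsto \bigl(t,\ e^{-t}\partial_t f(t,s),\ s,\ e^{-t}\partial_s f(t,s)\bigr) \in \bbR \times J^1\Lambda. \]
By the factorization above, this image is of the form $\bbR \times \Lambda^f$ if and only if both $e^{-t}\partial_t f$ and $e^{-t}\partial_s f$ are independent of $t$.

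I would then finish with a direct ODE argument. Setting $e^{-t}\partial_t f = g(s)$ and integrating in $t$ yields $f(t,s) = e^t g(s) + h(s)$ for some $h$ on $\Lambda$; substituting back, $e^{-t}\partial_s f = g'(s) + e^{-t} h'(s)$, and independence of $t$ forces $h'(s) \equiv 0$, i.e., $h$ is a constant $C$. The reverse implication is an immediate differentiation. Hence $f$ is cylindrical if and only if $L^f$ is, and in that case $\Lambda^f = \{(g(s),s,g'(s)) : s \in \Lambda\}$, which is precisely $J^1(g)$, giving the moreover clause. There is no real obstacle here; the care required is purely in the bookkeeping of coordinates and in invoking the compatibility of the Weinstein neighborhood with the contact tubular neighborhood at the start.
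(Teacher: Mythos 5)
Your proposal is correct and follows essentially the same route as the paper: both use the symplectomorphism $\beta$ to identify $L^f$-cylindricality with $t$-invariance of the parametrization $(t,s,e^{-t}\partial_t f, e^{-t}\partial_s f)$ in $\bbR \times J^1\Lambda$, and then solve the resulting equations $\partial_{tt}f = \partial_t f$ and $\partial_{ts}f = \partial_s f$ to obtain $f = e^t g(s) + C$. The only cosmetic difference is the order in which you integrate the two conditions; the moreover clause falls out identically.
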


\begin{proof}
The proof that the first condition implies the second follows from a straightforward calculation using the symplectomorphism $\beta$ defined in the previous section, as does the last statement of the lemma.

For the reverse direction, notice that we may parametrize $\beta^{-1}(\Gamma(df))$ in $\bbR \times J^1 \Lambda$ by $\left(t,s,e^{-t}\partial_t f, e^{-t} \partial_s f\right).$ The assumption that $L^f$ is cylindrical implies that a displacement in the symplectization ($t$) direction preserves $L^f$.  In particular, we have \[\partial_t \left((t,s,e^{-t}\partial_t f,e^{-t}\partial_sf)\right)=(1,0,0,0).\]
The last two components of this equation imply that $\partial_{tt} f =\partial_t f$ and $\partial_{ts} f=\partial_s f $, respectively. Solving for $f$ using the first and then the second equation yields $f=e^tg(s)+C$ for some $g(s)$ and $C$ on each connected component, as desired.
\end{proof}

\begin{lemma}\label{lem:disconnected_ends} 
Let $L$ be a leveled Lagrangian cobordism and let $f: L \to \bbR$ be a cylindrical function.  
\begin{enumerate}
\item The cobordism $L^f$ is leveled if and only if $f$ has leveled cylindrical lower and upper ends.  
\item Moreover, the cobordism $L \cup L^f$ is leveled if and only if the additive constant for $f$ is the same at the upper and lower ends.
\end{enumerate}
\end{lemma}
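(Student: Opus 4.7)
The plan is to compute the primitive of $(e^t\alpha)|_{L^f}$ explicitly, using the exact Weinstein tubular neighborhood $\phi\colon T^*L \hookrightarrow \bbR \times Y$ from Lemma~\ref{tubnbhd}. The natural diffeomorphism $\psi\colon L \to L^f$ sending $x \mapsto \phi(x, df_x)$ will pull back $(e^t\alpha)|_{L^f}$ to $d\bigl(-f + \tilde\theta \circ (\mathrm{id}, df)\bigr)$ on $L$, where $\tilde\theta$ is the function on $T^*L$ determined by $\phi^*(e^t\alpha) = -\lambda + d\tilde\theta$. Hence any primitive $F^f$ of $(e^t\alpha)|_{L^f}$, transported back to $L$ via $\psi$, takes the form $F^f \circ \psi = -f(x) + \tilde\theta(x, df_x) + c^f$ for some additive constant $c^f$ per connected component.

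The decisive calculation takes place on the cylindrical ends. The compatibility condition in Lemma~\ref{tubnbhd} forces $\tilde\theta = V$ in the coordinates $(t, V, s, U)$ on $T^*(\bbR \times \Lambda)$, which follows directly from $(\beta^{-1})^*(e^t\alpha) = -\lambda + dV$. Writing $f = e^t g(s) + C$ on a given end component, I compute $\tilde\theta(x, df_x) = V(df_x) = \partial_t f = e^t g(s)$, so $-f + \tilde\theta(x, df_x) = -C$ on that component. Consequently $F^f \circ \psi \equiv c^f - C$ on the end component with constant $C$, while the same argument applied to $f \equiv 0$ gives $F \equiv c^L$ on each cylindrical end of $L$, with the same constant at the upper and lower ends of a given connected component (since $\tilde\theta|_L = 0$ at both ends).

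With these computations in hand, part~(1) is immediate: $L^f$ is leveled at the upper (resp.\ lower) end iff $F^f$ is constant across its components there, iff $C_{+,j}$ (resp.\ $C_{-,j}$) is independent of the component index $j$, iff $f$ has a leveled cylindrical upper (resp.\ lower) end. For part~(2), $L \cup L^f$ is leveled iff one can choose additive constants for $F$ on $L$ and $F^f$ on $L^f$ so that each primitive takes a common value on the upper end of $L \cup L^f$ and on its lower end. Part~(1) already forces $C_{+,j} \equiv C_+$ and $C_{-,j} \equiv C_-$, and the matching conditions then read $c^L = c^f - C_+$ at the upper end and $c^L = c^f - C_-$ at the lower end, which are simultaneously solvable in the free constants iff $C_+ = C_-$.

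The main obstacle is keeping careful track of the additive constants. The Weinstein compatibility with $\beta$ is essential here: it forces $\tilde\theta$ to vanish on the zero section over both cylindrical ends of $L$, which identifies the primitive $F$ at the upper and lower ends of each component and thereby reduces the condition in part~(2) to the clean statement $C_+ = C_-$ rather than something involving the difference of primitive values between the two ends of $L$.
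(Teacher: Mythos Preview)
Your approach is correct and genuinely different from the paper's. The paper computes the difference of primitives via the Hamiltonian isotopy formula
\[
\phi_1^*(e^t\alpha)-e^t\alpha=d\!\int_0^1(\iota_X e^t\alpha - f)\circ\phi_\tau\,d\tau,
\]
and then evaluates the integrand on each cylindrical end to get $\theta^f\circ\phi_1-\theta=C-C_\zeta$. You instead pull everything back through the exact Weinstein embedding and use the tautological identity $\iota_{df}^*\lambda=df$ to obtain the primitive $-f+\tilde\theta\circ(\mathrm{id},df)$ directly. Your route is more elementary (no flow integral, no McDuff--Salamon identity) and arguably closer in spirit to how $L^f$ is actually defined in the paper; the paper's route is coordinate-free and does not depend on having set up the Weinstein neighborhood compatibly with $\beta$.

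One inaccuracy: the compatibility with $\beta$ only gives $d\tilde\theta=dV$ on the ends, hence $\tilde\theta=V+a_\pm$ for some constants $a_\pm$ on the upper and lower ends. Since $\tilde\theta$ is a single function on a connected neighborhood of $L$, you may normalize one of these constants to zero but not both; in general $a_+-a_-=\theta_+-\theta_-$, the jump in the primitive of $L$ between the ends, which need not vanish. So your claim that $\tilde\theta|_L=0$ at both ends, and hence that $F\equiv c^L$ is the \emph{same} constant top and bottom, is not justified. Fortunately this does not matter: inserting $a_\pm$ into your matching conditions gives $c^L+a_+=c^f-C_+ +a_+$ and $c^L+a_-=c^f-C_- +a_-$, and the $a_\pm$ cancel, leaving exactly $c^L=c^f-C_+$ and $c^L=c^f-C_-$ as you wrote. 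The last paragraph of your write-up therefore overstates the role of the compatibility: it is needed to identify $\tilde\theta$ with $V$ \emph{up to a constant} on each end, but the reduction to $C_+=C_-$ comes from the cancellation, not from $\tilde\theta|_L$ literally vanishing.
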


\begin{proof}
	We begin by setting notation.	Denote by $\theta$ (resp.\ $\theta^f$) the primitive of $e^t \alpha$ along $L$ (resp.\ $L^f$).  Denote by $\phi_\tau$ the time-$1$ Hamiltonian isotopy that displaces $L$ to $L^f$, with the Hamiltonian function $f$ and Hamiltonian vector field $X$.  Finally, let $\Lambda_\zeta$ be a component of $\Lambda_-$ or of $\Lambda_+$, and write $f(s,t) = e^t g_\zeta(s) + C_\zeta$ in local coordinates.  
	
	The key computation will show that $\theta^f \circ \phi - \theta = C - C_\zeta$ on the end defined by $\Lambda_\zeta$. We make use of the following identity from \cite[Proposition 9.18]{mcduff-salamon}:
	\begin{equation} \label{eq:isotopy-form}
		\phi_1^*(e^t \alpha) - e^t \alpha = d \int_0^1 (\iota_X e^t \alpha - f)\circ \phi_\tau\, d\tau.
	\end{equation}

	Denote function defined by the integral on the right by $\Theta$. By looking at primitives on both sides of Equation~\eqref{eq:isotopy-form}, we see that, for some constant $C$,
	\begin{equation} \label{eq:primitives}
		\theta^f \circ \phi_1 - \theta = \Theta + C.
	\end{equation}
	
	Along the end defined by $\Lambda_\zeta$, a direct computation of $X$ in local coordinates on $\bbR \times J^1 \Lambda_\zeta$ shows that $\iota_X e^t \alpha - f = -C_\zeta$, and hence, via Equation~\eqref{eq:primitives}, that 
	\begin{equation}
	\theta^f \circ \phi - \theta = C - C_\zeta.
	\end{equation}
	
	Since $\theta$ is constant along $\Lambda_-$ (resp.\ $\Lambda_+$), $\theta^f$ is constant along $\Lambda_-$ (resp.\ $\Lambda_+$) precisely when all of the constants $C_\zeta$ agree at $-\infty$ (resp.\ $+\infty)$, i.e.\ when $f$ is leveled.  This proves (1).
	
	To prove (2), notice that $L \cup L^f$ is leveled when, after adjusting $\theta^f$ by an overall constant, we have $\theta^f \circ \phi - \theta = 0$ at \emph{all} ends, which happens precisely when \emph{all} of the constants $C_\zeta$ are the same.
\end{proof}

\begin{remark}
An alternative, and more geometric, proof of Lemma~\ref{lem:disconnected_ends} involves building an exact Lagrangian strip connecting $L$ to $L^f$, such that the boundary of the strip is given by a path $\gamma\subset L$, a Legendrian path $\xi_+\subset \{t=T_+\} $, a path $\gamma^f \subset L^f$, and a Legendrian path $\xi_-\subset \{t=T_-\} $. That such a strip exists is a corollary of the techniques used in Section~\ref{ssec:geom-proof}, under the condition that $C_+=C_-$. We leave the details to the reader. Notice that, if $\Lambda_-$ or $\Lambda_+$ is disconnected, one needs to repeat the argument for each connected component.
\end{remark}

\subsection{Pushing off of $L$}
\label{ssec:push-off}

The goal of this section is to explain how to construct a globally cylindrical, non-singular function $f: L \to \R$, which, by Proposition~\ref{prop:displaced-cobordism}, yields an embedded leveled cobordism $L \cup L^f$. The condition that $f$ is non-singular implies that the gradient of such a function must have total winding number $\chi(L)$ along the boundary with respect to the incoming direction. In other words, the winding number of the gradient along $\Lambda_-$ is determined by adding $\chi(L)$ to the winding number along $\Lambda_+$.  The requirement that $f$ must be globally cylindrical is the main issue in the construction, and we will need to require that the winding number of the gradient along $\Lambda_+$ is at least $1$ to overcome the issue. 

The main construction in this section yields the following proposition.

\begin{proposition} \label{prop:construct-f}
Suppose $\Lambda_\pm$ are $r_\pm$-component Legendrian links in $(Y,\alpha)$ and $L$ is a Lagrangian cobordism from $\Lambda_-$ to $\Lambda_+$. There exists a non-singular globally cylindrical function $f:L\to \R$ so that
\begin{itemize}
\item $\Lambda_- \cup \Lambda_-^f$ is isotopic to the satellite of $\Lambda_-$ by 2-tangles with $1+2g_i+k_i$ full twists on the $i^{th}$ component for any choice of $g_i$ and $k_i$ such that $\sum g_i = g(L)$ and $\sum k_i = r_--1$, and 
    
\item $\Lambda_+ \cup \Lambda_+^f$ is isotopic to the satellite of $\Lambda_+$ by 2-tangles so that there is one full twist on a single component.
\end{itemize}
\end{proposition}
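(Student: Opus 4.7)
The plan is to reinterpret the statement in terms of winding numbers of the gradient along $\partial L$ and then to build $f$ piece by piece from a smooth handle decomposition of $L$. By Lemma~\ref{lem:cylindrical_conditions}, at each cylindrical end the displaced Lagrangian $L^f$ takes the form $J^1(g_\pm)\subset J^1\Lambda_\pm$, so the $2$-copy $\Lambda_\pm\cup\Lambda_\pm^f$ is a Legendrian satellite of $\Lambda_\pm$ by a $2$-tangle on each component. A direct front-diagram calculation shows that the number of positive full twists of this $2$-tangle on the $i$th component equals the winding number of the curve $s\mapsto(g_\pm'(s),g_\pm(s))$ about the origin along that component. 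Thus the proposition reduces to producing a non-singular globally cylindrical $f$ whose associated functions $g_-^i$ and $g_+^j$ have the prescribed windings $1+2g_i+k_i$ and $l_j$, and which are moreover identically $1$ on designated small arcs; these arcs will host the strip required by Definition~\ref{defn:glob-cyl}.

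To produce such an $f$, fix a smooth handle decomposition of $L$ relative to $\Lambda_-\times(-\infty,T_-]$ consisting of $r_--1$ joining $\wedge$-handles, $g(L)$ genus pairs (each a $\vee$-handle followed by a $\wedge$-handle), and $r_+-1$ splitting $\vee$-handles. Each joining handle and each genus pair is freely assigned to one of the $r_-$ lower components, and the counts attached to the $i$th component record precisely the pair $(k_i,g_i)$. On each lower cylindrical end, take $f=e^tg_-^i(s)+C$ with $g_-^i$ of winding $1+2g_i+k_i$ and identically $1$ on a small arc of $\Lambda_-^i$. On each elementary $1$-handle, use an explicit non-singular local model whose effect on winding adds the correct contribution: $+1$ from a joining handle, $+2$ from a genus pair, and a prescribed splitting of winding across the two outputs of a splitting handle. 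Allocate the single full twist so that $g_+^{j_0}$ has winding $1$ on a distinguished upper component $\Lambda_+^{j_0}$ while the remaining $g_+^j$ are constant $1$. By construction, the arc on $\Lambda_-^1$ where $g_-^1\equiv1$ is the starting end of a gradient strip of $\nabla f$ that threads through the ``trunk'' of the handle decomposition and exits at the arc on $\Lambda_+^{j_0}$ where $g_+^{j_0}\equiv1$.

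Finally, thicken this gradient strip to an embedded $\Phi\colon\bbR\times(-\epsilon,\epsilon)\hookrightarrow L$ and apply a compactly supported Hamiltonian perturbation to straighten $f$ to $f\circ\Phi(u,v)=e^u+C$ on the strip, yielding the data required by Definition~\ref{defn:glob-cyl} without altering $f$ on the cylindrical ends or modifying the winding data already arranged.

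The main technical obstacle is the construction of the local non-singular models on each $1$-handle and the verification that they glue smoothly. The delicate case is the genus pair, where the saddle produced by the $\vee$-handle must be exactly canceled by the saddle of the subsequent $\wedge$-handle while incrementing the winding by $2$, all without introducing interior critical points. Global consistency of the entire construction amounts to the identity $\sum_j l_j-\sum_i(1+2g_i+k_i)=2-2r_--2g(L)$, which follows from tallying the winding contributions of each handle in the chosen decomposition.
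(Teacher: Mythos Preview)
Your outline correctly identifies the reduction to winding numbers and the need to build $f$ from local pieces, but it defers precisely the work that constitutes the proof.  The sentence ``use an explicit non-singular local model'' on each $1$-handle is the entire content of the proposition, and you do not supply those models.  In particular, for the genus pair you acknowledge that one must cancel two saddles while increasing the winding by $2$ and keeping $f$ nonsingular \emph{and} globally cylindrical, and then you stop.  That is the hard step; asserting it can be done is not a proof.  The paper itself flags exactly this difficulty: in the paragraph following the statement of the proposition it describes the Morse-theoretic picture you are sketching (push all saddles into a disk capped off at the bottom, source and sink into a disk at the top) and then says ``It is not trivial, but true, that such a function $f$ can be required to be globally cylindrical.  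That said, we will not directly pursue this intuitive approach.''  The paper instead decomposes $L_{[T_-,T_+]}$ into planar \emph{combs}, writes down explicit interpolating functions $\widetilde F_n,\bar F_n$ on each comb by hand, checks they are nonsingular (Lemma~\ref{lem:no-crit-points}), and glues.  The explicit formulas are what make the globally-cylindrical strip visible and what guarantee no critical points; your outline replaces them with promises.

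A second, smaller gap: your final step of ``apply a compactly supported Hamiltonian perturbation to straighten $f$ to $e^u+C$ on the strip'' is not well-posed as written ($L$ carries no symplectic structure, so ``Hamiltonian'' is the wrong word), and more importantly you must argue that this straightening introduces no new critical points of $f$ outside the strip.  That is plausible for a sufficiently thin strip, but it needs to be said.  The paper avoids this issue entirely by arranging $f=e^t$ on the strip from the outset, as a feature of the explicit comb functions.
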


As a consequence, we obtain:

\begin{corollary} \label{cor:push-off-cobord}
 For any $n \geq 1$, the surface $\Sigma_n(L, f) := \bigcup_{i=0}^{n-1}\phi(\Gamma(\frac{i}{n}df))$  is a Lagrangian cobordism from the satellite of $\Lambda_-$ by the $r_-$-tuple of $n$-tangles $(\Delta^{1+2g_1+k_1}, \ldots, \Delta^{1+2g_{r_-}+k_{r_-}})$ where $\sum g_i = g(L)$ and $\sum k_i = r_--1$ to the satellite of $\Lambda_+$ by any $r_+$-tuple of $n$-tangles $(\Delta^{l_1}, \Delta^{l_2}, \ldots, \Delta^{l_{r_+}})$ where $\sum l_i = 1$. 
\end{corollary}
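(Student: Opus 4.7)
The plan is to leverage Proposition~\ref{prop:construct-f} by replacing its single pushoff with a family of $n$ evenly-spaced scalings, and then to verify that the resulting union is an embedded, leveled Lagrangian cobordism whose cylindrical ends realize the desired satellites.

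First I would apply Proposition~\ref{prop:construct-f} to produce a non-singular globally cylindrical function $f\colon L \to \bbR$ with the prescribed end behavior. For each $i \in \{0, 1, \ldots, n-1\}$ the scaled function $\tfrac{i}{n} f$ is again globally cylindrical (the normal form $e^t g(s) + C$ and the matching additive constants across components are both preserved under positive scaling) and is non-singular for $i \geq 1$. Proposition~\ref{prop:displaced-cobordism}(1) therefore produces a leveled Lagrangian cobordism $\phi(\Gamma(\tfrac{i}{n}\, df))$ for each $i$; the $i=0$ term is $L$ itself. After possibly replacing $f$ by a small positive multiple (which preserves non-singularity, global cylindricality, and all winding numbers at the ends), every such graph lies inside the domain of the exact tubular neighborhood $\phi$ supplied by Lemma~\ref{tubnbhd}.

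Next I would check pairwise disjointness and the leveled property of the union. If two graphs $\Gamma(\tfrac{i}{n}\, df)$ and $\Gamma(\tfrac{j}{n}\, df)$ met over a point $p \in L$, then $\tfrac{i-j}{n} df(p) = 0$, and non-singularity of $df$ would force $i = j$; since $\phi$ is an embedding, the $n$ images are pairwise disjoint in $\bbR \times Y$. That $\Sigma_n(L,f)$ is leveled follows from an iterated application of Proposition~\ref{prop:displaced-cobordism}(2), using that all scalings share the same additive constant as $f$. Finally, on the upper cylindrical end the function $\tfrac{i}{n} f(t,s) = e^t \cdot \tfrac{i}{n} g_+(s)$ has Legendrian image $j^1\bigl(\tfrac{i}{n} g_+\bigr) \subset J^1\Lambda_+$ by Lemma~\ref{lem:cylindrical_conditions}, so the restriction of $\Sigma_n(L,f)$ there is the $n$-copy $\bigcup_i j^1(\tfrac{i}{n} g_+)$. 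Reading this $n$-copy through the front-projection description of satellites in Figure~\ref{fig:front-sat} identifies it with the satellite of $\Lambda_+$ by the $r_+$-tuple of $n$-tangles with the claimed $\Delta^{l_i}$ pattern; the lower end is analogous.

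The main obstacle is this last identification: confirming that the $n$-copy assembled from the scalings $\tfrac{i}{n}\, df$ carries the same per-component twist count as the 2-copy produced in Proposition~\ref{prop:construct-f}. The key point is that the number of full twists on each component equals the winding number of the relevant 1-jet section along the base Legendrian, which is unchanged by multiplying $df$ by a positive constant. Hence each adjacent pair in the $n$-copy wraps in exactly the same pattern as the original 2-copy, and these fit together into the tangle $\Delta^{l_i}$ on the $i$-th upper component and $\Delta^{1+2g_i+k_i}$ on the $i$-th lower component, giving the cobordism asserted by the corollary.
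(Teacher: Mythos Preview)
Your argument is correct and is precisely the elaboration the paper leaves implicit: the corollary is stated as an immediate consequence of Proposition~\ref{prop:construct-f}, and your use of Proposition~\ref{prop:displaced-cobordism} together with the scaling-invariance of the winding data of $df$ is exactly what makes that deduction work. One small sharpening: rather than saying the scalings ``share the same additive constant as $f$,'' note that the explicit $f$ built in Section~\ref{ssec:push-off} has additive constant $C=0$, so the constants $\tfrac{i}{n}C$ all vanish and Proposition~\ref{prop:displaced-cobordism}(2) applies uniformly.
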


The intuition behind the construction for Proposition~\ref{prop:construct-f}, at least in the case where the Legendrian ends are connected, is to compactify $L_{[T_-,T_+]}$ by adding a disk $D_-$ at the bottom and a disk $D_+$ at the top.  The function $f$ is then chosen to have one source and one sink, both contained in $D_+$, and $2g$ saddles, all contained in $D_-$.  In particular, $f$ is non-singular on $L_{[T_-,T_+]}$.  It is not trivial, but true, that such a function $f$ can be required to be globally cylindrical.  That said, we will not directly pursue this intuitive approach. Instead, we provide an explicit construction of the function we need by breaking $L_{[T_-,T_+]}$ into planar pieces, defining a globally cylindrical function without critical points on each piece, and then gluing the domains and the functions together to construct the desired $f$.  

\begin{figure}
\labellist
    \small
    \pinlabel  $+\widetilde{F}$   at 216 245
    \pinlabel  $-\widetilde{F}$   at 150 75 
    \pinlabel  $-\bar{F}$   at  336 75
    \pinlabel  $-\bar{F}$   at  396 75
    \pinlabel  $a_1$   at 36 190
    \pinlabel  $a_1$   at 72 115
    \pinlabel  $b_1$   at 72 190
    \pinlabel  $b_1$   at 36 115
    \pinlabel  $a_g$   at 134 190
    \pinlabel  $a_g$   at 170 115
    \pinlabel  $b_g$   at 170 190
    \pinlabel  $b_g$   at 134 115
    \pinlabel  $c_1$   at 206 115
    \pinlabel  $c_1$   at 206 190
    \pinlabel  $c_{r_--1}$   at 270 115
    \pinlabel  $c_{r_--1}$   at 270 190
    \pinlabel  $d_1$   at 336 115
    \pinlabel  $d_1$   at 336 190
    \pinlabel  $d_{r_+-1}$   at 396 115
    \pinlabel  $d_{r_+-1}$   at 396 190
    \pinlabel  $z_0$   at 72 264
    \pinlabel  $z_0$   at 72 36
    \pinlabel  $z_{r_+}$   at 360 264
    \pinlabel  $z_{r_+}$   at 233 36
    \pinlabel  $z_1$   at 307 45
    \pinlabel  $z_1$   at 358 45
    \pinlabel  $z_{r_+-1}$   at 366 25
    \pinlabel  $z_{r_+-1}$   at 425 25
    \pinlabel  $\Lambda_-^1$   at 106 152
    \pinlabel  $\Lambda_-^2$   at 225 152
    \pinlabel  $\Lambda_-^{r_--1}$   at 259 153
    \pinlabel  $\Lambda_-^{r_-}$   at 356 152
    \pinlabel  $\Lambda_+^1$   at 153 -3
    \pinlabel  $\Lambda_+^{2}$   at 333 -3
    \pinlabel  $\Lambda_+^{r_+}$   at 395 -3
    \pinlabel  $\Lambda_+^{1}$   at 216 325
    \endlabellist
\centerline{\includegraphics[width=4.5in]{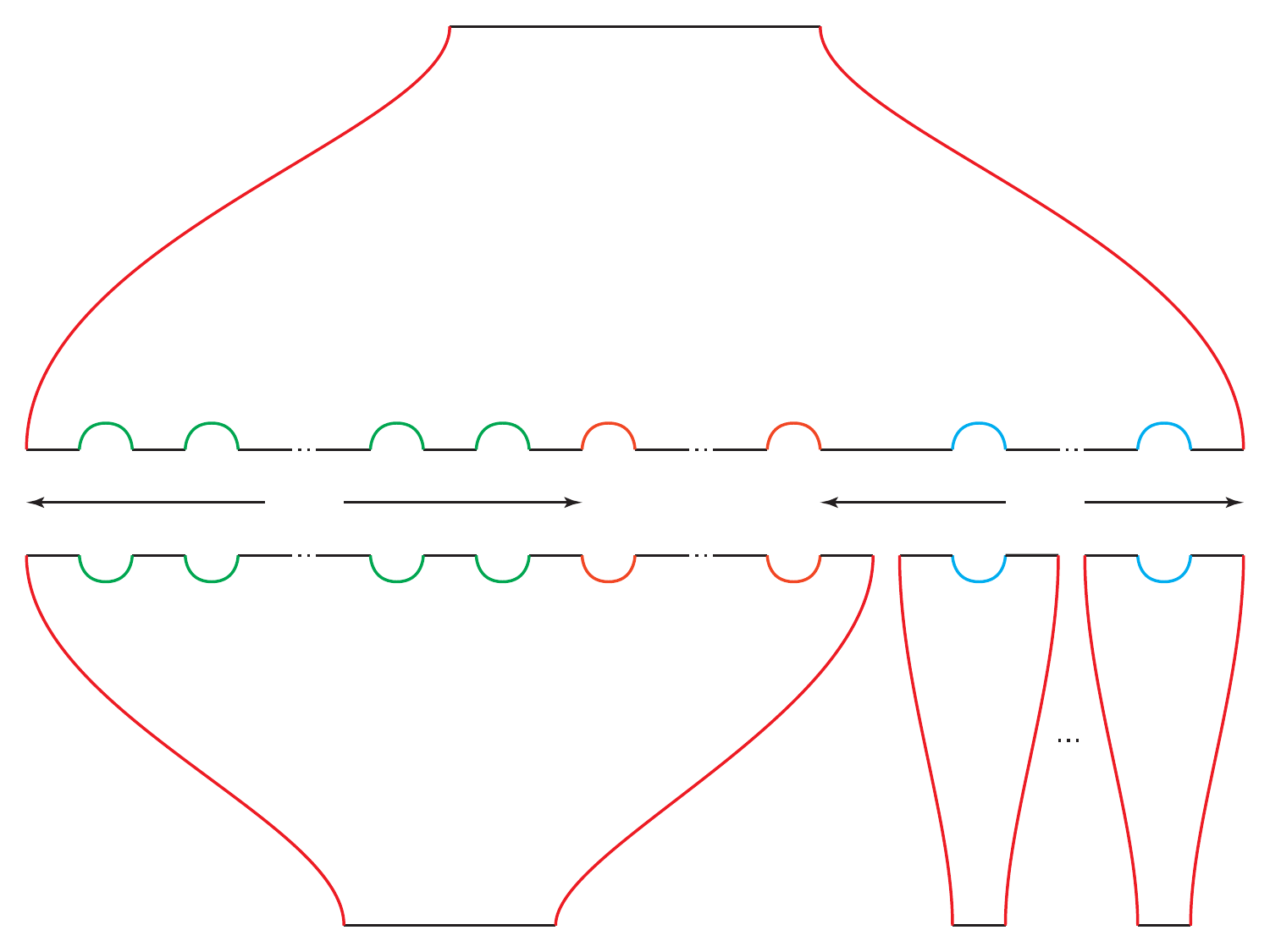}}
\caption{Gluing $r_+ +1$ combs $C_n$ according to the pattern depicted in the figure yields a genus $g$ surface $\Sigma$ with $r_- + r_+$ boundary components. The (unglued) intervals $I_k$ are horizontal segments, while the (glued) intervals $J_k$ are curved segments.}
\label{fig:comb-glue}
\end{figure}

We begin the construction by specifying how to build $L_{[T_-,T_+]}$ --- which we think of as an abstract surface $\Sigma$ with $r_+$ ``positive'' boundary components, $r_-$ ``negative'' boundary components, and genus $g$ --- out of planar pieces. Each planar piece will be an \allbf{$n$-comb} $C_n$, i.e.\ a disk with $n+2$ distinguished and disjoint closed intervals $I_0, \ldots, I_{n+1}$ along its boundary; denote the intervals in the closure of the complement of $\bigcup I_k$ by $J_0, \ldots, J_{n+1}$. To assemble $\Sigma$ out of $r_+ + 1$ $n$-combs, we glue the $n$-combs along the intervals $J_i$ according to the identifications in Figure~\ref{fig:comb-glue}.  Figure~\ref{fig:comb-decomp} depicts the result $\Sigma$ of the gluing process in Figure~\ref{fig:comb-glue}, demonstrating how the curves labeled $a_i$ and $b_i$ come together to create genus, how the curves $c_i$ yield additional ``negative'' boundary components, and how the curves $d_i$ and $z_i$ (for $1 \geq i \leq r_+-1$) produce additional ``positive'' boundary components. It is straightforward to check by computing the Euler characteristic and counting connected components of the boundary that the gluing pattern yields the desired surface $\Sigma$ with the correct number of boundary components and genus.  Note that the precise recipe for decomposing $\Sigma$ into combs matters less than the fact that $\Sigma$ \emph{can} be decomposed into combs.

\begin{figure}
\labellist
\small\hair 2pt
 \pinlabel {$\Lambda^1_+$} [l] at 115 156
 \pinlabel {$\Lambda^2_+$} [l] at 313 156
 \pinlabel {$\Lambda^1_-$} [r] at 28 23
 \pinlabel {$\Lambda^2_-$} [t] at 204 13
 \pinlabel {$\Lambda^3_-$} [l] at 313 23
 \pinlabel {$z_0$} [r] at 25 85
 \pinlabel {$z_1$} [l] at 317 88
 \pinlabel {$z_2$} [tl] at 193 66
 \pinlabel {$a_1$} [l] at 38 62
 \pinlabel {$b_1$} [l] at 59 62
 \pinlabel {$a_2$} [l] at 103 50
 \pinlabel {$b_2$} [l] at 122 50
 \pinlabel {$c_1$} [b] at 171 42
 \pinlabel {$c_2$} [b] at 233 42
 \pinlabel {$d_1$} [bl] at 265 101
\endlabellist
\centerline{\includegraphics{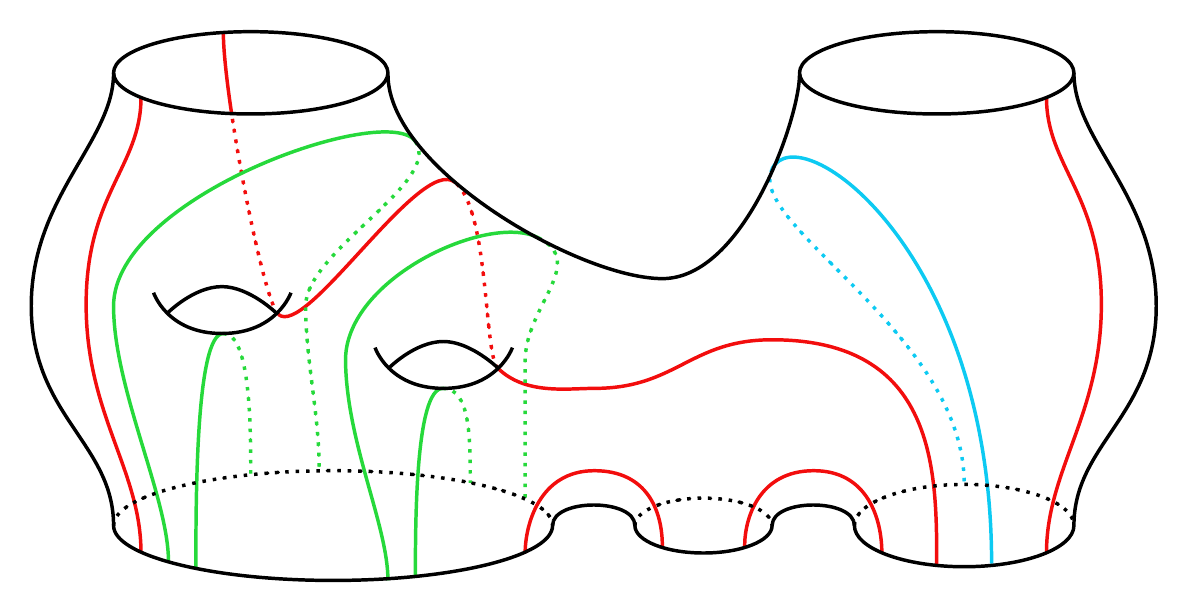}}
\caption{The gluing instructions on the combs in Figure~\ref{fig:comb-glue} yield a surface with $r_+$ ``positive'' boundary components (here, $r_+=2$), $r_-$ ``negative'' boundary components (here, $r_-=3$), and genus $g$ (here, $g=2$).}
\label{fig:comb-decomp}
\end{figure}

\begin{remark} \label{rem:redistribute}
The gluing pattern depicted in Figure~\ref{fig:comb-glue} corresponds to putting $2g(L)+1$ twists on the first component of $\Lambda_-$, $r_+$ twists on the last component, and one twist on each of the other components. By shifting the endpoints of the curves labeled $a_i$, $b_i$, and $d_i$ to other components of $\Lambda_-$ in Figure~\ref{fig:comb-decomp}, we can redistribute the endpoints of these intervals onto different negative boundary components of $\Sigma$.  In the gluing picture in Figure~\ref{fig:comb-glue}, the redistribution is tantamount to interspersing the intervals labeled $a_i$, $b_i$, and $d_i$ between the intervals labeled $c_i$.
\end{remark}

The next step in the construction is to define non-singular functions $\widetilde{F}_n, \bar{F}_n: C_n \to \R$ that are globally cylindrical.  Note that while we will write down $C^1$ functions that are piecewise $C^\infty$, we will assume that such functions have been smoothed to $C^\infty$ functions through modification in arbitrarily small neighborhoods of the non-smooth points; if the $C^1$ functions have no critical points, then neither will the smoothed functions.

The functions $\widetilde{F}_n$ and $\bar{F}_n$ will arise from interpolating between pairs of functions at the ends of $C_n$.  Define functions $\tilde{g}^\pm_n, \bar{g}^\pm_n: [-\pi/2,3\pi/2] \to \bbR$ by smoothing the following piecewise $C^1$ functions.  See Figure~\ref{fig:end-functions}.
\begin{align*}
    \tilde{g}^+_n (s) &= \begin{cases} 1 & s \in [0,\pi] \\ \cos 2s & \text{otherwise} \end{cases} &  \bar{g}^+_n (s) & = 1 \\
    \tilde{g}^-_n (s) &= \begin{cases} 1 & s \in [0,\pi/2] \\ \cos 4ns & s \in [\pi/2,\pi] \\
    \cos 2s & \text{otherwise} \end{cases} & \bar{g}^-_n(s) &= \begin{cases} \cos 4ns & s \in [\pi/2,\pi] \\ 1 & \text{otherwise} \end{cases}
\end{align*}

\begin{figure}
    \labellist
    \small
    \pinlabel $\tilde{g}^+$ [l] at 144 306
    \pinlabel $\tilde{g}_n^-$ [l] at 144 126
    \pinlabel $\bar{g}^+$ [l] at 396 306
    \pinlabel $\bar{g}_n^-$ [l] at 396 126
    \pinlabel $n$   at 108 6
    \pinlabel $n$   at 360 6
    \pinlabel $-\frac{\pi}{2}$ [r] at 16 74
    \pinlabel $\frac{3\pi}{2}$ [l] at 160 74
    \pinlabel $1$ [r] at 16 150
    \pinlabel $-1$ [r] at 16 24
    \pinlabel $-\frac{\pi}{2}$ [r] at 16 251
    \pinlabel $\frac{3\pi}{2}$ [l] at 160 251
    \pinlabel $1$ [r] at 16 327
    \pinlabel $-1$ [r] at 16 201
    \pinlabel $-\frac{\pi}{2}$ [r] at 269 74
    \pinlabel $\frac{3\pi}{2}$ [l] at 413 74
    \pinlabel $1$ [r] at 269 150
    \pinlabel $-1$ [r] at 269 24
    \pinlabel $-\frac{\pi}{2}$ [r] at 269 251
    \pinlabel $\frac{3\pi}{2}$ [l] at 413 251
    \pinlabel $1$ [r] at 269 327
    \pinlabel $-1$ [r] at 269 201
    \endlabellist
    \centering
    \includegraphics[width=4.5in]{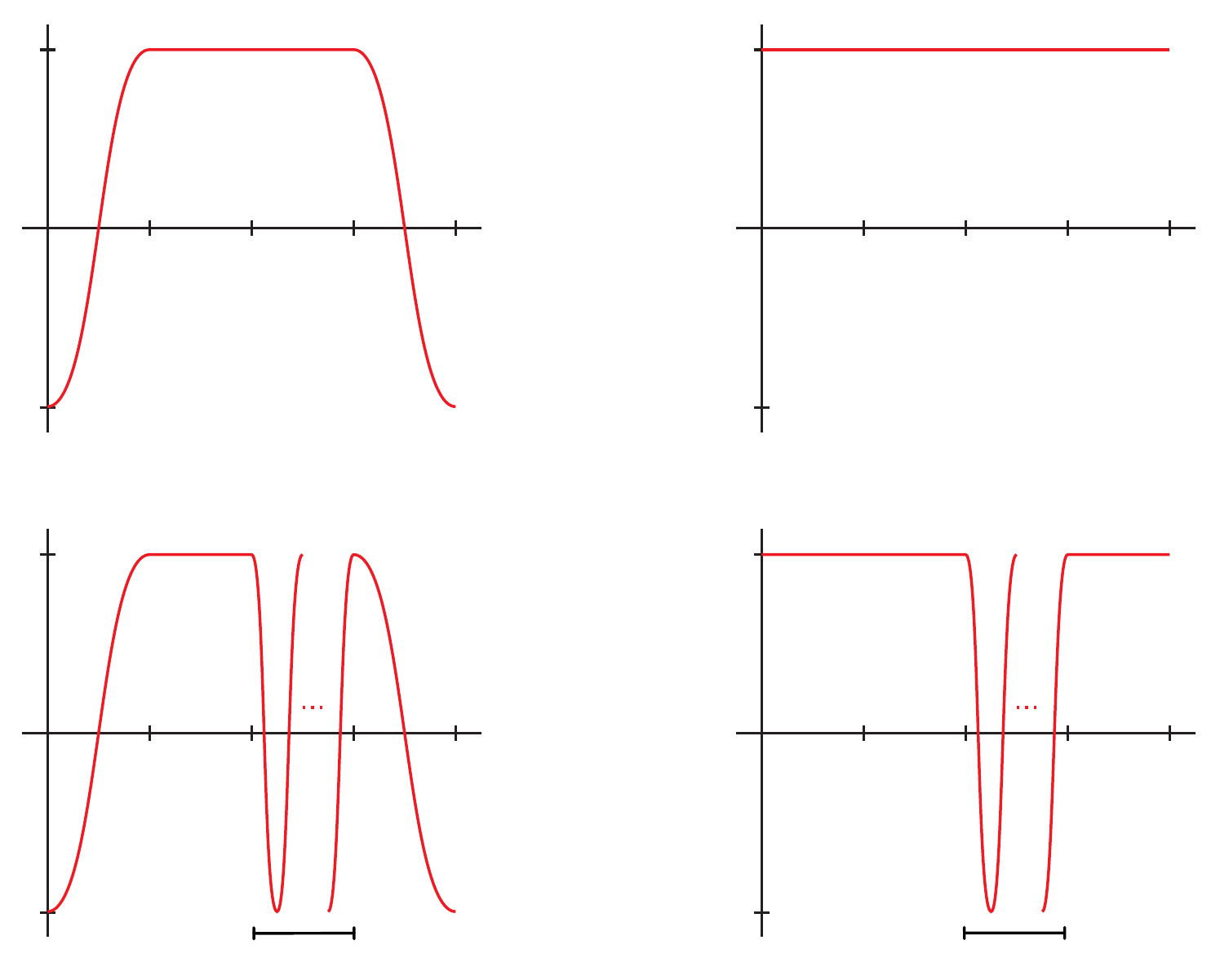}
    \caption{The twisted functions $\tilde{g}^\pm_n$ (left) and flat functions $\bar{g}^\pm_n$ (right) at the ends of the functions $\widetilde{F}, \bar{F}$.}
    \label{fig:end-functions}
\end{figure}

We next define a function $\widetilde{F}_n: [-\pi/2,3\pi/2] \times [T_-, T_+] \to \R$ by interpolating between $\tilde{g}^\pm_n$.  For some fixed $\epsilon < \frac{1}{2} (T_+-T_-)$, let $\sigma: \R \to \R$ be a smooth, increasing function that is $0$ on $(-\infty, T_-+\epsilon)$ and $1$ on $(T_+-\epsilon,\infty)$. The function $F$ is then given by
\[\widetilde{F}(s,t) = e^t\bigl[(1-\sigma(t))\tilde{g}_-(s) + \sigma(t) \tilde{g}_+(s) \bigr]; \]
The function $\bar{F}_n$ is defined similarly using $\bar{g}^\pm_n$. See Figure~\ref{fig:interpolating-function}.

\begin{figure}
    \centerline{
    \includegraphics[width=3in]{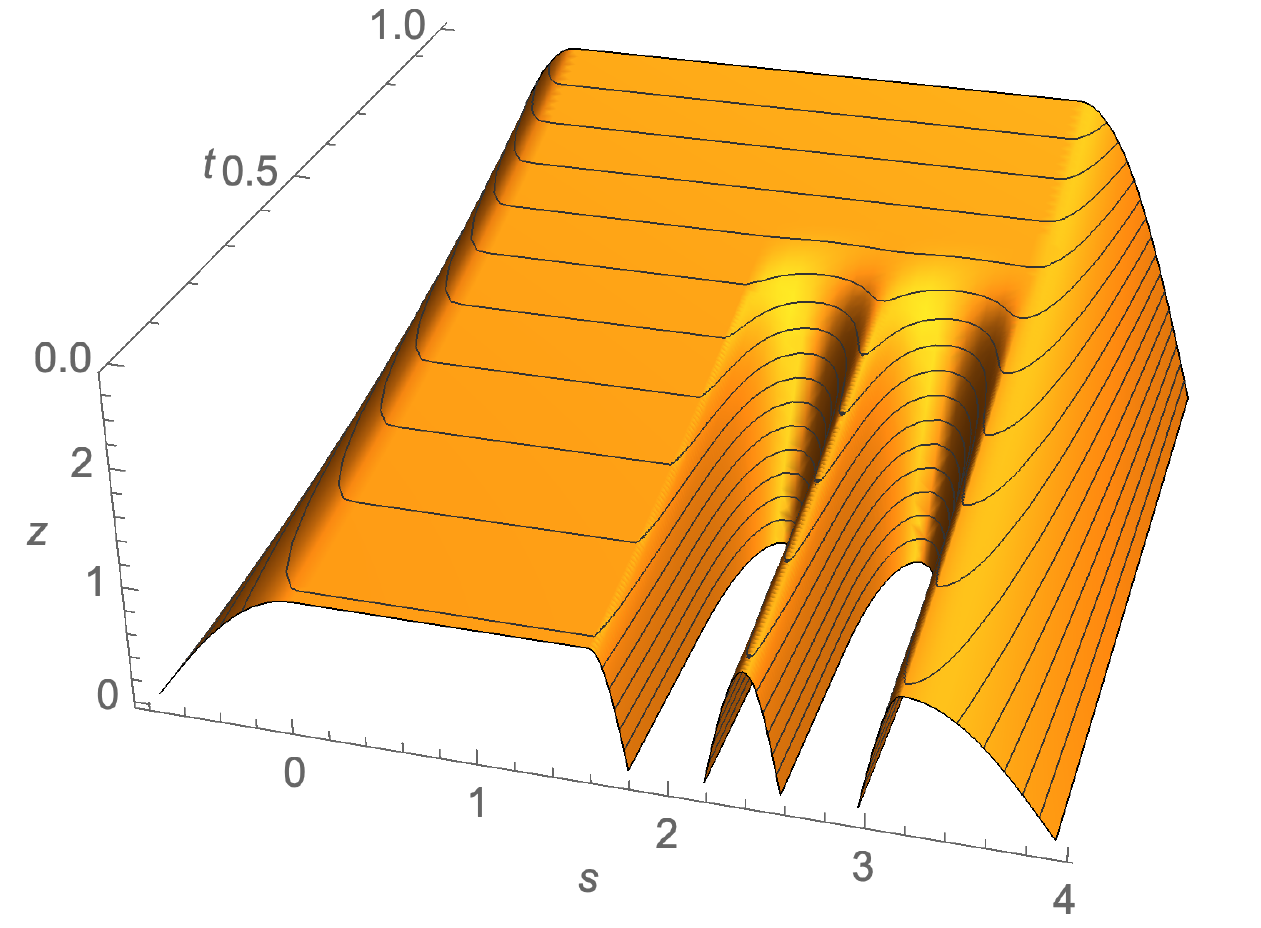} }
    \caption{The function $\widetilde{F}_2$ that interpolates between $\tilde{g}^-_2$ and $\tilde{g}^+_2$ on the comb $C_2$.  Only the portions of the plot with $\widetilde{F}_2\geq 0$ are shown.}
    \label{fig:interpolating-function}
\end{figure}

The next step is to carve an $n$-comb $C_n$ out of the domain of $\widetilde{F}_n$ by taking the points on which $\widetilde{F}(s,t) \geq 0$ and letting the intervals $I_k$ be the intersection of this set with the top and bottom edges of the domain, numbered so that $I_0$ is $[-\pi/2, 3\pi/2] \times \{T_+\}$.  We again denote by $\widetilde{F}_n$ its restriction to $C_n$.  Apply a similar procedure to $\bar{F}_n$.

\begin{lemma} \label{lem:no-crit-points}
    The function $\widetilde{F}_n: C_n \to \bbR$ (resp. $\bar{F}_n$) is non-singular and cylindrical over $\tilde{g}^\pm_n$ (resp. $\bar{g}^\pm_n$) at the ends with respect to the given coordinates on $C_n$.
\end{lemma}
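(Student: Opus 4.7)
My plan is to prove the two assertions of the lemma --- cylindricality at the ends and non-singularity --- separately, working with the piecewise-$C^1$ formulas as the paper explicitly allows. The key structural input is a pointwise comparison $\tilde{g}_+^n \geq \tilde{g}_-^n$ (and likewise $\bar{g}_+^n \geq \bar{g}_-^n$) between the two end profiles, from which the absence of critical points follows almost immediately.

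Cylindricality is built into the construction: on $t \leq T_- + \epsilon$ the cutoff $\sigma(t)$ vanishes, so $\widetilde{F}_n(s,t) = e^t \tilde{g}_-^n(s)$, and similarly $\widetilde{F}_n(s,t) = e^t \tilde{g}_+^n(s)$ on $t \geq T_+ - \epsilon$. Both expressions are of the required form $e^t g(s) + C$ with $C = 0$ and the prescribed profile, and the identical argument handles $\bar{F}_n$.

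For non-singularity, I would first record the pointwise inequality $\tilde{g}_+^n(s) \geq \tilde{g}_-^n(s)$ by reading off the piecewise definitions: the two functions agree on $[-\pi/2,\pi/2]\cup[\pi,3\pi/2]$, and on $[\pi/2,\pi]$ the difference is $1 - \cos 4ns \geq 0$ (the inequality $\bar{g}_+^n \geq \bar{g}_-^n$ is even simpler, since $\bar{g}_+^n \equiv 1$). A direct differentiation of the interpolation formula then yields
\[
    \partial_t \widetilde{F}_n(s,t) = \widetilde{F}_n(s,t) + e^t \sigma'(t)\bigl(\tilde{g}_+^n(s) - \tilde{g}_-^n(s)\bigr),
\]
a sum of two non-negative terms on $C_n$. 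On the interior of $C_n$, where $\widetilde{F}_n > 0$, this forces $\partial_t \widetilde{F}_n > 0$ and immediately rules out interior critical points.

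The main (and only delicate) step is then to check the curved boundary arcs $J_k \subset \{\widetilde{F}_n = 0\}$ lying in the interior of the rectangle $[-\pi/2, 3\pi/2] \times [T_-, T_+]$. I would dispatch these by a case analysis on the $t$-coordinate. At a point in a cylindrical end, $\sigma' = 0$ and the equation $\widetilde{F}_n = 0$ forces $\tilde{g}_-^n(s) = 0$ (respectively $\tilde{g}_+^n$); since all such zeros are transverse zeros of $\cos 2s$ or $\cos 4ns$, we get $\partial_s \widetilde{F}_n \neq 0$. In the middle region where $\sigma'(t) > 0$, the displayed identity shows that $\partial_t \widetilde{F}_n = 0$ on $\{\widetilde{F}_n = 0\}$ forces $\tilde{g}_+^n(s) = \tilde{g}_-^n(s)$; the constant-$1$ regions then contradict $\widetilde{F}_n = 0$, while on the regions where both profiles equal $\cos 2s$ we fall back to the transverse zeros $s = -\pi/4, 5\pi/4$, at which $\partial_s \widetilde{F}_n$ is nonzero as before. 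The verification for $\bar{F}_n$ is entirely parallel, using $\bar{g}_+^n \equiv 1$ in place of $\tilde{g}_+^n$; I expect this boundary case analysis to be the only genuinely new work, as the interior estimate and the cylindricality are essentially automatic from the definitions.
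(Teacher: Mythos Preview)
Your proof is correct and follows essentially the paper's approach, which merely sketches that the common zeros of the partial derivatives of $\widetilde{F}_n$ have negative $\widetilde{F}_n$-value and hence lie outside $C_n$. Your identity $\partial_t \widetilde{F}_n = \widetilde{F}_n + e^t\sigma'(t)\bigl(\tilde{g}_+^n - \tilde{g}_-^n\bigr)$ together with the monotonicity $\tilde{g}_+^n \geq \tilde{g}_-^n$ is a clean way to carry this out, and your boundary case analysis on the arcs $J_k$ makes explicit what the paper leaves to the reader.
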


The lemma follows from a straightforward computation to find the common zeros of the partial derivatives of $\widetilde{F}$, and then recognizing that those common zeros must have negative $\widetilde{F}_n$ value.

The penultimate step in the construction is to assign functions $\pm \widetilde{F}$ and $\pm \bar{F}$ to the combs in Figure~\ref{fig:comb-glue} according to the markings in the figure.  Define $f: \Sigma \to \bbR$ piecewise using the assignments in Figure~\ref{fig:comb-glue}.  Note that $f$ is, indeed, $C^1$ and hence can be smoothed since in collar coordinates along the intervals $J_i$, $f$ is odd with respect to the transverse coordinate.

To complete the construction, we transport the function $f$ to $L$. First,  specify a diffeomorphism $\phi: \Sigma \to L_{[T_-,T_+]}$ with the property that the coordinates $t$ near $T_\pm$ at the ends of the combs that make up $\Sigma$ corresponds to the coordinate $t$ near the ends of $L$. Second, extend $f \circ \phi^{-1}$ to all of $L$.

Finally, we check that the function $f: L \to \bbR$ does, indeed, satisfy the requirements of Proposition~\ref{prop:construct-f}.  The fact that $f$ is non-singular and globally cylindrical comes directly from the construction; in particular, $f$ is leveled since there are embedded strips along which $f$ is exponential from the negative end to \emph{each} positive boundary component since each such component comes from its own comb.  The assertions about the twisting at the ends of the cobordism follow from counting the number of zeros of the functions $g^\pm_n$ used in the construction together with Remark~\ref{rem:redistribute}.

\subsection{Inserting the Tangle Cobordism}
\label{ssec:geom-proof}

The final step of the proof of Proposition~\ref{prop:main_geom}, and hence of Theorem~\ref{thm:geom}, is to insert the tangle cobordism $\Pi_-\prec_P\Pi_+$ into the cobordism $\Sigma_n(L,f)$ built in Corollary~\ref{cor:push-off-cobord}.  

\begin{proof}[Proof of Proposition~\ref{prop:main_geom}]
Given a Lagrangian cobordism $\Lambda_- \prec_L \Lambda_+$, Lemma~\ref{tubnbhd} yields an exact symplectic embedding $\phi: U \subset T^*L \to \bbR \times Y$ with cylindrical ends.  Construct a globally cylindrical function $f$ as in Proposition~\ref{prop:construct-f}. Let $\Phi \simeq \gamma \times (-3\delta, 3\delta)$ be the embedded strip along which $f$ is cylindrical, as per Definition \ref{defn:glob-cyl}.  Define two nested neighborhoods $N_k \subset N$, $k=1,2$, to be the portions of $\Phi$ coming from $\gamma \times (-k\delta,k\delta)$. 

We now insert the tangle cobordism $P$ into $T^*N_1$.  In preparation, shrink $P$ so that it lies in $\bbR \times B(\delta) \subset \bbR \times J^1(-\delta,\delta)$; this can be achieved, for example, by a uniform scaling in all coordinates.  We assume $\delta$ is sufficiently small so that the neighborhood embedding $\phi:T^*L\to \R\times Y$ is defined for all vectors of norm at most $2\delta$. Using the identification map $\beta$ described in Section~\ref{ssec:tubular}, we may think of $P$ as lying in $T^*N_2$; in particular, notice that, in $T^*N_2 \setminus T^*N_1$, $P$ coincides with the graphs of the forms $d(\frac{i}{n} e^t)$ for $i \in \{0, \ldots, n-1\}$. Thus, since $f=e^t+C$ inside $N_2$, we may form the exact Lagrangian $\left(\Sigma_n(L,f) \setminus T^*N_1 \right) \cup \left( P \cap T^*N_2\right)$, which we may then embed into $\bbR \times Y$ via $\phi$ to form $\Sigma(L,P)$.

We need to check that $\Sigma(L,P)$ is still an exact Lagrangian.  Since all of the symplectomorphisms we use are exact (as checked in Section \ref{ssec:tubular}), we can work in $T^*N_1 \subset T^*L$.  Since $P$ is leveled, the primitive $\theta_P$ for the symplectic form along $P$ is constant along the bottom of $P$ (where we might as well take the constant to be $0$) and along the top of $P$.  Since $P$ is cylindrical along the boundary, the primitive $\theta_P$ is constant there as well.  Thus, the primitive $\theta_P$ vanishes near its boundary, and hence matches up with both $\theta$ and $\theta^f$ there since $f$ is equal to $e^t$ in $N_1$.  

Finally, we know that $\Sigma(L,P)$ is leveled since $f$ it is globally cylindrical by construction.  Finally, the cobordism is orientable as long as all the data ($L$,$P$,$\lambda_\pm$,$\Pi_\pm$, $\Sigma(\Lambda_+,\Pi_+)$,$\Sigma(\Lambda_-,\Pi_-)$) are.
\end{proof}

The proof of Proposition~\ref{prop:main_geom} yields the tools necessary to prove Proposition~\ref{main_geom_nonorientable}.

\begin{proof}[Proof of Proposition \ref{main_geom_nonorientable}]
The first step in the proof is to add a negative boundary component to the original cobordism $L$ so that we may move the twists coming from the proof of Proposition~\ref{prop:main_geom} off of $\Lambda_-$.  To accomplish this, let $L_\wedge$ be a $\wedge$-handle cobordism from $\Lambda_- \cup \Upsilon$, where $\Upsilon$ is the maximal Legendrian unknot, to $\Lambda_-$.  We then pre-compose $L$ with $L_\wedge$ to yield a new cobordism $L_\wedge \odot L$ from $\Lambda_- \cup \Upsilon$ to $\Lambda_+$.

We can now apply Proposition \ref{prop:main_geom} to the cobordisms $L_{\wedge}\odot L$ and $P$, making sure to place  $2g(L)+1$ twists on the $\Upsilon$ component. The result is a cobordism \[\left(\Sigma(\Lambda_-,\Delta \Pi_-)\cup\Sigma(\Upsilon,\Delta^{2g(L)+1})\right)\prec\Sigma(\Lambda_+,\Delta \Pi_+).\] As long as we do not care about orientations, we know that $\Sigma(\Upsilon,\Delta^n)$ is Lagrangian fillable for any $n\geq 1$ (see Lemma \ref{lem:lifted-0h}). Adding this filling to the bottom of the cobordism yields the desired cobordism  
$\Sigma(\Lambda_-,\twist \Pi_-)\prec\Sigma(\Lambda_+,\twist \Pi_+)$.
\end{proof}

Note that an orientable Lagrangian filling of $\Sigma(\Upsilon, \twist^n)$ induces a uniform orientation on the boundary. This orientation agrees with the orientation on $L_\wedge \odot L$ only when $\Pi_-$  has a uniform orientation.

\section{Satellite construction for decomposable cobordisms}
\label{decomp}

The goal of this section is to construct \emph{decomposable} cobordisms between satellites. The idea of the construction is to start with a decomposable cobordism $\mathbf{L} = E_1 \odot \cdots \odot E_m$, where the $E_k$ are elementary cobordisms with $\Lambda_{k-1} \prec_{E_k} \Lambda_k$. At each level of the cobordism, we take the $n$-copy of the link $\Lambda_k$. We want to upgrade the 0- and 1- handles of $\mathbf{L}$ to diagrammatic moves between the new $n$-stranded diagrams.  The key is to use the isotopy depicted in Figure~\ref{fig:half-at-cusp} to enable the attachment of  $0$- and $1$- handles, and hence to obtain the upgraded moves depicted in Figures~\ref{fig:lifted-0h}, \ref{fig:lifted-1h-half}, and \ref{fig:lifted-1h-full}.

These diagrammatic moves require extra twists.  To keep track of the twists, we introduce a \textbf{twist function} (see Definition \ref{def:twist-fn}), which counts the number of twists we need at each level $k$ of the cobordism $\mathbf{L}$ for each connected component of the link $\Lambda_k$. We then have two problems to address:
\begin{enumerate}
    \item Given a twist function, what more do we need to be able to build a cobordism between satellites? This is answered in Theorem \ref{thm:decompTwist}.
    \item Do twist functions with the specific properties needed in Theorem \ref{thm:decompTwist} always exist? This is answered in Proposition \ref{prop:exists-tw-fn}.
\end{enumerate}
Finally, in Corollary \ref{cor:decompGen}, we list many situations in which we do indeed get decomposable cobordisms between satellites.  

\subsection{Satellites of elementary cobordisms}

We begin by building $n$-stranded analogs of 0-handles and 1-handles, which we will refer to as $\Sigma$-0-handles and $\Sigma$-1-handles. The isotopy shown in Figure~\ref{fig:half-at-cusp}, which is used in both lemmas, consists of $n$ Reidemeister II moves.

\begin{figure}
\begin{gather*}
\twist^{1/2}_s\hspace{16.67em}\\ 
\begin{gathered}\includegraphics[scale=\figscale]{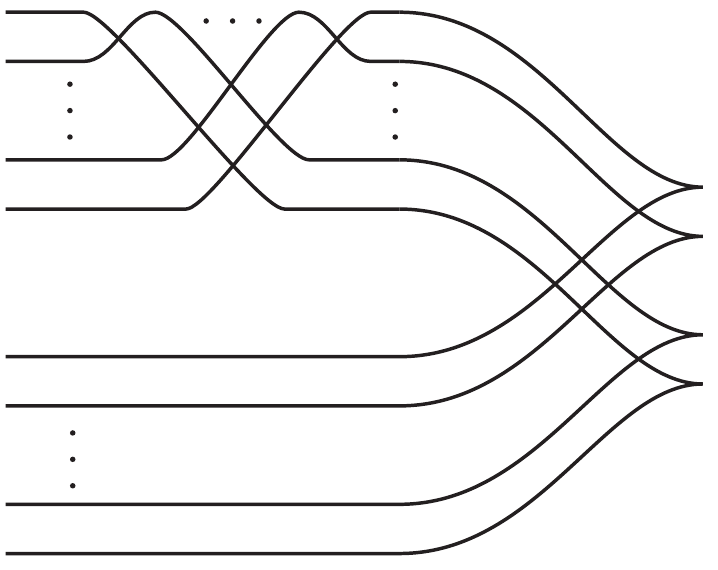}\end{gathered}
\hspace{0.67em}\simeq\hspace{0.67em}
\begin{gathered}\includegraphics[scale=\figscale]{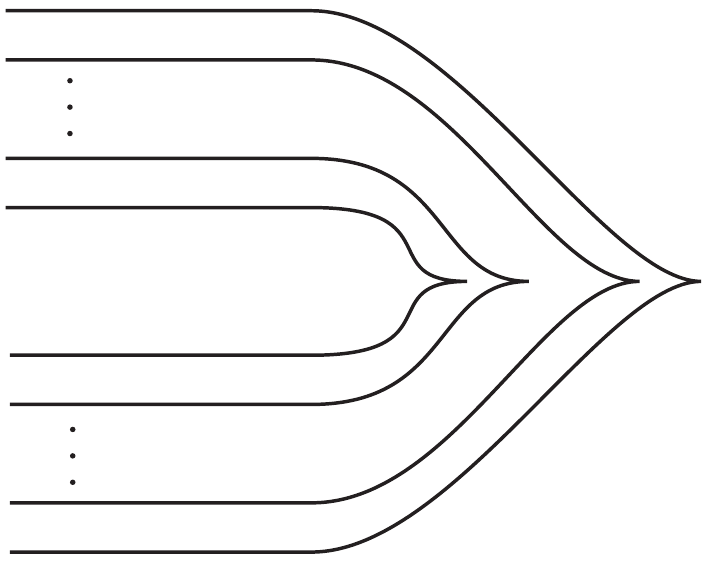}\end{gathered}
\end{gather*}
\caption{A key isotopy.}
\label{fig:half-at-cusp}
\end{figure}

\begin{lemma}[$\Sigma$-0-handle]\label{lem:lifted-0h}
For any $k \in \frac{1}{2}\bbN$, there exists an orientable decomposable cobordism $\emptyset \prec \lsat{}{\Upsilon}{\twist^{1+k}_s}$ if $k = 0$ or the orientation $s$ is uniform.
\end{lemma}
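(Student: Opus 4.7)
The plan is to argue by induction on $2k \in \bbN$. For the base case $k=0$, I aim to show that $\lsat{}{\Upsilon}{\twist_s}$ is Legendrian isotopic to the configuration $\Upsilon^{(n)}$ of $n$ concentrically nested max-$\tb$ Legendrian unknots, which is then filled decomposably by $n$ successive $0$-handle attachments starting from $\emptyset$, each inserting a new maximal unknot inside the previous one. The key isotopy: because the full twist preserves any orientation (not just symmetric ones), we may factor $\twist = \twist^{1/2}_s \cdot \twist^{1/2}_s$ for every $s$, and after a Legendrian isotopy slide one half-twist adjacent to each of the two cusps of $\Upsilon$ in the front projection of $\lsat{}{\Upsilon}{\twist_s}$. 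Applying the key isotopy of Figure~\ref{fig:half-at-cusp} at each cusp (each application is $n$ Reidemeister II moves) converts each ``half-twist plus cusp'' configuration into a stack of $n$ nested cusps, producing $\Upsilon^{(n)}$ with no crossings remaining. Precomposition with this isotopy gives the desired decomposable filling.

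For the inductive step, assume $k \geq 1/2$ and $s$ is uniform, and that a decomposable filling of $\lsat{}{\Upsilon}{\twist^{1+k-1/2}_s}$ has been constructed. I plan to build a decomposable cobordism
\[
\lsat{}{\Upsilon}{\twist^{1+k-1/2}_s} \prec \lsat{}{\Upsilon}{\twist^{1+k}_s}
\]
and compose it with the inductive filling. To construct it, I slide the extra $\twist^{1/2}_s$ factor appearing in the top satellite so that it sits adjacent to a cusp of $\Upsilon$; the key isotopy then identifies this configuration with a stack of $n$ nested cusps. Reading this identification in the opposite direction, the top satellite differs from the bottom satellite by the insertion of a single additional max-$\tb$ Legendrian unknot nested in the appropriate position near that cusp, which is precisely a single $0$-handle attachment. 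The uniformity of $s$ guarantees that the extra half-twist is compatible with the strand orientations on both sides of the insertion, so the $0$-handle is orientable and the whole elementary cobordism is orientable.

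The main obstacle lies in the local analysis required in the inductive step: one must verify that combining a correctly placed $0$-handle with the reverse application of the key isotopy precisely realizes the passage from $\twist^{1+k-1/2}_s$ to $\twist^{1+k}_s$ in the satellite, up to Legendrian isotopy. The orientability constraint is the delicate point --- a non-uniform orientation $s$ would force the inserted half-twist to reverse the orientation on some pair of strands, producing an inconsistent orientation on the $0$-handle disk and breaking the decomposable cobordism framework, which explains the uniformity hypothesis.
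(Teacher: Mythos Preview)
Your base case ($k=0$) is correct and coincides with the paper's argument: factor $\twist = \twist^{1/2}\cdot\twist^{1/2}$, slide one half-twist to each cusp of $\Upsilon$, apply the key isotopy of Figure~\ref{fig:half-at-cusp} at both cusps to reach the simplified diagram of Figure~\ref{fig:lifted-0h} with the trivial tangle $\twist^0 = \I_n$ in the middle, and fill the resulting $n$ disjoint maximal unknots by $n$ successive $0$-handles.

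The inductive step, however, is wrong. The key isotopy does not create a new component; it rearranges the same $n$ strands near a cusp. So $\lsat{}{\Upsilon}{\twist^{1+k}_s}$ is never obtained from $\lsat{}{\Upsilon}{\twist^{1/2+k}_s}$ by adjoining a disjoint maximal unknot. Concretely, the number of components of $\lsat{}{\Upsilon}{\twist^{m}_s}$ equals the number of cycles of the strand permutation induced by $\twist^{m}$, namely $n$ when $m\in\bbZ$ and $\lceil n/2\rceil$ when $m\in\bbZ+\tfrac12$. Already at the very first step of your induction ($k=\tfrac12$, passing from $\twist^1$ to $\twist^{3/2}$) the component count \emph{drops} from $n$ to $\lceil n/2\rceil$, whereas a $0$-handle would raise it by one. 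Your closing orientability discussion is likewise off target: a $0$-handle attaches a disk and is always orientable, so uniformity of $s$ cannot enter that way.

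The paper handles $k>0$ by a different, non-inductive mechanism. After the isotopy of Figure~\ref{fig:lifted-0h} the front consists of nested cusps on each side with the tangle $\twist^k_s$ in between; when $s$ is uniform every crossing of $\twist^k_s$ is positive, and the paper then invokes the general result \cite{positivity} that a Legendrian link admitting a front with only positive crossings has an orientable decomposable filling. This is where uniformity of $s$ actually enters. If you want to salvage an inductive argument, the correct elementary move for inserting a further half-twist is not a $0$-handle but a collection of oriented $1$-handles resolving the new positive crossings --- essentially unpacking the proof of the positivity result one crossing at a time.
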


\begin{figure}
\labellist
    \small
    \pinlabel $\twist^k_s$ [l] at 216  135 
    \pinlabel $\twist^k_s$ [l] at 216 -100 
\endlabellist
\centering
\includegraphics[scale=\figscale]{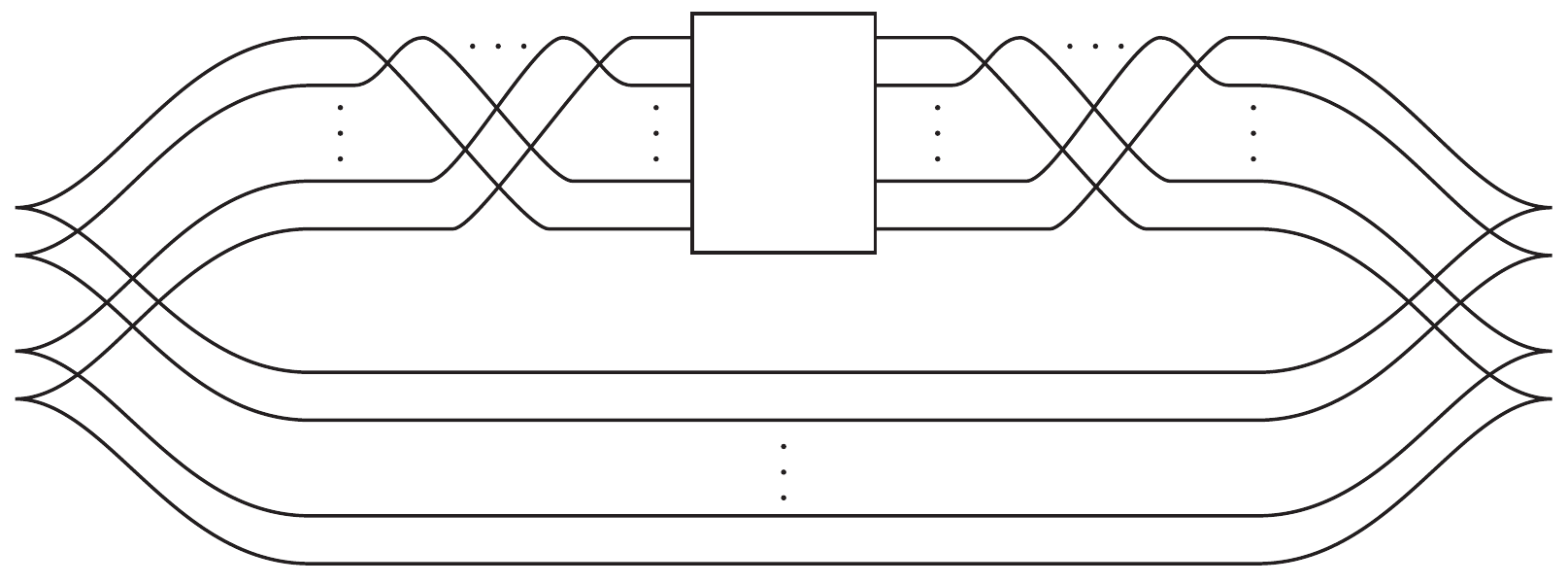} \\
\hspace{1pt}\rotatebox{-90}{$\simeq$}\vspace{12pt} \\
\includegraphics[scale=\figscale]{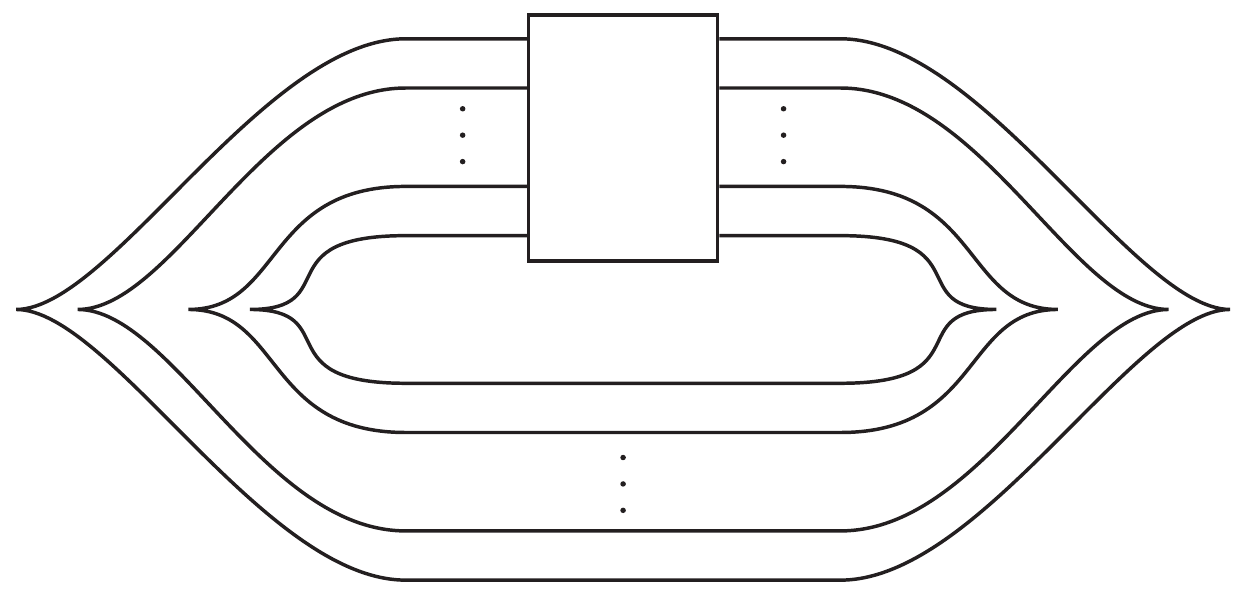}
\caption{Applying the isotopy in Figure~\ref{fig:half-at-cusp} to both sides of the canonical diagram of $\lsat{}{\Upsilon}{\twist^{1+k}_s}$.}
\label{fig:lifted-0h}
\end{figure}

\begin{proof}
Applying the isotopy in Figure~\ref{fig:half-at-cusp} to both sides of the  diagram of $\lsat{}{\Upsilon}{\twist^{1+k}_s}$ gives the isotopy shown in Figure~\ref{fig:lifted-0h}. When $k = 0$, the pattern $\twist^k_s$ is trivial, and so $\lsat{}{\Upsilon}{\twist^{1+k}_s}$ is isotopic to $n$ disjoint copies of the unknot. The desired cobordism $\emptyset \prec \lsat{}{\Upsilon}{\twist^{1+k}_s}$ is therefore comprised of $n$ 0-handles followed by the isotopy described above. 

When $k > 0$, if the orientation $s$ is uniform then $\twist^k_s$ contains only positive crossings. By \cite{positivity}, this means a decomposable cobordism from $\emptyset$ to this link exists, and so the desired cobordism $\emptyset \prec \lsat{}{\Upsilon}{\twist^{1+k}_s}$ is comprised of the aforementioned cobordism followed by the isotopy described above.
\end{proof}

\begin{lemma}[$\Sigma$-1-handle]\label{lem:lifted-1h-half}
There exists an orientable decomposable cobordism from the lower diagram of Figure~\ref{fig:lifted-1h-half} to the upper diagram of Figure~\ref{fig:lifted-1h-half} if the orientations on the left and right side  agree.
\end{lemma}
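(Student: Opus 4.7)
The plan is to mirror the strategy used in the proof of Lemma~\ref{lem:lifted-0h} for $\Sigma$-0-handles, replacing the two cusps of an unknot with the two cusp-like regions that meet at the 1-handle site of the satellite. The lower diagram of Figure~\ref{fig:lifted-1h-half} features two cusps each adjacent to a region containing a $\twist^{1/2}_s$ half-twist (reflecting the $n$-stranded satellite pattern near the site of a Legendrian 1-handle attachment in the base cobordism). My first step would be to apply the key isotopy of Figure~\ref{fig:half-at-cusp} to each of the two half-twist regions. Since this isotopy is just $n$ Reidemeister II moves, it is a Legendrian isotopy that preserves the orientation on every strand. After the isotopy, each cusp has been converted into $n$ horizontal strand tips facing $n$ horizontal strand tips coming from the other cusp, yielding $n$ pairs of opposing cusps arranged exactly as in the local model for a standard Legendrian 1-handle attachment from Theorem~\ref{thm:construct}.

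The second step is to attach $n$ orientable Legendrian 1-handles simultaneously, one between each opposing pair of strand tips. The hypothesis that the orientations on the left and the right agree is exactly what is needed so that, after the isotopy, the two strands in each of the $n$ pairs carry compatible co-oriented endpoints; this is precisely the condition required for each of the $n$ 1-handles to be orientable (cf.\ the discussion of $\wedge$- and $\vee$-handles in Section~\ref{cobordismsBkg}). By Theorem~\ref{thm:construct}, each such attachment contributes an elementary Lagrangian cobordism, and their concatenation is decomposable.

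Finally, I would apply the isotopy of Figure~\ref{fig:half-at-cusp} in reverse to recombine the resulting horizontal strands into the cusps that appear in the upper diagram of Figure~\ref{fig:lifted-1h-half}. Composing the opening isotopies on the lower end, the $n$ elementary 1-handle cobordisms, and the closing isotopies on the upper end then realizes the desired decomposable cobordism, and orientability follows because each elementary piece is orientable and the boundary orientations are compatible at every stage.

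The main technical point I expect is verifying that the orientation-agreement hypothesis of the lemma translates exactly to the condition that in each of the $n$ post-isotopy pairs the facing strands are cooriented so that an orientable 1-handle is possible. This reduces to tracing orientations through Figure~\ref{fig:half-at-cusp}, which is routine since Reidemeister II preserves the orientation of each strand. A secondary bookkeeping concern is checking that after the final reverse isotopy no extraneous crossings or twists remain beyond those that appear in the upper diagram; since the isotopy of Figure~\ref{fig:half-at-cusp} is its own reverse, this is automatic.
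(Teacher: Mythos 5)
Your proposal is correct and follows essentially the same route as the paper: apply the key isotopy of Figure~\ref{fig:half-at-cusp} to both half-twist regions of the lower diagram, then attach $n$ elementary $1$-handles, with the orientation-agreement hypothesis guaranteeing orientability of each handle. The extra closing isotopy you mention is harmless (isotopies are elementary cobordisms), so this matches the paper's two-step argument.
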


\begin{figure}
\labellist
    \small
    \pinlabel $s$             [l] at 452 -316 
    \pinlabel $s$             [r] at  -5 -316 
    \pinlabel $-\overline{s}$ [l] at 452 -414 
    \pinlabel $-\overline{s}$ [r] at  -5 -414 
    \pinlabel $s$             [l] at 452 140 
    \pinlabel $s$             [r] at  -5 140 
    \pinlabel $-\overline{s}$ [l] at 452  42 
    \pinlabel $-\overline{s}$ [r] at  -5  42 
\endlabellist
\centering
\includegraphics[scale=\figscale]{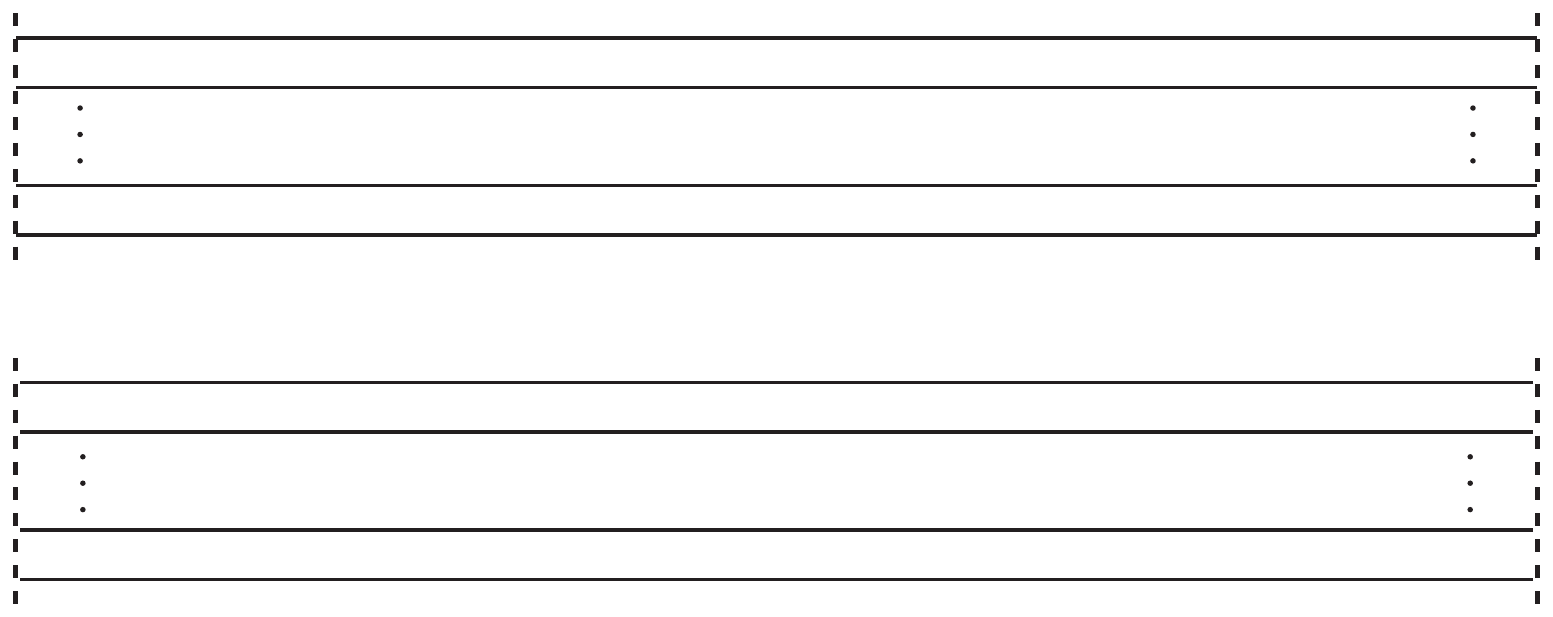} \\
\vspace{6pt}\hspace{1pt}$\uparrow$\vspace{3pt} \\
\includegraphics[scale=\figscale]{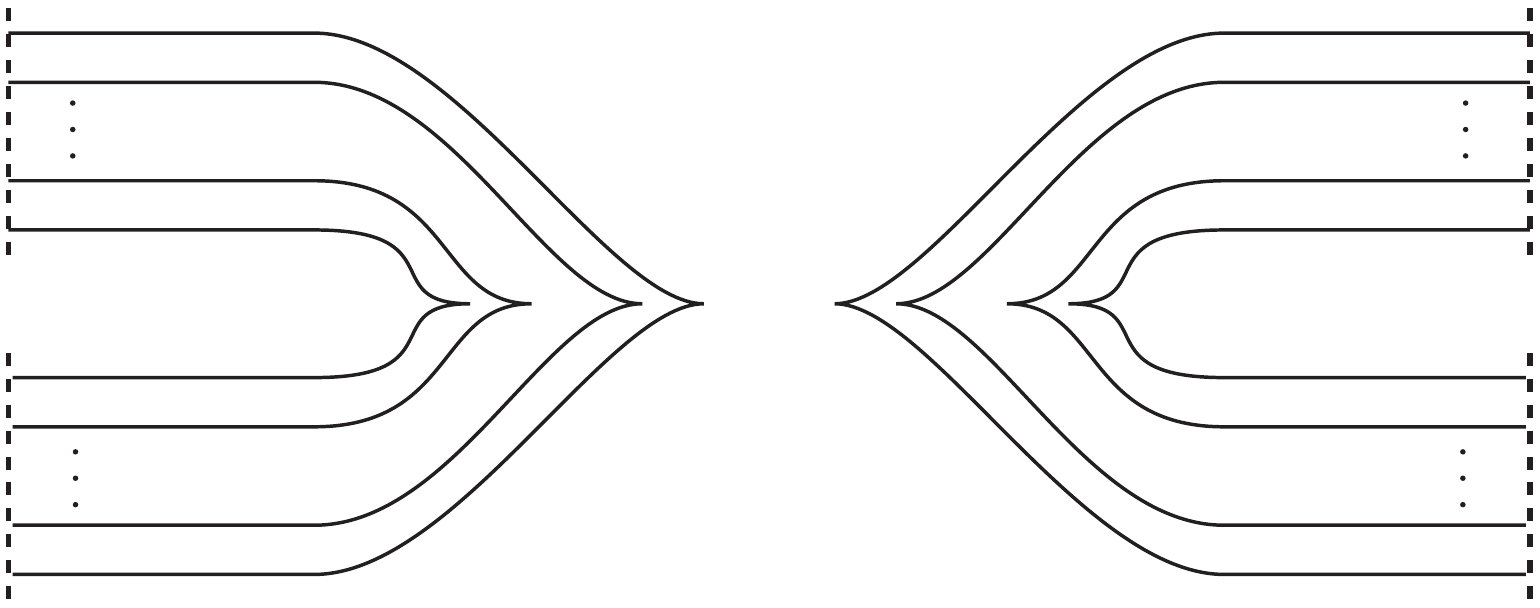} \\
\vspace{6pt}\hspace{1pt}\rotatebox{-90}{$\simeq$}\vspace{3pt} \\
\includegraphics[scale=\figscale]{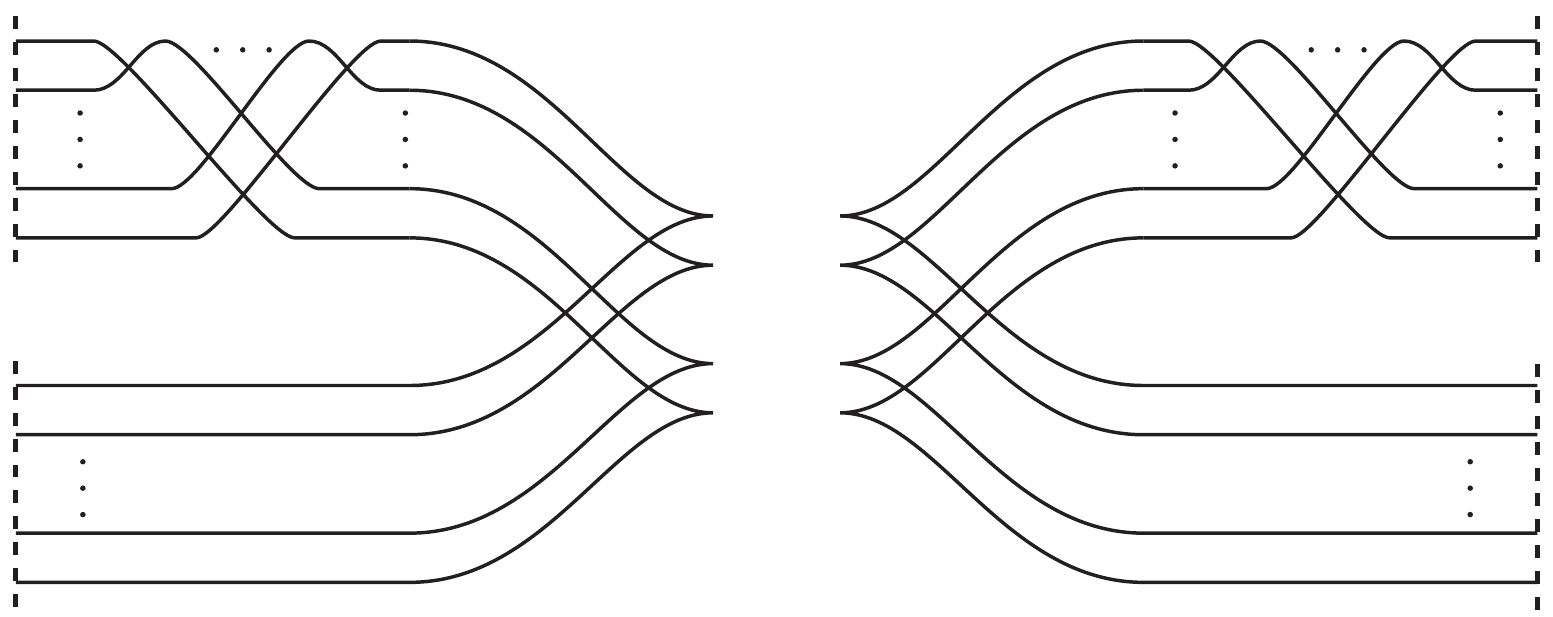}
\caption{The cobordism of Lemma~\ref{lem:lifted-1h-half}, where the induced orientations on the upper and lower $n$-strands at the boundaries must agree as specified.}
\label{fig:lifted-1h-half}
\end{figure}

\begin{proof}
Apply the isotopy in Figure~\ref{fig:half-at-cusp} to both sides of the lower diagram of Figure~\ref{fig:lifted-1h-half}. Then, successively apply $n$ 1-handles to the middle diagram to obtain the upper diagram of Figure~\ref{fig:lifted-1h-half}.
\end{proof}

Note that without the presence of more twists, the only time the lower and upper diagrams of Figure~\ref{fig:lifted-1h-half} can appear in the diagram of a satellite is if the orientation $s$ on the pattern is symmetric, and therefore $s = \overline{s}$. This is illustrated by Figure~\ref{fig:cusp-ex}.
To handle non-symmetric orientations $s$, we introduce more twists to obtain the full $\Sigma$-1-handle depicted in Figure \ref{fig:lifted-1h-full}.

\begin{figure}
\labellist
    \small
    \pinlabel $s$             [r] at -2 361 
    \pinlabel $-s$            [r] at -2 263 
    \pinlabel $s$             [r] at 226 361 
    \pinlabel $-\overline{s}$ [r] at 226 263 
    \pinlabel $s$             [r] at 74 138 
    \pinlabel $-s$            [r] at 74 40 
\endlabellist
\centering
\includegraphics[scale=\figscale]{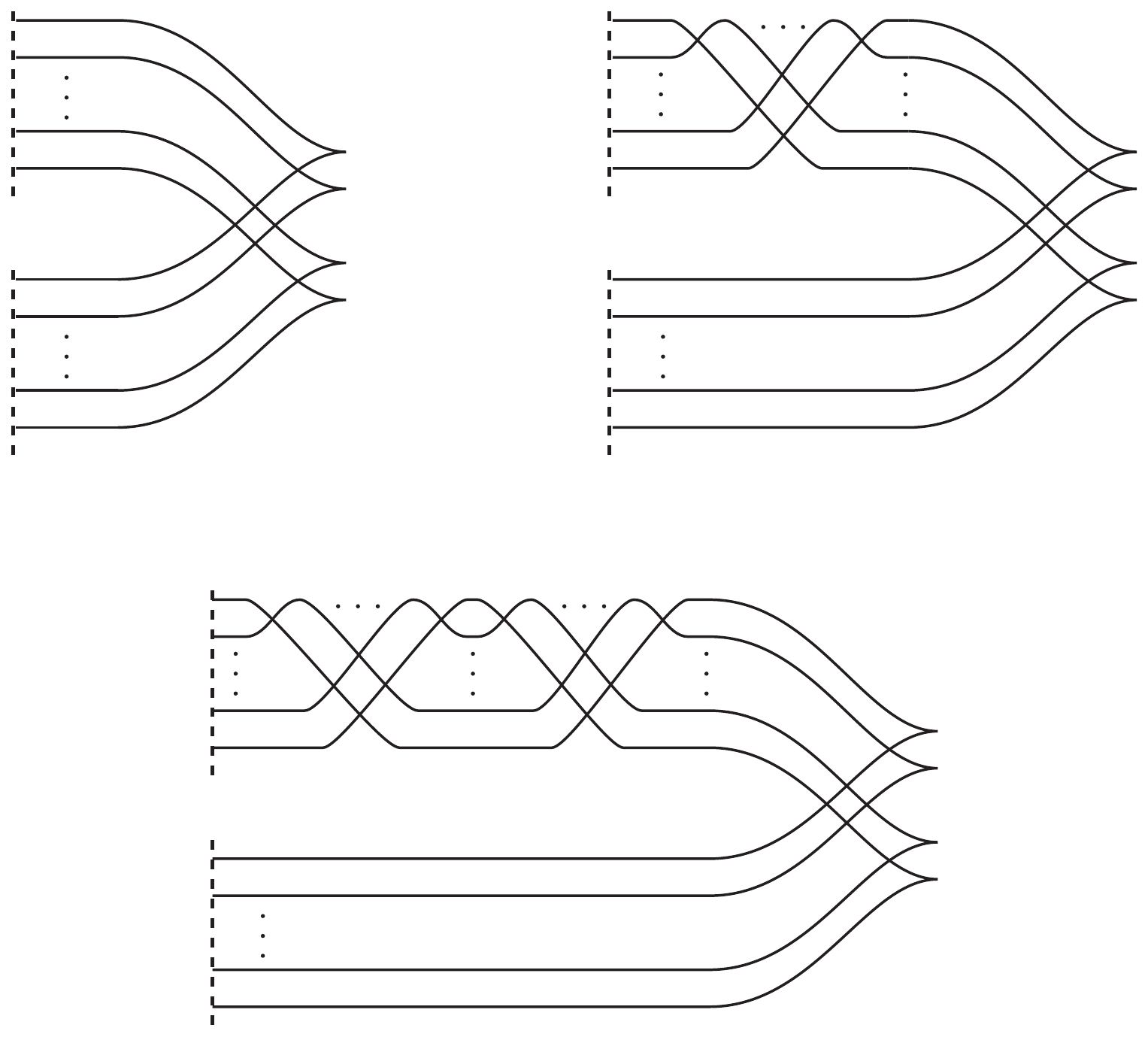}
\caption{Induced orientations at an $n$-stranded cusp with different numbers of twists. In the $\Delta^{1/2}$ case, the lower $n$ strands have the opposite orientation: $-\overline{s}$.}
\label{fig:cusp-ex}
\end{figure}

\subsection{Twist functions}

Given a decomposable cobordism $\Lambda_- \prec_{\mathbf{L}} \Lambda_+$, with $\mathbf{L} = E_1 \odot \cdots \odot E_k$ where each $E_k$ is an elementary cobordism $\Lambda_{k-1} \prec_{E_k} \Lambda_k$, let $\components(\Lambda_k)$ be the set of components of the link $\Lambda_k$, and $\components(\mathbf{L}) = \bigcup_k \components(\Lambda_k)$. Define
\[ O(\mathbf{L}) = \{ \Lambda \in \components(\mathbf{L}) : \Lambda \text{ is added by a 0-handle} \}. \] 

In order to capture when $\Sigma$-handles can be applied, we introduce the notion of a twist function $f : \components(\mathbf{L}) \to \halfN$, which counts how many twists (or half twists) we wish to add to each component. 

When it is clear from the context, we use the notation $\Lambda \prec_{E_k} \Lambda'$, where $\Lambda \subset \Lambda_{k-1}$ and $\Lambda' \subset \Lambda_k$, to indicate a connected component of the elementary cobordism $E_k$. Notice that if $E_k$ is a 0- or 1-handle, then for all but exactly one component, $\Lambda$ and $\Lambda'$ are both knots.
\newcommand{\conncmp}[3]{#1 \prec_{#3} #2}

\begin{definition}\label{def:twist-fn}
A \textbf{twist function} on $\mathbf{L}$ is a function $f : \components(\mathbf{L}) \to \halfN$ which satisfies the following conditions for every $k$.
\begin{enumerate}
\item\label{def:twist-fn:0h} If $E_k$ is a 0-handle with $\conncmp{\emptyset}{\Lambda}{E_k}$, then
\[ \begin{aligned} f(\Lambda) &\geq 1. \end{aligned} \]
\item\label{def:twist-fn:wedgeh} If $E_k$ is a $\wedge$-handle with $\conncmp{\Lambda'\cup\Lambda''}{\Lambda}{E_k}$, then
\[ \begin{aligned}
f(\Lambda') &\geq 1/2, & f(\Lambda'') &\geq 1/2, & f(\Lambda') + f(\Lambda'') &= f(\Lambda) + 1.
\end{aligned} \]
\item\label{def:twist-fn:veeh} If $E_k$ is a $\vee$-handle with $\conncmp{\Lambda}{\Lambda'\cup\Lambda''}{E_k}$, then 
\[ \begin{aligned}
f(\Lambda) &\geq 1, & f(\Lambda) &= f(\Lambda') + f(\Lambda'') + 1.
\end{aligned} \]
\item\label{def:twist-fn:final} If $\conncmp{\Lambda}{\Lambda'}{E_k}$ where $\Lambda,\Lambda'$ are knots, then
\[ \begin{aligned} f(\Lambda) &= f(\Lambda'). \end{aligned} \]
\end{enumerate}
A twist function is \textbf{full} when $f(\Lambda)$ is integer for all $\Lambda \in \components(\mathbf{L})$ and $f(\Lambda) \geq 2$ whenever $\Lambda$ is the base of a $\vee$-handle.
A twist function is \textbf{regular} if $f(\Lambda) = 1$ for every $\Lambda \in O(\mathbf{L})$.
\end{definition}

\subsection{From twist functions to cobordisms.}

We can use a twist function on $\mathbf{L}$ to build cobordisms of satellites.

\begin{figure}
\centering
\begin{tikzcd}[column sep=-9.5em, row sep=4em]
  \includegraphics[scale=0.35]{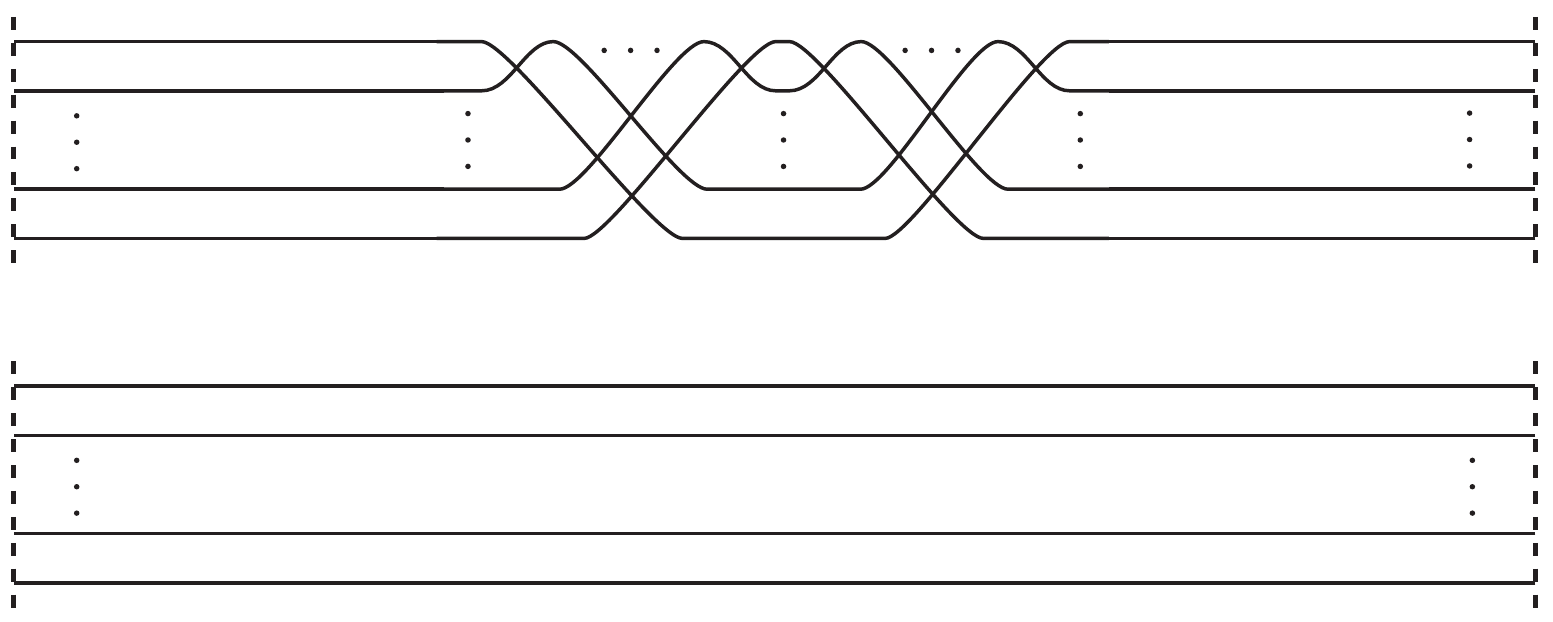}
&&\includegraphics[scale=0.35]{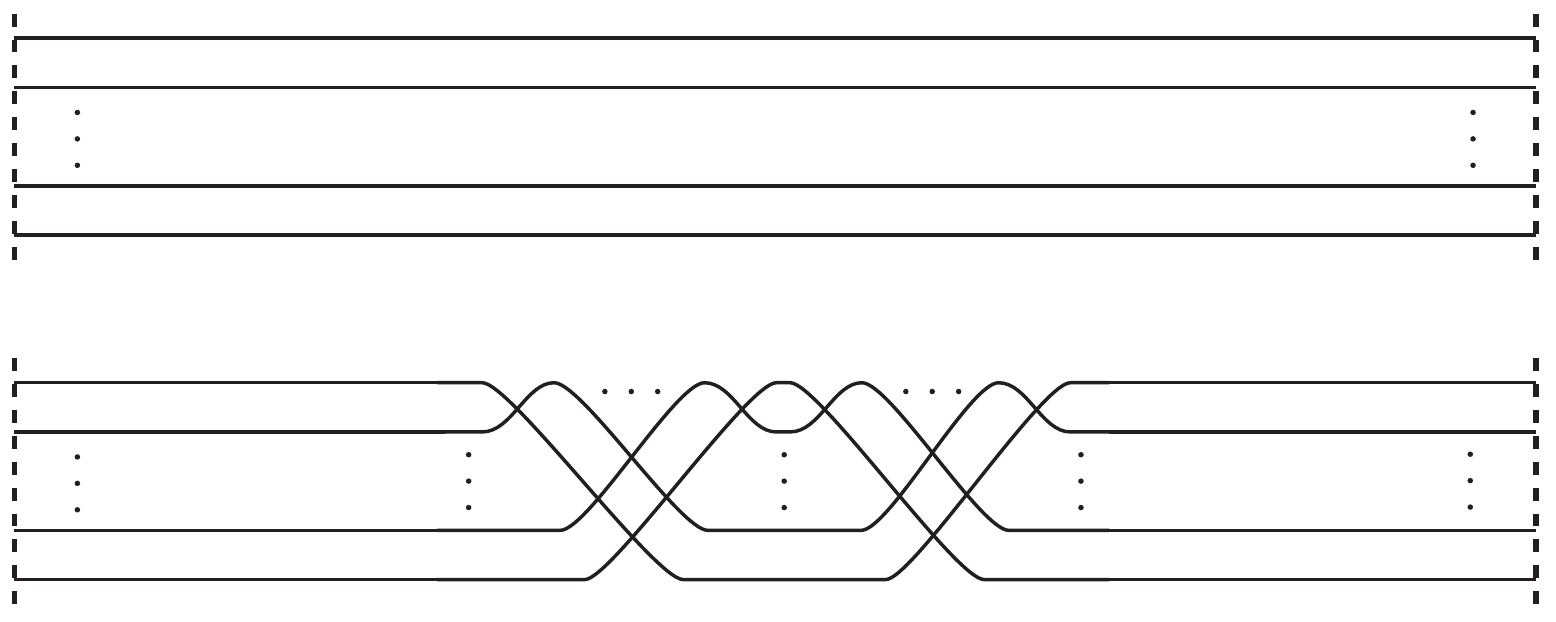}
\\
&&\includegraphics[scale=0.35]{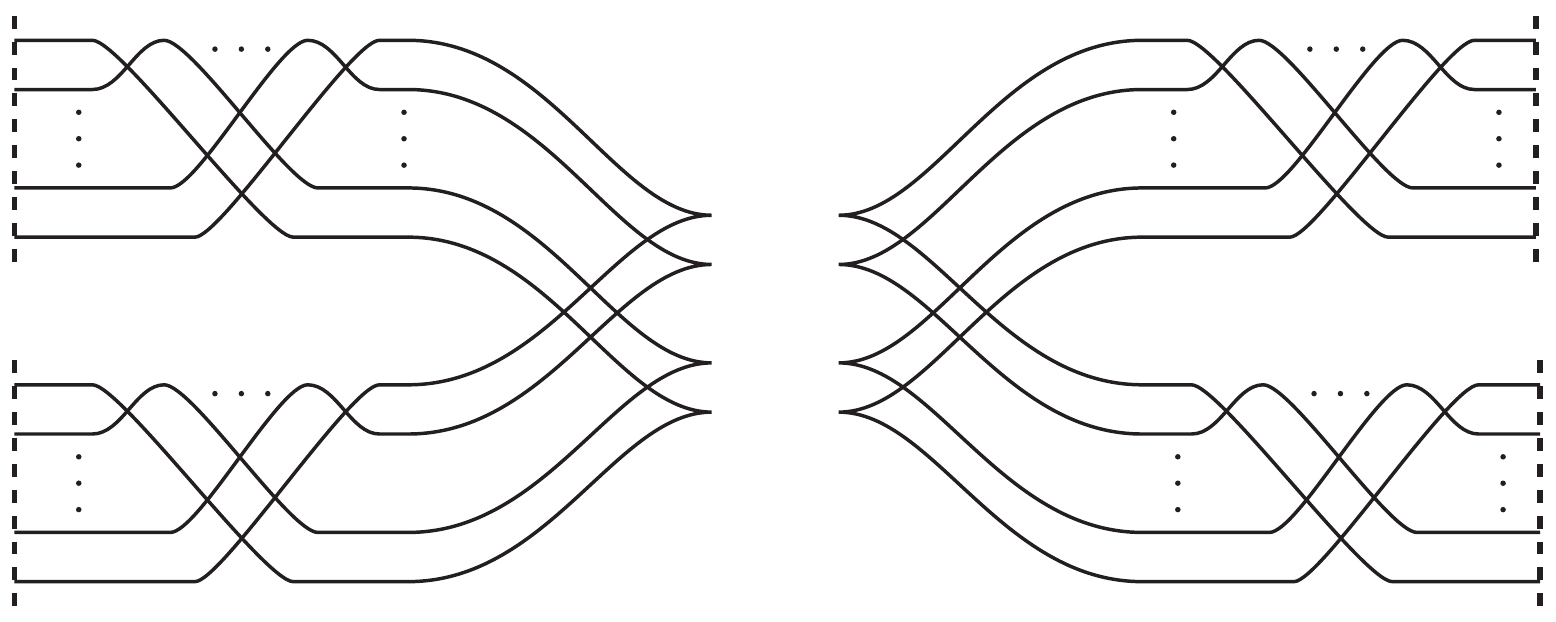}
  \arrow[u,"\text{$\Sigma$-1-handle}"]
\\
& \includegraphics[scale=0.35]{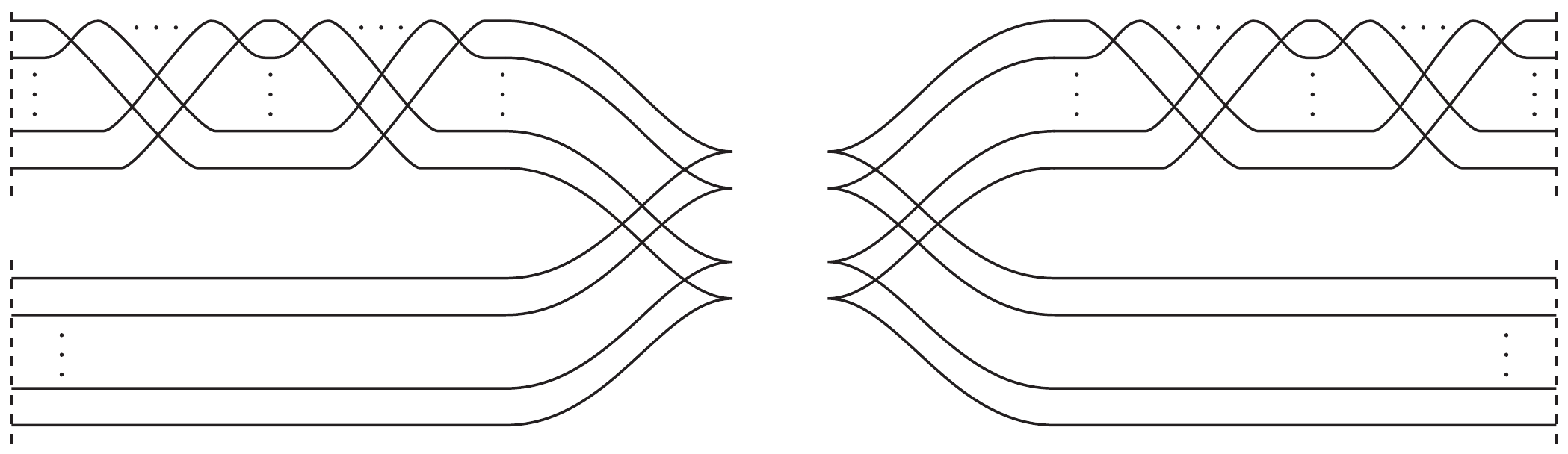}
  \arrow[uul,"\text{$\Sigma$-1-handle}"]
  \arrow[ur,"\text{Isotopy}"]
\end{tikzcd}
\caption{The lower diagram is cobordant to either of the upper diagrams.}
\label{fig:lifted-1h-full}
\end{figure}

\begin{theorem}\label{thm:decompTwist}
Suppose $\Lambda_\pm$ are knots and $\Pi_\pm$ are Legendrian $n$-tangles. Given decomposable cobordisms $\Lambda_-\prec_{\mathbf{L}} \Lambda_+$ and $\Pi_- \prec_{\mathbf{P}} \Pi_+$ as well as a twist function $f$ on $\mathbf{L}$, there exists a decomposable cobordism
\[ \lsat{}{\Lambda_-}{\twist^{f(\Lambda_-)}\Pi_-} \prec \lsat{}{\Lambda_+}{\twist^{f(\Lambda_+)}\Pi_+} \]
if any of the following conditions are met:
\begin{samepage}
\begin{enumerate}
\item $f$ is full and regular;
\item $f$ is regular and $\Pi_\pm$ have symmetric orientation; or 
\item $\Pi_\pm$ have uniform orientation.
\end{enumerate}
\end{samepage}
\end{theorem}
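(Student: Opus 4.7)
The plan is to induct on the length $m$ of the decomposition $\mathbf{L}=E_1\odot\cdots\odot E_m$, building a parallel decomposable cobordism between satellites one elementary piece at a time, with the tangle cobordism $\mathbf{P}$ inserted at the very top. After processing $E_1,\ldots,E_k$, I will have a decomposable cobordism from $\lsat{}{\Lambda_-}{\twist^{f(\Lambda_-)}\Pi_-}$ to a satellite of $\Lambda_k$ in which each component $\Lambda\in\components(\Lambda_k)$ carries the pattern $\twist^{f(\Lambda)}$, with one additional copy of $\Pi_-$ sitting on the ``tagged'' component descended from $\Lambda_-$. After reaching $\Lambda_+$, I will append $\mathbf{P}$ as a sequence of elementary cobordisms acting inside the $n$-tangle region on the tagged component, transforming $\Pi_-$ into $\Pi_+$; since elementary cobordisms of $n$-tangles are local (see Section~\ref{cobordismsBkg}), each such move extends to an elementary cobordism of the satellite.

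For each elementary cobordism $E_k$ in $\mathbf{L}$, I invoke a satellite-level analog. An isotopy lifts directly, with the twist-function condition $f(\Lambda)=f(\Lambda')$ of Definition~\ref{def:twist-fn}(\ref{def:twist-fn:final}) ensuring the twists match. A $0$-handle attaching a maximal unknot $\Upsilon$ is replaced by the $\Sigma$-$0$-handle of Lemma~\ref{lem:lifted-0h}, producing an orientable decomposable cobordism $\emptyset\prec\lsat{}{\Upsilon}{\twist^{f(\Upsilon)}_s}$; Definition~\ref{def:twist-fn}(\ref{def:twist-fn:0h}) supplies the needed $f(\Upsilon)\geq 1$. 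A $\wedge$- or $\vee$-handle is replaced by either the half-twist $\Sigma$-$1$-handle of Lemma~\ref{lem:lifted-1h-half} or the full-twist construction of Figure~\ref{fig:lifted-1h-full}, applied to the two bases of a $\wedge$-handle or read top-to-bottom for a $\vee$-handle. The arithmetic $f(\Lambda')+f(\Lambda'')=f(\Lambda)+1$ in Definition~\ref{def:twist-fn}(\ref{def:twist-fn:wedgeh})--(\ref{def:twist-fn:veeh}) accounts for the extra full twist produced by the $\Sigma$-$1$-handle move, and the fullness bound $f(\Lambda)\geq 2$ at each $\vee$-handle base ensures that enough twists remain after the move to supply both new components with their assigned twist counts.

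The main obstacle is orientation bookkeeping at the cusps within these $\Sigma$-handles. Lemmas~\ref{lem:lifted-0h} and~\ref{lem:lifted-1h-half} each impose orientation restrictions, and the three hypotheses of the theorem represent three distinct ways to meet them. In case (3), the uniformity of $\Pi_\pm$ propagates throughout $\mathbf{L}$ because elementary cobordisms of $n$-tangles preserve boundary orientations (Section~\ref{cobordismsBkg}); uniformity simultaneously supplies the uniformity needed by Lemma~\ref{lem:lifted-0h} for arbitrary $f(\Upsilon)\geq 1$ and the symmetry $s=\overline{s}$ needed by Lemma~\ref{lem:lifted-1h-half}, so half-integer $f$ is admissible. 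Case (2) drops uniformity in favor of symmetry plus regularity: regularity forces $f(\Upsilon)=1$ for every $0$-handle component, so Lemma~\ref{lem:lifted-0h} applies without an orientation constraint, while symmetry of $\Pi_\pm$ is preserved along $\mathbf{L}$ and is exactly the condition $s=\overline{s}$ required to match the induced orientations $s$ and $-\overline{s}$ at a cusp in Lemma~\ref{lem:lifted-1h-half} (see Figure~\ref{fig:cusp-ex}). Case (1) applies whenever $f$ is full and regular, and uses the full-twist $\Sigma$-$1$-handle of Figure~\ref{fig:lifted-1h-full} at each $1$-handle; the extra full twist compensates for any $s\leftrightarrow\overline{s}$ flip at the cusp, which is precisely why this construction demands integer twists and at least two of them at every $\vee$-handle base.
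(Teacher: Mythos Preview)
Your proposal is correct and follows essentially the same strategy as the paper: lift each elementary piece of $\mathbf{L}$ to a $\Sigma$-handle using Lemmas~\ref{lem:lifted-0h}, \ref{lem:lifted-1h-half}, and Figure~\ref{fig:lifted-1h-full}, with the twist-function axioms supplying exactly the bookkeeping needed, and then insert the tangle cobordism $\mathbf{P}$ locally at the end. The only cosmetic difference is that you carry $\Pi_-$ along a ``tagged'' component throughout the induction, whereas the paper first builds the cobordism of twisted $n$-copies and afterward inserts $\Pi_-$ along a vertical path $\gamma\subset\mathbf{L}$ from $\Lambda_-$ to $\Lambda_+$ chosen to avoid the $1$-handle attaching regions; your tagged component implicitly traces out such a path, so the two formulations are equivalent.
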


The proof of this theorem is illustrated on a concrete example in Figure~\ref{fig:ex-3-1}.  

We will need to work with \textbf{vertical paths}, i.e. paths in $\mathbf{L} = E_1 \odot \cdots \odot E_m$ that are monotone in the symplectization coordinate $t$. Of course, a vertical path intersects every level $\Lambda_k$ at most once. Notice that any vertical path in $\mathbf{L}$ can be extended downward until it terminates at some component in $\components(\Lambda_-) \cup O(\mathbf{L})$. Since there are no $2$-handles in a decomposable cobordism, the same is true for extending vertical paths upward.  

\begin{figure}
    \centering
    \begin{tikzcd}[column sep=0.1em, row sep=-2.25em]
    \reflectbox{\includegraphics[scale=0.45]{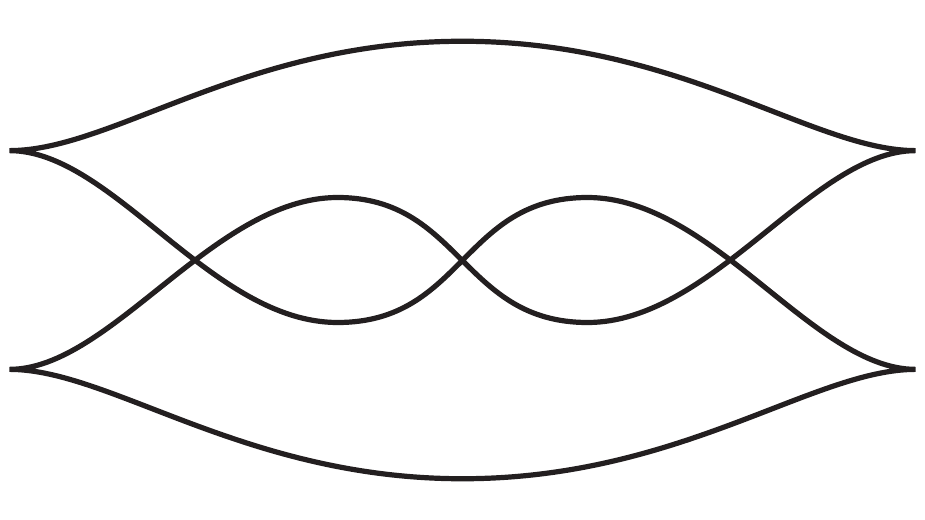}} &
    \begin{gathered} 1\\\vspace{71pt} \end{gathered}\hspace{1.5em} &
    \reflectbox{\includegraphics[scale=0.45]{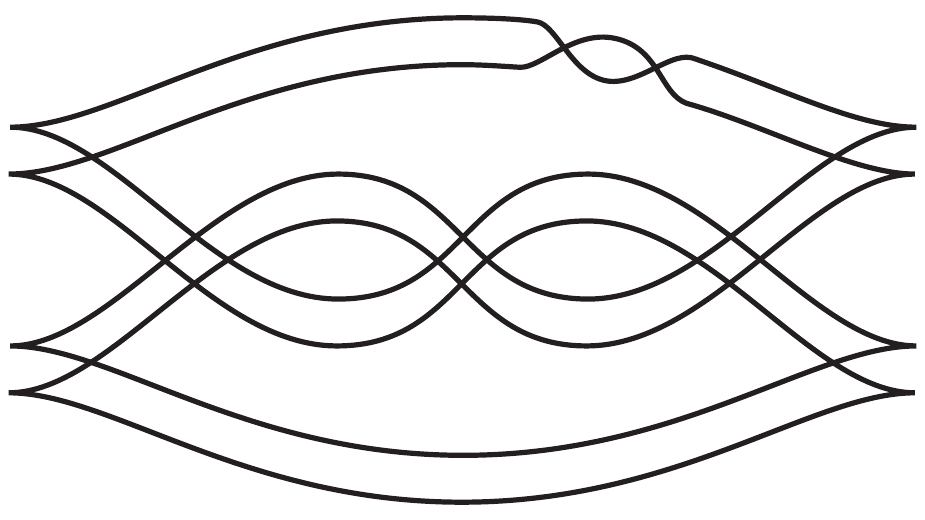}}
    \\
    \reflectbox{\includegraphics[scale=0.45]{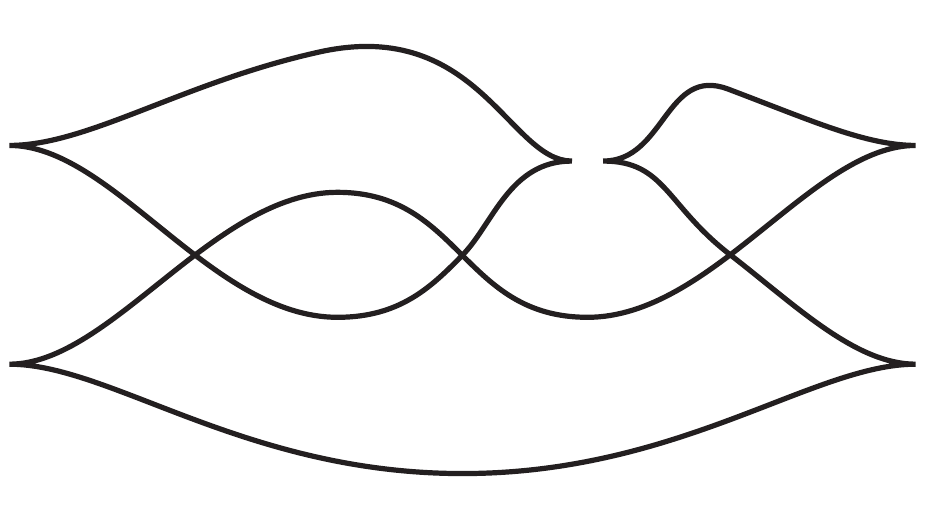}}
        \arrow[u,"\text{$\wedge$-handle}"'] &
    \begin{gathered} 1\vspace{13pt}\\1\\\vspace{45pt} \end{gathered}\hspace{1.5em} &
    \reflectbox{\includegraphics[scale=0.45]{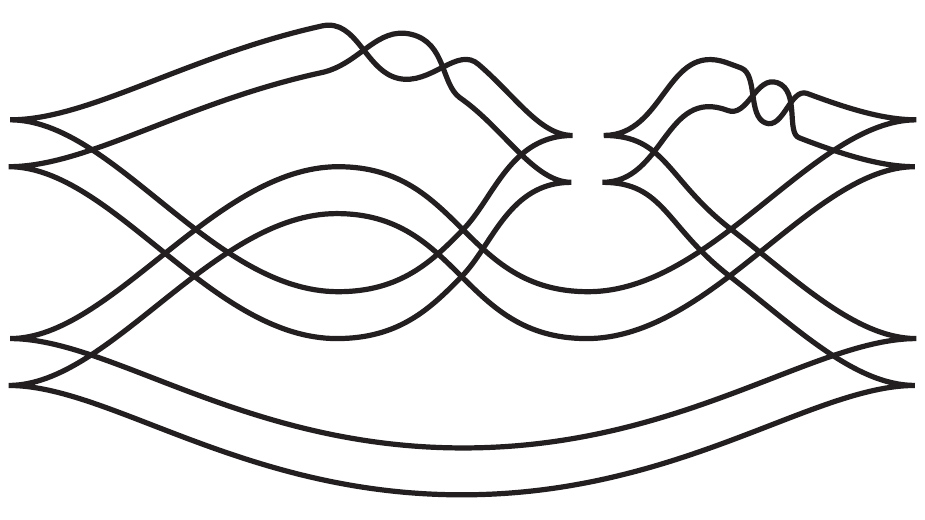}}
        \arrow[u,"\text{Fig.~\ref{fig:lifted-1h-full}}"']
    \\
    \reflectbox{\includegraphics[scale=0.45]{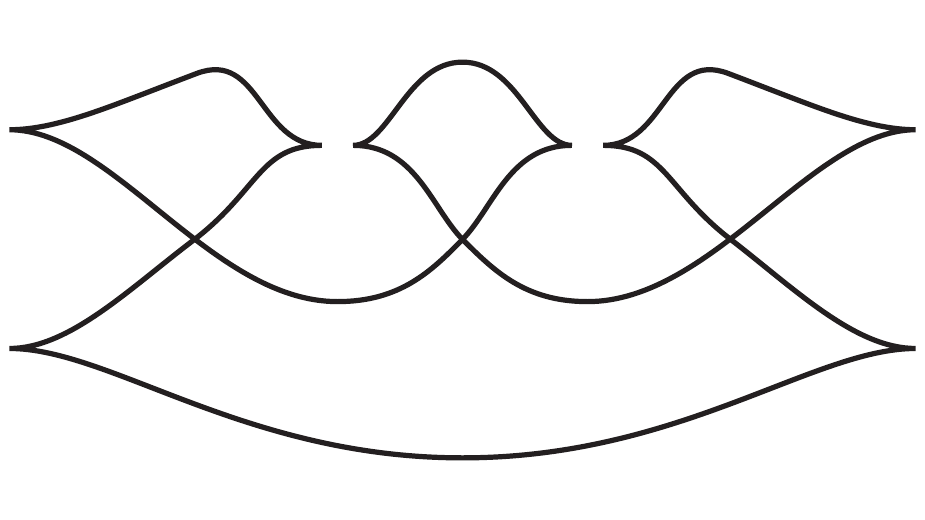}}
        \arrow[u,"\text{$\vee$-handle}"'] &
    \begin{gathered} 3\\\vspace{77pt} \end{gathered}\hspace{1.5em} &
    \reflectbox{\includegraphics[scale=0.45]{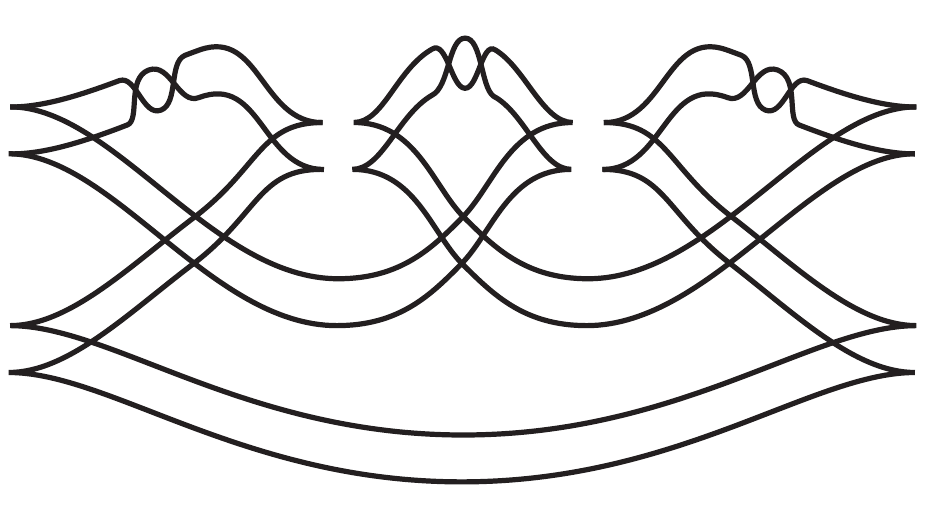}}
        \arrow[u,"\text{Fig.~\ref{fig:lifted-1h-full}}"']
    \\
    \reflectbox{\includegraphics[scale=0.45]{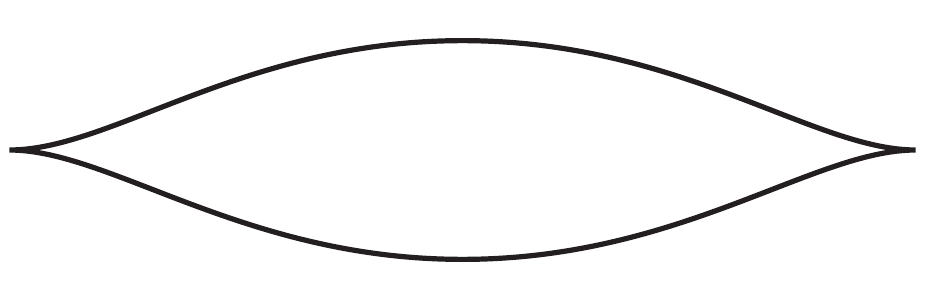}}
        \arrow[u,"\text{Isotopy}"'] &
    \begin{gathered} 3\\\vspace{14pt} \end{gathered}\hspace{1.5em} &
    \reflectbox{\includegraphics[scale=0.45]{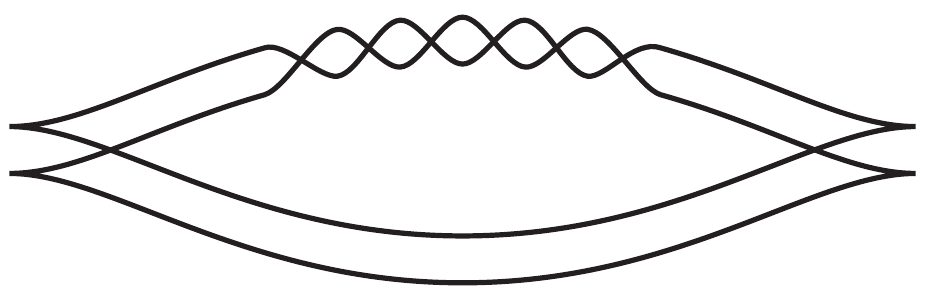}}
        \arrow[u,"\text{Isotopy}"']
    \\
    \end{tikzcd}
    \caption{On the left, a decomposable cobordism $\Upsilon \prec 3_1$ with the values of a full and regular twist function shown next to each component. On the right, Theorem~\ref{thm:decompTwist}~(1) applied to this twist function, resulting in a decomposable cobordism $\lsat{}{\Upsilon}{\twist^3} \prec \lsat{}{3_1}{\twist}$.}
    \label{fig:ex-3-1}
\end{figure}

\begin{proof}
Our first goal is to build a cobordism \[ \lsat{}{\Lambda_-}{\twist^{f(\Lambda_-)}_s} \prec_{\Sigma(\mathbf{L})} \lsat{}{\Lambda_+}{\twist^{f(\Lambda_+)}_s} \]
where $s$ agrees with the orientation of $\Pi_\pm$ at the boundary. To do this, we first take the $n$-copy of each $\Lambda \in \components(\mathbf{L})$. We then apply $f(\Lambda)$ twists to each  such  $n$-copy. Notice that an isotopy in ${\mathbf{L}}$ immediately yields an isotopy in the $n$-copy. 
For any $0$-handle in ${\mathbf{L}}$, Lemma \ref{lem:lifted-0h} ensures that we can get a $\Sigma$-$0$-handle if either $s$ is uniform or $f$ is regular.
For any $1$-handle in ${\mathbf{L}}$, we can obtain a $\Sigma$-$1$-handle if either $s$ is symmetric (in which case we use the diagram of Figure \ref{fig:lifted-1h-half}) or if $f$ is full (in which case we use the diagram of Figure \ref{fig:lifted-1h-full}).
This process yields a decomposable cobordism
\[ \lsat{}{\Lambda_-}{\twist^{f(\Lambda_-)}} \prec_{\Sigma(\mathbf{L})} \lsat{}{\Lambda_+}{\twist^{f(\Lambda_+)}}. \]

To obtain a cobordism
\[ \lsat{}{\Lambda_-}{\twist^{f(\Lambda_-)}\Pi_-} \prec_{\Sigma(\mathbf{L})} \lsat{}{\Lambda_+}{\twist^{f(\Lambda_+)}\Pi_-}, \]
we first choose a vertical path $\gamma\subset \mathbf{L}$ from $\Lambda_-$ to $\Lambda_+$ that does not pass through any $1$-handle where it is attached.  Next, place the tangle $\Pi_-$ into the $n$-copy of $\Lambda_k$ where $\Lambda_k$ intersects $\gamma$. Lastly, we attach the elementary cobordisms that make up $P$ in a small neighborhood around $\Pi_-$ in the $n$-copy of $\Lambda_+$ to obtain the desired cobordism.
\end{proof}

The following immediate corollary is useful for removing twists on $\Lambda_-$ in the case of uniform orientation. It may be thought of as a decomposable analogue of the technique used in the proof of Proposition~\ref{main_geom_nonorientable}.

\begin{corollary}\label{cor:remove-twists}
Given a Legendrian knot $\Lambda$ and a Legendrian $n$-tangle $\Pi$, if $\Pi$ has uniform orientation then for any $k \in \halfN$
there exists a cobordism
 \[ \lsat{}{\Lambda}{\twist^{1/2}\Pi} \prec
    \lsat{}{\Lambda}{\twist^{1/2+k}\Pi}. \]
\end{corollary}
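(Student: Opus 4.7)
The plan is to derive the corollary by a single application of Theorem~\ref{thm:decompTwist}(3) to a carefully chosen decomposable cobordism $\mathbf{L}$ from $\Lambda$ to itself, together with the trivial tangle cobordism $\mathbf{P}$ from $\Pi$ to $\Pi$. The idea is to introduce an auxiliary unknot whose sole purpose is to ``absorb'' the extra $k$ twists via the twist function, in the same spirit as the auxiliary $\Upsilon$ used in the proof of Proposition~\ref{main_geom_nonorientable}.

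For $\mathbf{L}$, I would take the composition of a $0$-handle attaching a disjoint maximal-tb Legendrian unknot $\Upsilon$ with a $\wedge$-handle that merges $\Lambda$ and $\Upsilon$, followed by a Legendrian isotopy. Since the Legendrian connect sum with the maximal-tb unknot is trivial, the resulting decomposable cobordism has both ends equal to $\Lambda$. The $\wedge$-handle can always be realized by choosing opposing cusps of $\Lambda$ and $\Upsilon$ after an isotopy placing $\Upsilon$ next to $\Lambda$.

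I would then define the twist function $f$ by $f(\Lambda_-) = 1/2$ on the bottom copy of $\Lambda$, $f(\Upsilon) = k+1$, and $f(\Lambda_+) = 1/2 + k$ on every component above the $\wedge$-handle. A direct check of Definition~\ref{def:twist-fn} confirms validity: the $0$-handle condition $f(\Upsilon) \geq 1$ holds because $k \in \halfN$; the $\wedge$-handle identity reads $f(\Lambda_-) + f(\Upsilon) = 1/2 + (k+1) = (1/2 + k) + 1 = f(\Lambda_+) + 1$ with both summands $\geq 1/2$; and the final isotopy condition $f(\Lambda) = f(\Lambda')$ is automatic.

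Since $\Pi$ has uniform orientation (so in particular $\Pi_\pm = \Pi$ have uniform orientation), hypothesis~(3) of Theorem~\ref{thm:decompTwist} applies, producing the decomposable cobordism
\[ \lsat{}{\Lambda}{\twist^{f(\Lambda_-)}\Pi} \prec \lsat{}{\Lambda}{\twist^{f(\Lambda_+)}\Pi}, \]
which is exactly the claim. The main point requiring care is that $f$ is not regular when $k > 0$ (so option~(2) is unavailable) and is not full when $k$ is a proper half-integer (so option~(1) fails as well); it is precisely the uniform-orientation hypothesis on $\Pi$ that lets the $\Sigma$-$0$-handle of Lemma~\ref{lem:lifted-0h} absorb the $k+1$ twists on $\Upsilon$ and the half-twist $\Sigma$-$1$-handle of Lemma~\ref{lem:lifted-1h-half} realize the $\wedge$-handle, since uniform orientation on $\Pi$ implies symmetric orientation.
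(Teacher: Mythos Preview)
Your proof is correct and follows exactly the paper's approach: build $\mathbf{L}$ as a $0$-handle adding $\Upsilon$ followed by a $\wedge$-handle joining $\Lambda$ and $\Upsilon$, then apply Theorem~\ref{thm:decompTwist}(3) with a suitable twist function. Your value $f(\Upsilon)=k+1$ is in fact the right one --- the paper records $f(\Upsilon)=3/2+k$, which fails the $\wedge$-handle identity $f(\Lambda)+f(\Upsilon)=f(\Lambda')+1$, so your computation corrects a small arithmetic slip there.
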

\begin{proof}
Let $\mathbf{L} = E_1 \odot E_2$ where $\Lambda \prec_{E_1} \Lambda \cup \Upsilon$ is a 0-handle and $\Lambda \cup \Upsilon \prec_{E_2} \Lambda'$ is a $\wedge$-handle trivially joining $\Lambda$ and $\Upsilon$, resulting in a knot $\Lambda'$ isotopic to $\Lambda$. Apply Theorem~\ref{thm:decompTwist}~(3) to the twist function $f$ on $\mathbf{L}$ given by $f(\Lambda) = 1/2$, $f(\Upsilon) = 3/2+k$, and $f(\Lambda') = 1/2+k$.
\end{proof}

\subsection{Existence of twist functions.}
\label{ssec:exists-twist}

While twist functions always exist, regular ones may not.  In order to build regular twist functions for a decomposable cobordism $\mathbf{L}$, we use Property A, which we recall here using language developed in this section.

\begin{definition}[Property A, Rewritten]
A decomposable cobordism $\mathbf{L}$ satisfies \allbf{Property A} if, for every $\vee$-handle $E_k$ in $\mathbf{L}$,
there exists a downward vertical path in $\mathbf{L}$ which starts at $\base(E_k)$ and ends at some component of $\Lambda_-$.
\end{definition}

If $\mathbf{L}$ contains no $\vee$-handles or no $0$-handles, then $\mathbf{L}$ trivially satisfies Property A. In particular, it is easy to check that $\mathbf{L}$ is a concordance if and only if there are no $\vee$-handles, so in this case Property A holds. As mentioned in the introduction, in many cases, one can deform $\mathbf{L}$ to ensure that it satisfies Property A (cf.\ Figure~\ref{fig:no-property-D}), but the authors are unsure if this can  be done for any cobordism between knots.

\begin{figure}
    \centering
    \begin{gather*}
    \begin{tikzcd}[column sep=0.1em, row sep=2.25em]
    \includegraphics[scale=0.45]{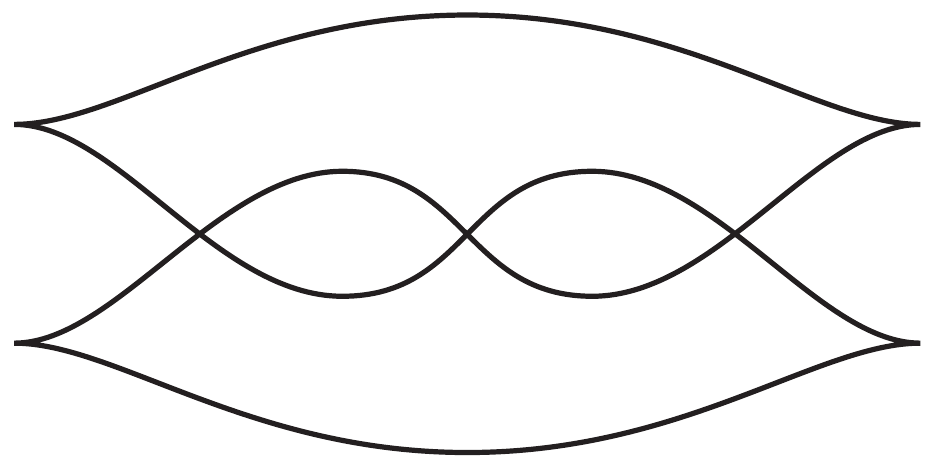} \\
    \includegraphics[scale=0.45]{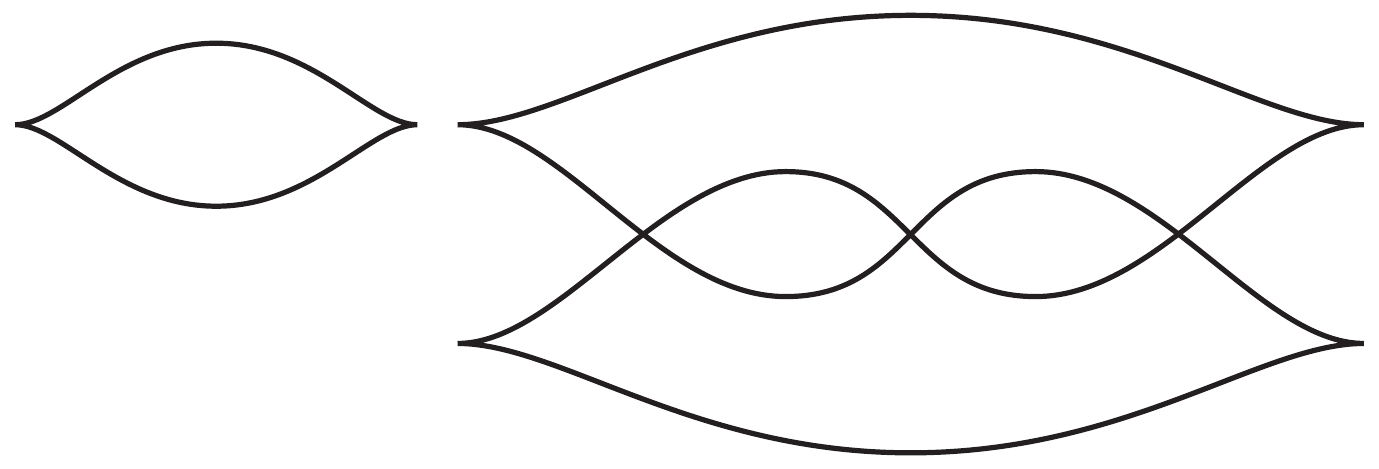}
        \arrow[u,"\substack{\text{Isotopy\,}\\\text{$\wedge$-handle}}"'] \\
    \includegraphics[scale=0.45]{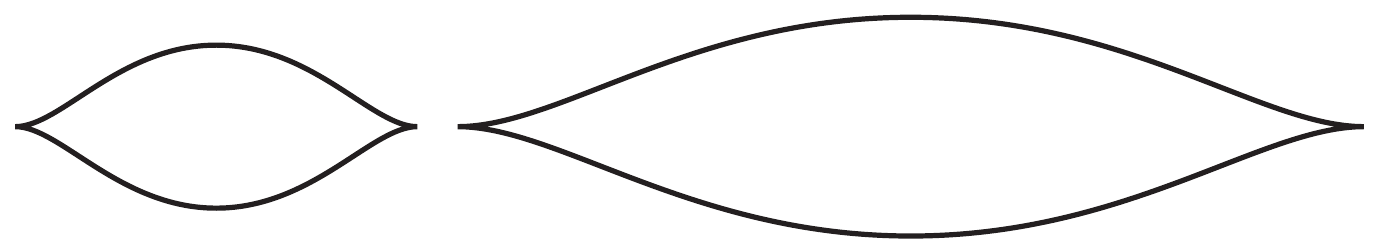}
        \arrow[u,"\text{Fig.~\ref{fig:ex-3-1}}"'] \\
    \includegraphics[scale=0.45]{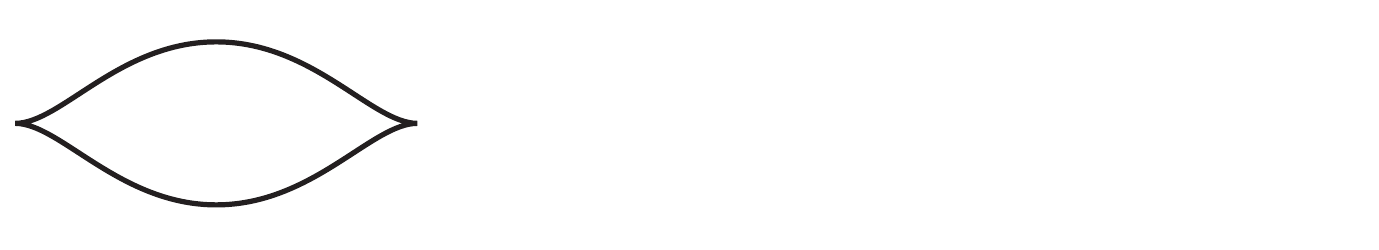}
        \arrow[u,"\text{0-handle}"']
    \end{tikzcd} 
    \end{gather*}
    \caption{A decomposable cobordism $\Upsilon \prec 3_1$ which does not satisfy Property A. This cobordism is a deformation of the decomposable cobordism $\Upsilon \prec 3_1$ shown on the left of Figure~\ref{fig:ex-3-1}, which does satisfy Property A.}
    \label{fig:no-property-D}
\end{figure}

We are now ready to build some twist functions. See Figures~\ref{fig:ex-3-1} and \ref{fig:ex-9-46} for examples of this construction in action.

\begin{proposition}\label{prop:exists-tw-fn}
For a decomposable cobordism of knots $\Lambda_-\prec_\mathbf{L}\Lambda_+$,
\begin{enumerate}
    \item there exists a full twist function $f$ on $\mathbf{L}$ with $f(\Lambda_+) = 1$, and
    \item there exists a (non-full) twist function $f$ on $\mathbf{L}$ with $f(\Lambda_+) = 1/2$.
\end{enumerate}
Moreover, the twist function in either case is regular if and only if $\mathbf{L}$ satisfies Property A, in which case $f(\Lambda_-) = f(\Lambda_+) + 2g(\mathbf{L}).$
\end{proposition}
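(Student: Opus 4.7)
The plan is to define $f$ by propagating downward from $f(\Lambda_+)$ through the decomposition $\mathbf{L} = E_1 \odot \cdots \odot E_m$. Isotopies preserve $f$; at each $\vee$-handle going down (a merge), $f$ on the base is forced by $f(\Lambda) = f(\Lambda') + f(\Lambda'') + 1$; at each $\wedge$-handle going down (a split), I have freedom to choose any $f(\Lambda'), f(\Lambda'') \geq 1/2$ (or $\geq 1$ integer in the full case) with $f(\Lambda') + f(\Lambda'') = f(\Lambda) + 1$, and I will use this freedom to enforce regularity whenever possible.

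For the existence claim in part (1), initialize $f(\Lambda_+) = 1$ and use the split rule $(1, f(\Lambda))$ at every $\wedge$-handle. This keeps $f$ a positive integer throughout, so the $0$-handle constraint $f \geq 1$ is automatic, and every $\vee$-handle base inherits $f \geq 3 \geq 2$ from the merge formula. For part (2), initialize $f(\Lambda_+) = 1/2$ and use the same $(1, f(\Lambda))$ rule, but route the value $1$ to whichever child is going to die at a $0$-handle without first passing through a $\vee$-handle base; any $0$-handle component whose upward lineage does include a $\vee$-handle base automatically gets $f \geq 2$ from the $\vee$-merge formula, while every other $0$-handle component can be assigned $f = 1$ through its generating $\wedge$-split, so the $0$-handle constraint $f \geq 1$ holds in either situation.

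The substantive content, and the main technical step, is the equivalence ``regular if and only if Property A.'' I will label each component $\Lambda$ at each level as type $L$ if some monotone downward vertical path in $\mathbf{L}$ from $\Lambda$ terminates in $\Lambda_-$, and type $O$ otherwise; Property A is then equivalent to the statement that no $\vee$-handle base is type $O$. Under Property A the $O$-region of $\mathbf{L}$ contains only $0$-handles and $\wedge$-handles and decomposes into $\wedge$-trees rooted where $O$ meets $L$, so I refine the split rule: at a $\wedge$-split where the two children have different labels, set $f(O\text{-side}) = 1$ and $f(L\text{-side}) = f(\Lambda)$; within each $O$-tree the split $(1,1)$ propagates $f = 1$ down to every $0$-handle leaf. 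Conversely, suppose $f$ is regular and some $\vee$-handle base $\Lambda_{\text{base}}$ at level $j-1$ is type $O$; let $\mathbf{L}^{\text{base}}$ be the subsurface of downward trajectories from $\Lambda_{\text{base}}$, a genus-$g^{\ast}$ surface with single boundary component $\Lambda_{\text{base}}$ and $0$-handles as interior critical points, so $\chi(\mathbf{L}^{\text{base}}) = 1 - 2g^{\ast}$. A level-by-level sum $F(k) = \sum_{\Lambda} f(\Lambda)$ over descendants of $\Lambda_{\text{base}}$ changes by $+1$ at each $1$-handle inside $\mathbf{L}^{\text{base}}$ and by $-1$ at each $0$-handle (by regularity), and comparing $F(j-1) = f(\Lambda_{\text{base}})$ with $F(0) = 0$ (no descendants in $\Lambda_-$ since $\Lambda_{\text{base}}$ is type $O$) yields $f(\Lambda_{\text{base}}) = \chi(\mathbf{L}^{\text{base}}) = 1 - 2g^{\ast} \leq 1$, contradicting $f(\Lambda_{\text{base}}) \geq 2$ from the $\vee$-merge formula. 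The same counting applied globally to $\mathbf{L}$ gives the final genus formula $f(\Lambda_-) - f(\Lambda_+) = -\chi(\mathbf{L}) = 2g(\mathbf{L})$.
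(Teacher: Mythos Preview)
Your top-down propagation is a genuinely different organization from the paper's. The paper builds $f$ bottom-up by induction on $k$, maintaining the clean invariant $f\equiv 1$ on all of $\Lambda_k$: each time a $\vee$-handle $E_{k+1}$ appears, it chooses a downward vertical path $\gamma$ from $\base(E_{k+1})$ and adds $2$ to $f$ along every component meeting $\gamma$. Property~A is then exactly the condition that $\gamma$ can always be chosen to terminate in $\Lambda_-$ rather than at a $0$-handle, so regularity comes for free. For part~(2) the paper does not rerun the construction with seed $\tfrac12$; it simply takes the full $f_1$ from part~(1) and subtracts $\tfrac12$ along a single vertical path from $\Lambda_-$ to $\Lambda_+$, which makes ``$f$ regular $\Leftrightarrow f_1$ regular'' immediate. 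Your routing arguments reach the same conclusions but with more bookkeeping.

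Your converse argument (regular $\Rightarrow$ Property~A) has a real gap. The set $\mathbf{L}^{\text{base}}$ of downward descendants of a type-$O$ base $\Lambda_{\text{base}}$ need not be a subsurface with single boundary component $\Lambda_{\text{base}}$: a $\vee$-handle $E_i$ below level $j-1$ can have its base among the descendants while only \emph{one} of its two outputs is reachable from $\Lambda_{\text{base}}$ by a monotone downward path. Across such a ``half-$\vee$-handle'' your sum $F$ changes by $f(\Lambda'')+1$, not by $+1$, so the asserted identity $f(\Lambda_{\text{base}})=\chi(\mathbf{L}^{\text{base}})=1-2g^\ast$ fails. The counting can be repaired --- combining the $F$-count with a component-number count still yields $f(\Lambda_{\text{base}})\le 1$ --- but the paper's argument is far simpler and avoids the issue entirely: from any $\vee$-base one follows a downward path, at each $\wedge$-split choosing the child with the larger $f$-value; since $f$ is integral and $f(\Lambda')+f(\Lambda'')=f(\Lambda)+1$, that child has $f\ge 2$ whenever $f(\Lambda)\ge 2$, so the path can never terminate at a $0$-handle (where regularity forces $f=1$) and must reach $\Lambda_-$.
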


\begin{proof}
To prove part (1), we proceed by induction on the number of elementary cobordisms in $\mathbf{L}=E_1 \odot \cdots \odot E_n$. In particular, we prove that there exists a full twist function $f_k$ on $\mathbf{L}_k=E_1 \odot \cdots \odot E_k$ such that $f(\Lambda) = 1$ for every $\Lambda \in \components(\Lambda_k)$. The base case of $k = 0$ is trivial, so suppose there exists a full twist function $f_k$ on $\mathbf{L}_k$ such that $f(\Lambda) = 1$ for every $\Lambda \in \components(\Lambda_k)$. We want to show that there exists a full twist function $f_{k+1}$ on $\mathbf{L}_{k+1}$ such that $f(\Lambda) = 1$ for every $\Lambda \in \components(\Lambda_{k+1})$.
If $E_{k+1}$ is an isotopy, a 0-handle, or a $\wedge$-handle, set $f_{k+1}(\Lambda) = 1$ for every $\Lambda \in \components(\Lambda_{k+1})$ and $f_{k+1}(\Lambda) = f_k(\Lambda)$ otherwise. 
If $E_{k+1}$ is a $\vee$-handle, choose a downward vertical path $\gamma$ starting at the base of $E_{k+1}$. Define $f_{k+1}(\Lambda) = 1$ for every $\Lambda \in \components(\Lambda_{k+1})$, $f_{k+1}(\Lambda) = f_k(\Lambda) + 2$ for every $\Lambda \in \components(\mathbf{L}_k)$ which intersects $\gamma$, and $f_{k+1}(\Lambda) = f_k(\Lambda)$ otherwise. One can check that such  $f_{k+1}$ is still a full twist function, completing the inductive step.

Notice that if $\mathbf{L}$ satisfies Property A, we can make sure that the chosen path $\gamma$ always terminates at $\Lambda_-$. This ensures that the resulting $f$ is regular.
Conversely, assume that we have built a full, regular $f$. For any $\vee$-handle we have $f(\base(E_k)) \geq 2$. There must exist a downward path $\gamma$, starting at $\base(E_k)$ such that, for any $\Lambda \in \components(\mathbf{L})$ intersecting $\gamma$, $f(\Lambda) \geq f(\base(E_k)) \geq 2$. Since $f$ is regular, $\gamma$ must terminate at $\Lambda_-$. So $\mathbf{L}$ satisfies Property A.

To prove part (2), let $f_1$ be the full twist function built in part (1). Consider an upward vertical path $\gamma\subset\mathbf{L}$ from $\Lambda_-$ to $\Lambda_+$.  Define $f(\Lambda) = f_1(\Lambda) - 1/2$ for every $\Lambda \in\components(\mathbf{L})$ which intersects $\gamma$ and $f(\Lambda) = f_1(\Lambda)$ otherwise. It is straightforward to verify that this $f$ is indeed a twist function. Furthermore, this $f$ is regular if and only if $f_1$ is regular.

The final claim follows from the fact that $2g(\mathbf{L})$ may be calculated as the difference between the number of $1$-handles and the number of $0$-handles.
\end{proof}

\begin{remark}\label{rmk:full-f>1}
Let $\Lambda_-\prec_\mathbf{L}\Lambda_+$ be a cobordism between knots which contains at least one $0$- or $1$-handle. Then $\mathbf{L}$ contains at least one $\wedge$-handle. It follows that for any full twist function $f$ on $\mathbf{L}$, we have $f(\Lambda_+) \geq 1$.
\end{remark}

For general (non-full) twist functions on cobordisms between knots, we can sometimes, but not always, ensure that $f(\Lambda_+)=0$ in parallel to the construction of satellites of concordances in \cite{cns:obstr-concordance}.

\begin{example}\label{ex:zero-at-top}
If $\mathbf{L}$ is a cobordism between knots which contains no 0-handles, then there exists a (regular, but non-full) twist function $f$ on $\mathbf{L}$ with $f(\Lambda_+) = 0$. The construction is very similar to that in Proposition~\ref{prop:exists-tw-fn}~(1) with the following main modifications. In the case of a $\wedge$-handle $E_k$, pick two downwards vertical paths terminating at $\Lambda_-$, $\gamma_1$ starting at $\base_1(E_k)$ and $\gamma_2$ starting at $\base_2(E_k)$, and define $f_{k+1}(\Lambda) = f_k(\Lambda) + 1$ for every $\Lambda \in \components(\mathbf{L}_k)$ which intersects both $\gamma_1$ and $\gamma_2$, $f_{k+1}(\Lambda) = f_k(\Lambda) + 1/2$ for every $\Lambda \in \components(\mathbf{L}_k)$ which intersects just $\gamma_1$ or just $\gamma_2$, and $f_{k+1}(\Lambda) = f_k(\Lambda)$ otherwise. In the case of a $\vee$-handle $E_k$, define $f_{k+1}(\Lambda) = f_k(\Lambda) + 1$ for every $\Lambda \in \components(\mathbf{L}_k)$ which intersects $\gamma$, instead of $f_{k+1}(\Lambda) = f_k(\Lambda) + 2$.
\end{example}

\begin{example}
The cobordism $\Upsilon \prec m(9_{46})$ pictured on the left of Figure~\ref{fig:ex-9-46} does not admit a twist function $f$ with $f(\Lambda_+) = 0$. The twist function shown in Figure~\ref{fig:ex-9-46} has the smallest possible value of $f(\Lambda_+)$. 
\end{example}

\begin{figure}
    \centering
    \begin{tikzcd}[column sep=0.1em, row sep=-2.25em]
    \reflectbox{\includegraphics[scale=0.45]{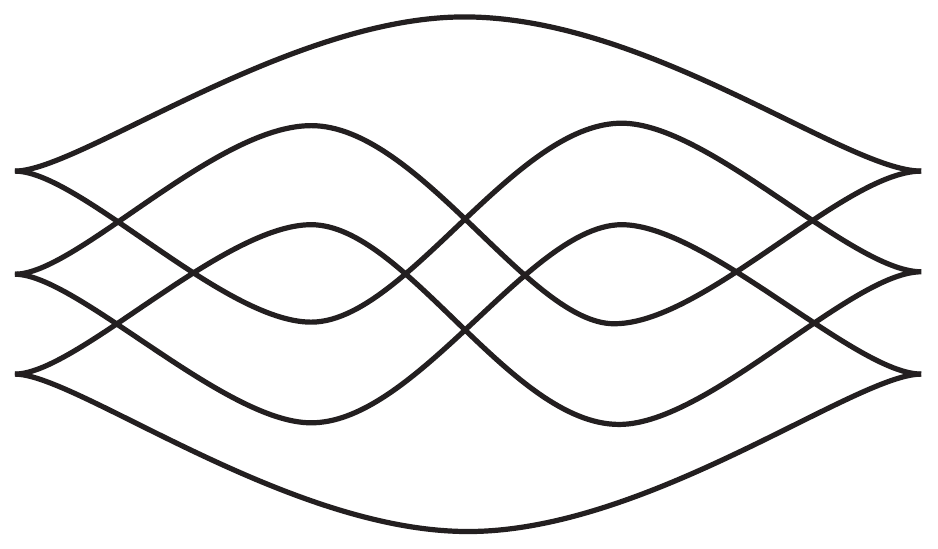}} &
    \begin{gathered} 1/2\\\vspace{73pt} \end{gathered}\hspace{1.5em} &
    \reflectbox{\includegraphics[scale=0.45]{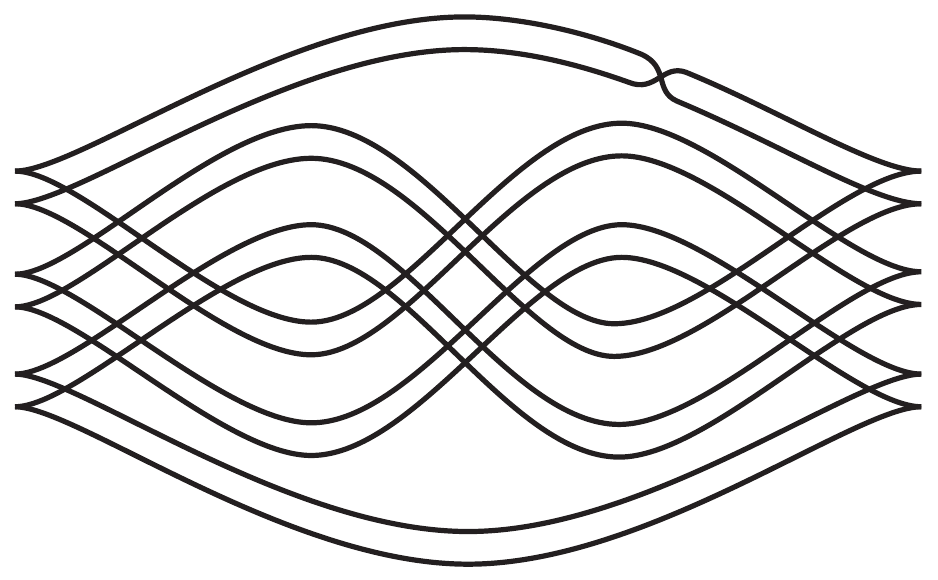}}
    \\
    \reflectbox{\includegraphics[scale=0.45]{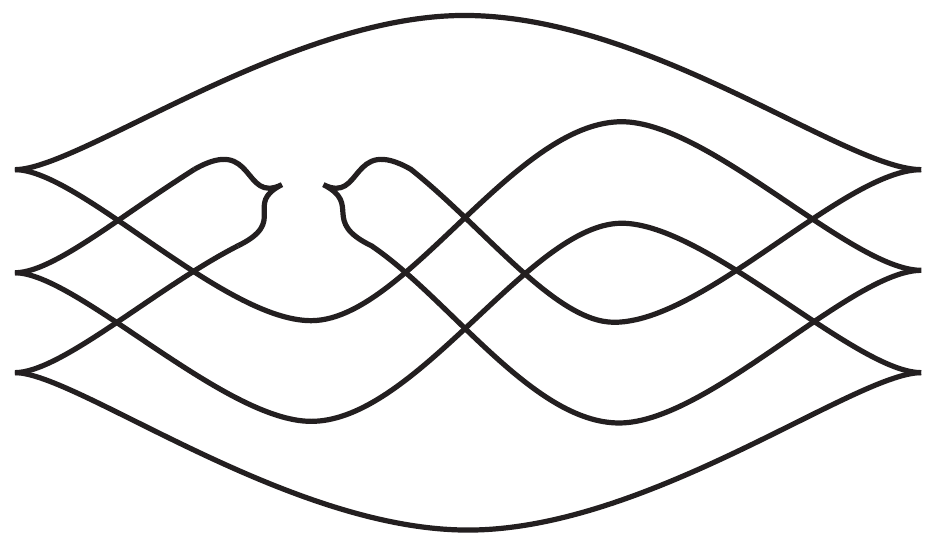}}
        \arrow[u,"\text{$\wedge$-handle}"'] &
    \begin{gathered} 1\vspace{10pt}\\1/2\\\vspace{48pt} \end{gathered}\hspace{1.5em} &
    \reflectbox{\includegraphics[scale=0.45]{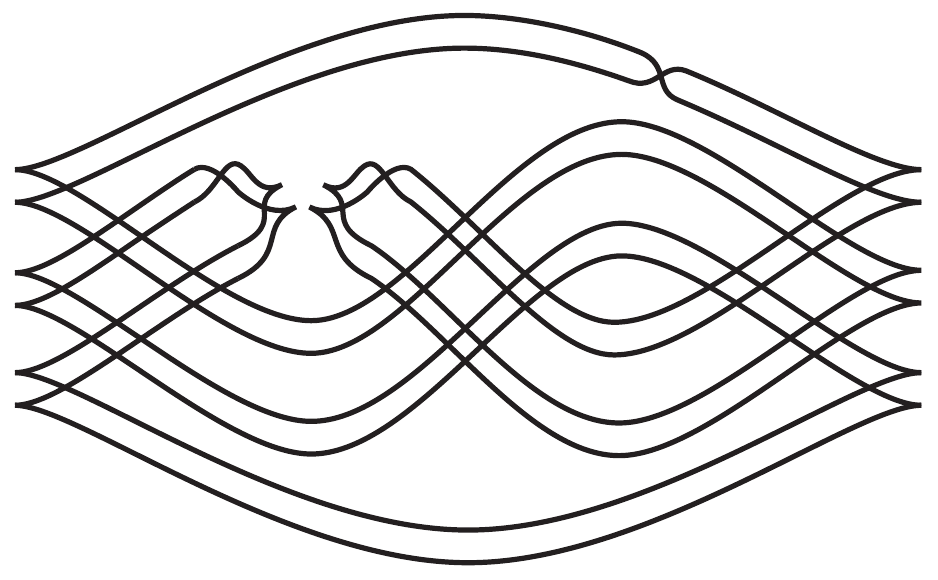}}
        \arrow[u,"\text{$\Sigma$-$\wedge$-handle}"']
    \\
    \reflectbox{\includegraphics[scale=0.45]{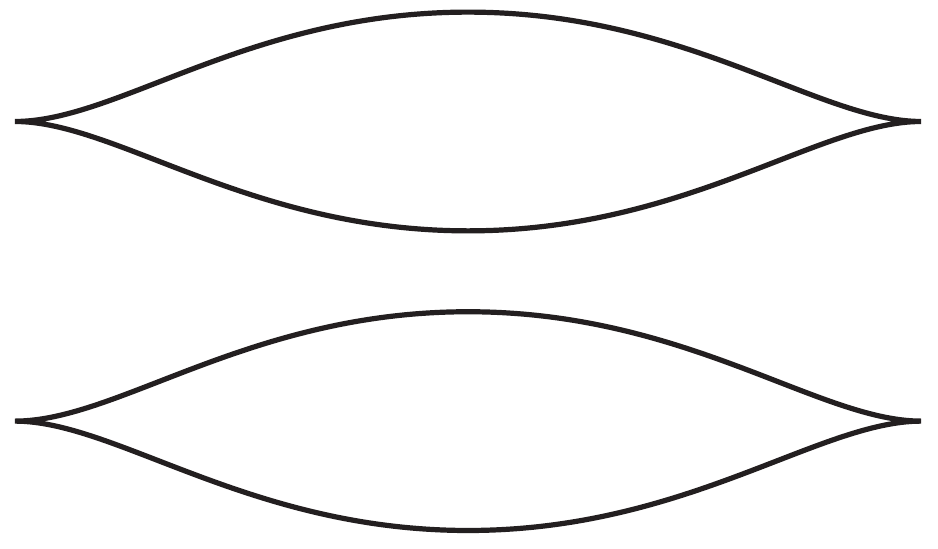}}
        \arrow[u,"\text{Isotopy}"'] &
    \begin{gathered} 1\vspace{23pt}\\1/2\\\vspace{44pt} \end{gathered}\hspace{1.5em} &
    \reflectbox{\includegraphics[scale=0.45]{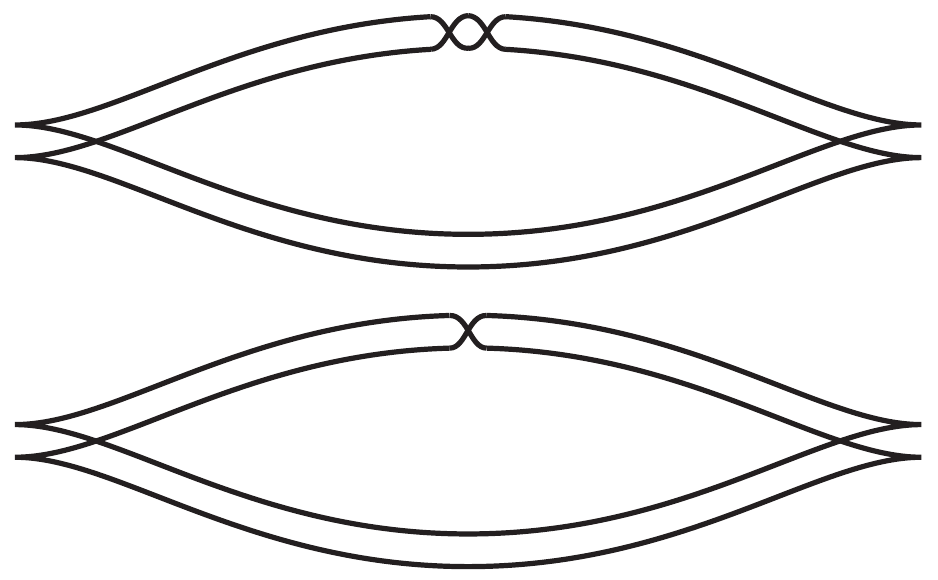}}
        \arrow[u,"\text{Isotopy}"']
    \\
    \reflectbox{\includegraphics[scale=0.45]{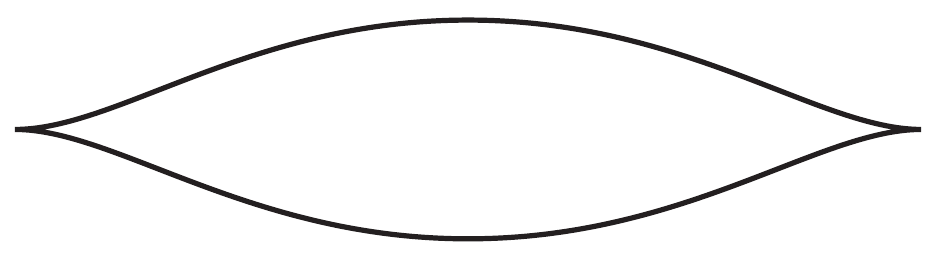}}
        \arrow[u,"\text{0-handle}"'] &
    \begin{gathered} 1/2\\\vspace{9pt} \end{gathered}\hspace{1.5em} &
    \reflectbox{\includegraphics[scale=0.45]{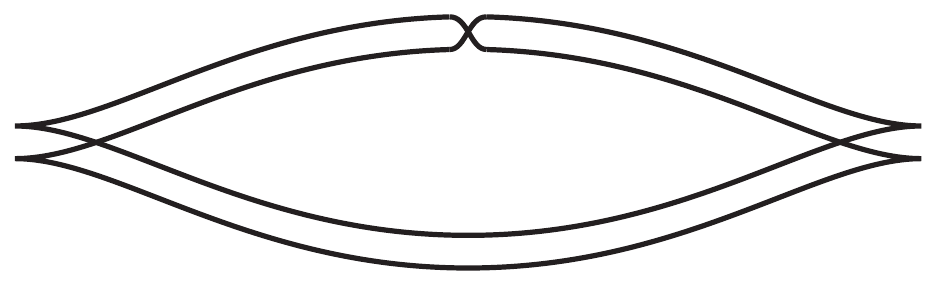}}
        \arrow[u,"\text{$\Sigma$-0-handle}"']
    \\
    \end{tikzcd}
    \caption{On the left, a decomposable cobordism $\Upsilon \prec m(9_{46})$ and the value of its regular twist function from Proposition~\ref{prop:exists-tw-fn}~(2) shown next to each component. On the right, Theorem~\ref{thm:decompTwist}~(2) applied to this twist function, resulting in a cobordism $\lsat{}{\Upsilon}{\twist_s^{1/2}} \prec \lsat{}{m(9_{46})}{\twist_s^{1/2}}$ for $s$ symmetric.}    \label{fig:ex-9-46}
\end{figure}

\begin{example}
The cobordism $\Upsilon \prec 3_1\,\#\,m(9_{46})$ pictured on the left of Figure~\ref{fig:ex-3-1-sum-9-46} contains a 0-handle but admits a (regular) twist function $f$ with $f(\Lambda_+) = 0$. The upper half of this twist function comes from Example~\ref{ex:zero-at-top} and the lower half from the twist function in Figure~\ref{fig:ex-9-46}. As a consequence, Example~\ref{ex:zero-at-top} does not cover every case where there exists a twist function $f$ with $f(\Lambda_+) = 0$.
\end{example}

Notice that we can never have $f(\Lambda_-)=0$ unless $\mathbf{L}$ comes from an isotopy.  To see this, let $\mathbf{L}$ be a cobordism between knots which contains at least one 0- or 1-handle. By similar reasoning as in Remark~\ref{rmk:full-f>1}, it follows that for any twist function $f$ on $\mathbf{L}$, $f(\Lambda_-) > 0$.

\begin{figure}
    \centering
    \begin{tikzcd}[column sep=0.1em, row sep=-2.25em]
    \reflectbox{\includegraphics[scale=0.45]{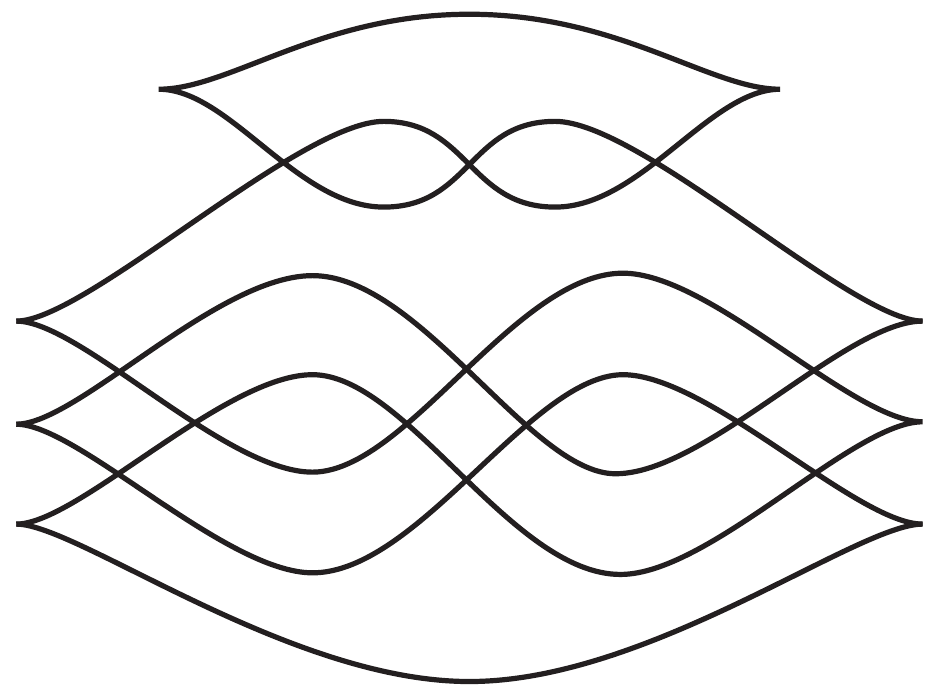}} &
    \begin{gathered} 0\\\vspace{73pt} \end{gathered}\hspace{1.5em} &
    \reflectbox{\includegraphics[scale=0.45]{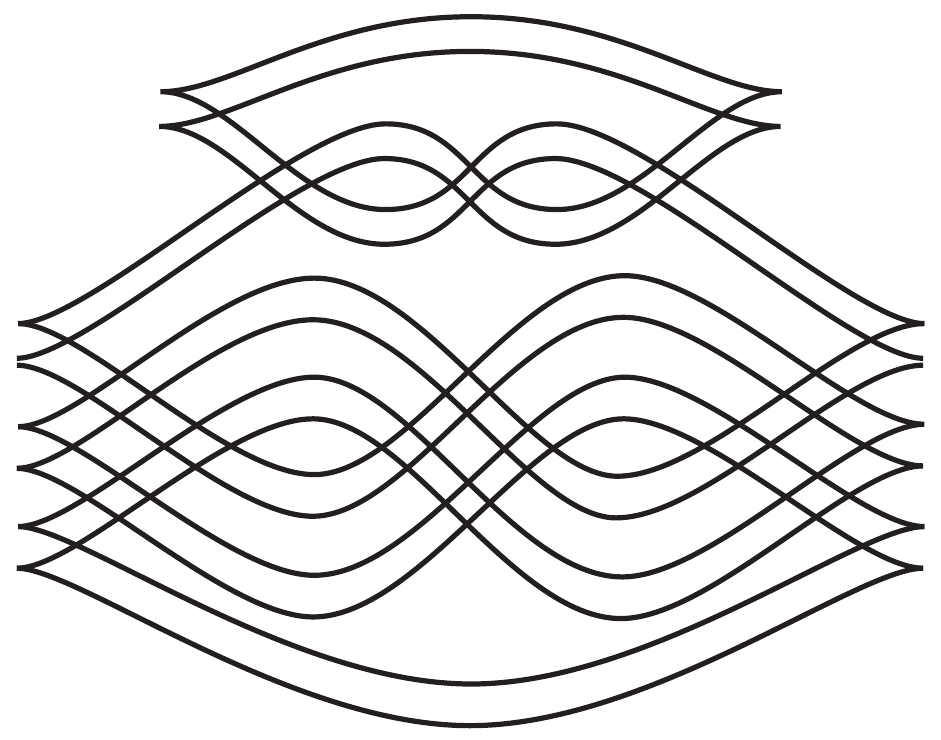}}
    \\
    \reflectbox{\includegraphics[scale=0.45]{graphics/ex-9-46-1.pdf}}
        \arrow[u,"\text{Fig.~\ref{fig:ex-3-1}}"'] &
    \begin{gathered} 2\\\vspace{73pt}\end{gathered}\hspace{1.5em} &
    \reflectbox{\includegraphics[scale=0.45]{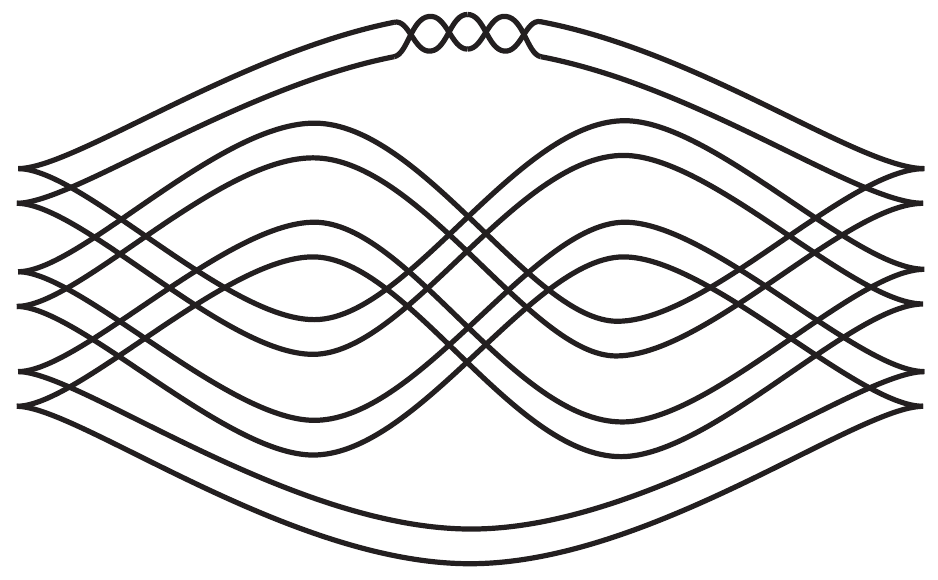}}
        \arrow[u]
    \\
    \reflectbox{\includegraphics[scale=0.45]{graphics/ex-9-46-4.pdf}}
        \arrow[u,"\text{Fig.~\ref{fig:ex-9-46}}"'] &
    \begin{gathered} 2\\\vspace{9pt} \end{gathered}\hspace{1.5em} &
    \reflectbox{\includegraphics[scale=0.45]{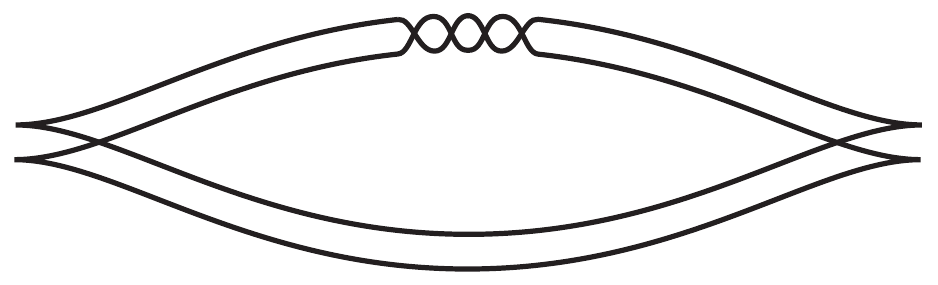}}
        \arrow[u]
    \\
    \end{tikzcd}
    \caption{On the left, a decomposable cobordism $\Upsilon \prec 3_1\,\#\,m(9_{46})$ and a twist function on it with $f(\Lambda_+) = 0$. 
    On the right, Theorem~\ref{thm:decompTwist}~(2) applied to this twist function, resulting in a cobordism $\lsat{}{\Upsilon}{\twist_s^2} \prec \lsat{}{3_1\,\#\,m(9_{46})}{\twist^0_s}$ for $s$ symmetric.}
    \label{fig:ex-3-1-sum-9-46}
\end{figure}

\subsection{Decomposable cobordisms of satellites}

The following result is a more general version of Theorem~\ref{thm:decomp}, where the possibly non-orientable cobordism in the statement of Theorem~\ref{thm:decomp} comes from ignoring orientation in \eqref{cor:decompGen:uniform}.

\begin{corollary}\label{cor:decompGen}
Suppose $\Lambda_\pm$ are knots and $\Pi_\pm$ are Legendrian $n$-tangles. Given decomposable cobordisms $\Lambda_- \prec_\mathbf{L} \Lambda_+$ and $\Pi_- \prec_{\mathbf{P}} \Pi_+$,
\begin{enumerate}
\item\label{cor:decompGen:full} If $\mathbf{L}$ satisfies Property A, then there exists a decomposable cobordism
 \[ \lsat{}{\Lambda_-}{\twist^{1+2g(\mathbf{L})}\Pi_-} \prec
    \lsat{}{\Lambda_+}{\twist\Pi_+}; \]
\item\label{cor:decompGen:half} If $\mathbf{L}$ satisfies Property A and $\Pi_\pm$ have symmetric orientation, then there exists a decomposable cobordism
 \[ \lsat{}{\Lambda_-}{\twist^{1/2+2g(\mathbf{L})}\Pi_-} \prec
    \lsat{}{\Lambda_+}{\twist^{1/2}\Pi_+}; \]
\item\label{cor:decompGen:uniform} If $\Pi_\pm$ have uniform orientation, then there exists a decomposable cobordism
 \[ \lsat{}{\Lambda_-}{\twist^{1/2}\Pi_-} \prec
    \lsat{}{\Lambda_+}{\twist^{1/2}\Pi_+}. \]
\end{enumerate}
\end{corollary}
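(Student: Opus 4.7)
The plan is to deduce all three parts by combining Proposition~\ref{prop:exists-tw-fn} (existence of twist functions with prescribed boundary values) with Theorem~\ref{thm:decompTwist} (the satellite construction from a twist function), using Corollary~\ref{cor:remove-twists} in the uniform case to strip off excess twists at the bottom.

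For part~\ref{cor:decompGen:full}, I would invoke Proposition~\ref{prop:exists-tw-fn}~(1) to produce a full twist function $f$ on $\mathbf{L}$ with $f(\Lambda_+) = 1$. The Property~A hypothesis makes $f$ regular and, by the ``moreover'' clause of that proposition, forces $f(\Lambda_-) = 1 + 2g(\mathbf{L})$. Theorem~\ref{thm:decompTwist}~(1) then directly yields the claimed decomposable cobordism $\lsat{}{\Lambda_-}{\twist^{1+2g(\mathbf{L})}\Pi_-} \prec \lsat{}{\Lambda_+}{\twist\Pi_+}$. For part~\ref{cor:decompGen:half} the argument is the same, except I would use Proposition~\ref{prop:exists-tw-fn}~(2) to obtain a (possibly non-full) regular $f$ with $f(\Lambda_+) = 1/2$ and $f(\Lambda_-) = 1/2 + 2g(\mathbf{L})$, then apply Theorem~\ref{thm:decompTwist}~(2), whose symmetric-orientation hypothesis is exactly what is now assumed on $\Pi_\pm$.

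For part~\ref{cor:decompGen:uniform}, Property~A is no longer available, so the twist function produced by Proposition~\ref{prop:exists-tw-fn}~(2) may fail to be regular; however, Theorem~\ref{thm:decompTwist}~(3) requires only uniform orientation of $\Pi_\pm$, not regularity. My plan is to apply Theorem~\ref{thm:decompTwist}~(3) with this $f$ to obtain $\lsat{}{\Lambda_-}{\twist^{f(\Lambda_-)}\Pi_-} \prec \lsat{}{\Lambda_+}{\twist^{1/2}\Pi_+}$, and then to pre-compose it on the bottom with the decomposable cobordism $\lsat{}{\Lambda_-}{\twist^{1/2}\Pi_-} \prec \lsat{}{\Lambda_-}{\twist^{f(\Lambda_-)}\Pi_-}$ supplied by Corollary~\ref{cor:remove-twists} with $k = f(\Lambda_-) - 1/2$. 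Here $k \geq 0$ since the construction in Proposition~\ref{prop:exists-tw-fn}~(2) only ever \emph{adds} twists going downward from $f(\Lambda_+) = 1/2$. Stacking these two decomposable cobordisms produces the desired $\lsat{}{\Lambda_-}{\twist^{1/2}\Pi_-} \prec \lsat{}{\Lambda_+}{\twist^{1/2}\Pi_+}$.

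Overall this is essentially a bookkeeping exercise; the substantive geometric content is already packaged inside Theorem~\ref{thm:decompTwist} and Proposition~\ref{prop:exists-tw-fn}. I do not anticipate a main obstacle: the only points to verify are that $k \in \halfN$ in part~\ref{cor:decompGen:uniform} and that concatenation of decomposable cobordisms is decomposable, both of which are immediate from the definitions.
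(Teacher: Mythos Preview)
Your proposal is correct and matches the paper's own proof essentially line for line: for parts~\ref{cor:decompGen:full} and~\ref{cor:decompGen:half} the paper applies Theorem~\ref{thm:decompTwist}~(1) and~(2), respectively, to the twist functions supplied by Proposition~\ref{prop:exists-tw-fn}~(1) and~(2), and for part~\ref{cor:decompGen:uniform} it applies Theorem~\ref{thm:decompTwist}~(3) to the twist function from Proposition~\ref{prop:exists-tw-fn}~(2) and then invokes Corollary~\ref{cor:remove-twists} with $k = f(\Lambda_-) - 1/2$. Your added remarks justifying $k \geq 0$ and the decomposability of the composite are welcome but not strictly needed.
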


\begin{proof}
\leavevmode\makeatletter\@nobreaktrue\makeatother
\begin{enumerate}
\item Apply Theorem~\ref{thm:decompTwist}~(1) to the full and regular twist function from Proposition~\ref{prop:exists-tw-fn}~(1). 

\item Apply Theorem~\ref{thm:decompTwist}~(2) to the regular twist function from Proposition~\ref{prop:exists-tw-fn}~(2).

\item Apply Theorem~\ref{thm:decompTwist}~(3) to the twist function $f$ from Proposition~\ref{prop:exists-tw-fn}~(2) to get a cobordism
\[ \lsat{}{\Lambda_-}{\twist^{f(\Lambda_-)}\Pi_-} \prec \lsat{}{\Lambda_+}{\twist^{1/2}\Pi_+}, \]
then apply Corollary~\ref{cor:remove-twists} with $k = f(\Lambda_-) - 1/2$.
\end{enumerate}
\end{proof}

\section{Connected sums and higher dimensions}
\label{sec:connect-sum}

In this section, we show how to apply Theorem \ref{thm:geom} to construct cobordisms between connected sums. We will then use connected sums to motivate and to briefly discuss how our results can be extended to higher dimensions.

We begin by discussing connected sums of Legendrian knots, as defined, for example, in \cite{etnyre-honda:connected-sum}. Given two Legendrian knots $\Lambda_1,\Lambda_2$, there is always a cobordism $\Lambda_1\cup\Lambda_2\prec\Lambda_1\#\Lambda_2$. Since Lagrangian cobordisms are not inherently symmetric, however, this cobordism cannot generally be reversed to get a cobordism $\Lambda_1\#\Lambda_2\prec\Lambda_1\cup\Lambda_2$.  

We do, however, have the following corollary of Theorem~\ref{thm:geom}.

\begin{corollary}
\label{cor:connect-sum}
Given Lagrangian cobordisms  $\Lambda_-^1\prec_{L^1}\Lambda_+^1$ and $\Lambda_-^2\prec_{L^2}\Lambda_+^2$ in $\R^3$, there is a Lagrangian cobordism
\[\Lambda_-^1\#\Lambda_-^2\prec_{L}\Lambda_+^1\#\Lambda_+^2.\]
\end{corollary}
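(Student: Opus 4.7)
The plan is to realize each connected sum as a Legendrian satellite by a $1$-tangle and invoke Theorem~\ref{thm:geom}.  For any Legendrian knot $K \subset (\R^3, \alpha_0)$, place $K$ inside a standard neighborhood (contactomorphic to $J^1 S^1$) of a maximal Legendrian unknot $\Upsilon$ so that $K$ winds once around the core, and then cut along the meridional disk $J^1\{0\}$ to express $K$ as the closure $\phi(\Pi_K)$ of a Legendrian $1$-tangle $\Pi_K \subset J^1[0,1]$.  By the definition of the Legendrian satellite construction, and because the Thurston--Bennequin framing used there agrees with the standard framing used in the Legendrian connected sum, one has $\lsat{}{\Lambda}{\Pi_K} = \Lambda \# K$ for every Legendrian knot $\Lambda \subset \R^3$.

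Next, construct a Lagrangian cobordism $P$ between the $1$-tangles $\Pi_{\Lambda_-^2}$ and $\Pi_{\Lambda_+^2}$ inside the symplectization $\R \times J^1[0,1]$ from the given cobordism $L^2$.  Choose a smooth arc $\gamma \subset L^2$ from $p_- \in \Lambda_-^2$ to $p_+ \in \Lambda_+^2$ that is cylindrical near the ends, so that $\gamma \cap L^2_{(-\infty,T_-]} = (-\infty,T_-] \times \{p_-\}$ and similarly at the upper end.  Using the Weinstein tubular neighborhood theorem for Lagrangian cobordisms (Lemma~\ref{tubnbhd}), together with the fact that a standard contact neighborhood of a Legendrian arc in $\R^3$ is contactomorphic to $J^1[0,1]$, identify a thin ambient neighborhood of $\gamma$ in $\R \times \R^3$ with a portion of the symplectization $\R \times J^1[0,1]$, compatibly with the standard contact neighborhoods of $\Lambda^2_\pm$ at the ends.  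Removing from $L^2$ an open tubular neighborhood of $\gamma$ taken within $L^2$ then produces the desired cobordism $P$ in $\R \times J^1[0,1]$, with cylindrical ends matching $\Pi_{\Lambda^2_\pm}$.

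Finally, apply Theorem~\ref{thm:geom} with $L = L^1$ and the $1$-tangle cobordism $P$.  Observe that the full twist $\twist$ on a $1$-tangle is the trivial $1$-tangle, since the half-twist $\twist^{1/2}$ on $n$ strands has $n(n-1)/2$ crossings, which vanishes when $n = 1$; thus $\twist^k\Pi$ is isotopic to $\Pi$ for every $k \in \bbN$ and every $1$-tangle $\Pi$, so the extra twists in the conclusion of Theorem~\ref{thm:geom} disappear.  We thereby obtain a Lagrangian cobordism
\[ \lsat{}{\Lambda_-^1}{\Pi_{\Lambda_-^2}} \prec \lsat{}{\Lambda_+^1}{\Pi_{\Lambda_+^2}}, \]
which by the first paragraph is precisely $\Lambda_-^1 \# \Lambda_-^2 \prec \Lambda_+^1 \# \Lambda_+^2$.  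The main technical hurdle is the second step: identifying a neighborhood of the cut arc $\gamma$ in $\R \times \R^3$ with a portion of $\R \times J^1[0,1]$ compatibly with the cylindrical ends requires a delicate application of Weinstein-type tubular neighborhood techniques for non-compact Lagrangians, in the spirit of Section~\ref{ssec:tubular}.
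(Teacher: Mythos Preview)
Your second step --- converting the knot cobordism $L^2$ into a tangle cobordism $P$ by cutting along an arc $\gamma$ --- is precisely the difficulty that the paper explicitly identifies and then \emph{avoids}. As the paper notes just before Lemma~\ref{lem:pre-connect-sum}, ``a cobordism of knots $\Lambda_-\prec_{L}\Lambda_+$ is not necessarily a cobordism of patterns $\Pi_-\prec\Pi_+$, as the cobordism of patterns is required to be trivial on the boundary of the pattern, while a cobordism of knots is not required to be trivial anywhere.'' Your cut-and-straighten construction needs $L^2$ to be cylindrical in an \emph{ambient} sense along the entire arc $\gamma$, so that after cutting the new boundary of $P$ sits as $\bbR \times \{\text{pt}\}$ inside $\bbR \times J^1\{0,1\}$. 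But Lemma~\ref{tubnbhd} only controls the Weinstein neighborhood at the $t\to\pm\infty$ ends, not along an arc running from bottom to top; straightening the ambient symplectic structure along all of $\gamma$, compatibly with both ends simultaneously, is a new statement that is neither proved in the paper nor clearly a routine consequence of Section~\ref{ssec:tubular}. You flag this as a ``technical hurdle,'' but it is in fact the entire content of the problem, and you have not supplied an argument.

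The paper sidesteps this by never turning $L^2$ into a tangle cobordism at all. Instead it proves Lemma~\ref{lem:pre-connect-sum} (connected sum with a \emph{fixed} knot commutes with cobordism) by applying Theorem~\ref{thm:geom} with a \emph{trivial} tangle cobordism $\Pi'\prec\Pi'$, and then uses that lemma twice: first with companion cobordism $L^1$ and fixed summand $\Lambda_-^2$ to get $\Lambda_-^1\#\Lambda_-^2 \prec \Lambda_+^1\#\Lambda_-^2$, then with companion cobordism $L^2$ and fixed summand $\Lambda_+^1$ to get $\Lambda_+^1\#\Lambda_-^2 \prec \Lambda_+^1\#\Lambda_+^2$. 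Stacking these two cobordisms gives the result. This two-step ``disentangling'' is the idea your argument is missing.
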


The smooth connected sum is a satellite construction for $n=1$ strand.  We expect that the same must hold in the Legendrian case and that we can translate the construction to cobordisms. This is true up to one issue: a cobordism of knots $\Lambda_-\prec_{L}\Lambda_+$ is not necessarily a cobordism of patterns $\Pi_-\prec\Pi_+$, as the cobordism of patterns is required to be trivial on the boundary of the pattern, while a cobordism of knots is not required to be trivial anywhere.  We get around this issue by disentangling the satellite and cobordism operations.

\begin{lemma}
\label{lem:pre-connect-sum}
	Given a Lagrangian cobordism $\Lambda_- \prec_L \Lambda_+$ and a Legendrian knot $\Lambda'$, there exists a Lagrangian cobordism $\Lambda_- \# \Lambda' \prec_{L^\#} \Lambda_+ \# \Lambda'$.  
\end{lemma}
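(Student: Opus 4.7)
The plan is to realize the Legendrian connected sum with $\Lambda'$ as a Legendrian satellite by a one-stranded tangle and then to invoke Proposition~\ref{prop:main_geom} with $n=1$, using the trivial cobordism of patterns. This is exactly the ``disentangling'' of satellite and cobordism that the preceding paragraph advertises: we do not ask for any non-trivial cobordism of patterns, because a single fixed pattern already does the job.

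First I would open up $\Lambda'$ into a 1-tangle. Pick a point $p$ on the front diagram of $\Lambda'$ lying on a smooth arc (away from cusps and crossings), and use a small Legendrian isotopy so that $\Lambda'$ is horizontal through $p$. Cutting at $p$ and suitably rescaling produces a Legendrian 1-tangle $\Pi' \subset J^1[0,1]$ whose closure is $\Lambda'$. From the diagrammatic description of the satellite in Figure~\ref{fig:front-sat}, the Legendrian satellite $\lsat{}{\Lambda}{\Pi'}$ of any Legendrian knot $\Lambda$ is Legendrian isotopic to the Legendrian connected sum $\Lambda \# \Lambda'$, with the contact (Thurston--Bennequin) framing built in. In particular, $\lsat{}{\Lambda_\pm}{\Pi'} \simeq \Lambda_\pm \# \Lambda'$.

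Next I would apply Proposition~\ref{prop:main_geom} to the input data $\Lambda_- \prec_L \Lambda_+$ (with $r_-=r_+=1$), $\Pi_\pm = \Pi'$, and the trivial cylindrical cobordism $P = \bbR \times \Pi'$. With $r_-=r_+=1$, the conditions $\sum g_i = g(L)$, $\sum k_i = 0$, $\sum l_i = 1$ force $g_1 = g(L)$, $k_1 = 0$, and $l_1 = 1$, so the proposition yields an orientable Lagrangian cobordism
\[
  \lsat{}{\Lambda_-}{\twist^{1+2g(L)}\Pi'} \;\prec\; \lsat{}{\Lambda_+}{\twist\, \Pi'}.
\]
The point is that for $n = 1$ a half-twist has $1(1-1)/2 = 0$ crossings (cf.\ Figure~\ref{fig:twist}), so $\twist$ is the trivial $1$-tangle; hence $\twist^k \Pi' = \Pi'$ for every $k \in \halfN$. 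Both satellites therefore collapse to $\lsat{}{\Lambda_\pm}{\Pi'} \simeq \Lambda_\pm \# \Lambda'$, producing the desired cobordism $L^\#$.

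The only real point that needs care is the identification in the first step: that $\Pi'$ obtained by opening $\Lambda'$ at a smooth point genuinely realizes the Legendrian connected sum (with the contact framing) when used as a satellite pattern. This is essentially the diagrammatic definition of Legendrian connect sum and follows immediately from Figure~\ref{fig:front-sat}. Once this is granted, everything else is a direct specialization of Proposition~\ref{prop:main_geom} to a single strand, where the unwanted compensating twists $\twist^k$ are automatically trivial.
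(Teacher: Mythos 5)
Your proposal is correct and follows essentially the same route as the paper: convert $\Lambda'$ into a Legendrian $1$-tangle $\Pi'$ whose satellite realizes the connected sum (the paper phrases this via Chekanov's long Legendrian knots rather than cutting the front at a smooth point), apply the geometric satellite construction (the paper cites Theorem~\ref{thm:geom}, which rests on Proposition~\ref{prop:main_geom}, the result you invoke directly) with the trivial cylindrical cobordism $\Pi' \prec \Pi'$, and observe that for $n=1$ the twist $\twist$ is trivial, so the compensating twists disappear. No substantive difference from the paper's argument.
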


\begin{proof}
	We begin by setting up some language.	Recall from \cite[\S12]{chv} that a long Legendrian knot is a Legendrian embedding $\bbR \hookrightarrow (\bbR^3, \alpha_0)$ where the image agrees with the $x$ axis outside of a compact set.  On one hand, Chekanov proved that isotopy classes of long Legendrian knots are in one-to-one correspondence with isotopy classes of (closed) Legendrian knots \cite[Proposition 12.3]{chv}.  On the other hand, long Legendrian knots are also in correspondence with connected Legendrian $1$-tangles.  Further, the closure of a long Legendrian knot corresponds to taking the closure of the corresponding $1$-tangle to a solid torus knot and then taking the satellite around the maximal unknot.
	
	To construct the cobordism $L^\#$, we start by using the correspondences above to convert $\Lambda'$ into a $1$-tangle $\Pi'$.  Apply Theorem~\ref{thm:geom} to the cobordism $\Lambda_- \prec_L \Lambda_+$ and the trivial cobordism of tangles $\Pi' \prec \Pi'$ to obtain a cobordism $\Sigma(\Lambda_-, \Pi') \prec_{L^\#} \Sigma(\Lambda_+, \Pi')$.  Observe that for $n=1$ strand, the twist $\twist$ is trivial. It is clear that $\Sigma(\Lambda_\pm, \Pi')$ is  equivalent to $\Lambda_\pm \# \Lambda'$,  hence proving the lemma.
\end{proof}

The corollary will now follow quickly.  

\begin{proof}[Proof of Corollary~\ref{cor:connect-sum}]
	First apply Lemma~\ref{lem:pre-connect-sum} to $\Lambda_\pm = \Lambda_\pm^1$ and $\Lambda' = \Lambda_-^2$ to obtain a cobordism 
	\[\Lambda_-^1\#\Lambda_-^2=\Sigma(\Lambda_-^1,\Pi_-^2)\prec \Sigma(\Lambda_+^1,\Pi_-^2)=\Lambda_+^1\#\Lambda_-^2.\]
	Next, apply the lemma to $\Lambda_\pm = \Lambda_\pm^2$ and $\Lambda' = \Lambda_+^1$ to obtain a cobordism
	\[\Lambda_+^1\#\Lambda_-^2=\Sigma(\Lambda_-^2,\Pi_+^1)\prec \Sigma(\Lambda_+^2,\Pi_+^1)=\Lambda_+^1\#\Lambda_+^2.\]
	Finally, we compose the two cobordisms above to obtain the desired cobordism.
\end{proof}

\begin{remark}	
	There is also a version of this corollary when $L_1$ and $L_2$ are decomposable cobordisms.  The proof is trivial, though we could also mimic the proof above using Theorem~\ref{thm:decomp} in place of Theorem~\ref{thm:geom}.
\end{remark}

The corollary may be readily generalized to connected sums of Legendrians in higher dimensions.  In fact, the proof goes through almost word-for-word: first, it is straightforward to generalize the connect sum operation and Chekanov's long Legendrian knot material.  More importantly, the proof only uses Theorem~\ref{thm:geom} in the case of $1$-tangles.  This use is valid for the connected sum in higher dimensions, as the only places where the proof of Theorem~\ref{thm:geom} relied on three-dimensional techniques were in the description of twists at the Legendrian ends (which does not apply to a $1$-tangle) and the construction of the perturbing function $f$ in Section~\ref{ssec:push-off} (which is not necessary for a $1$-tangle).  

In order to fully generalize Theorem~\ref{thm:geom}, one would have to analyze singularities of a function $f: L \to \bbR$ of any index $1\leq k\leq n$. For each such singularity, one would have to find an appropriate version of a ``twist'' and then one would have to make rigorous the intuition discussed after Proposition~\ref{prop:construct-f} rather than relying on the explicit construction of $f$ in Section~\ref{ssec:push-off}, which uses the classification of surfaces.  We leave these developments to future work.


\bibliographystyle{amsplain}
\bibliography{main.bib}

\def\cprime{$'$} \def\polhk#1{\setbox0=\hbox{#1}{\ooalign{\hidewidth
  \lower1.5ex\hbox{`}\hidewidth\crcr\unhbox0}}}
\providecommand{\bysame}{\leavevmode\hbox to3em{\hrulefill}\thinspace}
\providecommand{\MR}{\relax\ifhmode\unskip\space\fi MR }
\providecommand{\MRhref}[2]{%
  \href{http://www.ams.org/mathscinet-getitem?mr=#1}{#2}
}
\providecommand{\href}[2]{#2}
\begin{thebibliography}{10}

\bibitem{audin-lalonde-polterovich}
Mich\`ele Audin, Fran\c{c}ois Lalonde, and Leonid Polterovich, \emph{Symplectic
  rigidity: {L}agrangian submanifolds}, Holomorphic curves in symplectic
  geometry, Progr. Math., vol. 117, Birkh\"auser, Basel, 1994, pp.~271--321.

\bibitem{bs:monopole}
John~A. Baldwin and Steven Sivek, \emph{Invariants of {L}egendrian and
  transverse knots in monopole knot homology}, J. Symplectic Geom. \textbf{16}
  (2018), no.~4, 959--1000. \MR{3917725}

\bibitem{blllmppst:cob-survey}
Sarah Blackwell, No{\'e}mie Legout, Caitlin Leverson, Ma{\"y}lis Limouzineau,
  Ziva Myer, Yu~Pan, Samantha Pezzimenti, Lara~Simone Su{\'a}rez, and Lisa
  Traynor, \emph{Constructions of {L}agrangian cobordisms}, Preprint available
  as arXiv:2101.00031, 2020.

\bibitem{bst:construct}
Fr\'ed\'eric Bourgeois, Joshua~M. Sabloff, and Lisa Traynor, \emph{Lagrangian
  cobordisms via generating families: {C}onstruction and geography}, Algebr.
  Geom. Topol. \textbf{15} (2015), no.~4, 2439--2477.

\bibitem{chantraine:collar}
Baptiste Chantraine, \emph{Some non-collarable slices of {L}agrangian
  surfaces}, Bull. Lond. Math. Soc. \textbf{44} (2012), no.~5, 981--987.
  \MR{2975156}

\bibitem{chantraine:non-symm}
\bysame, \emph{Lagrangian concordance is not a symmetric relation}, Quantum
  Topol. \textbf{6} (2015), no.~3, 451--474. \MR{3392961}

\bibitem{chantraine:disconnected-ends}
\bysame, \emph{A note on exact {L}agrangian cobordisms with disconnected
  {L}egendrian ends}, Proc. Amer. Math. Soc. \textbf{143} (2015), no.~3,
  1325--1331.

\bibitem{c-dr-g-g-cobordism}
Baptiste Chantraine, Georgios Dimitroglou~Rizell, Paolo Ghiggini, and Roman
  Golovko, \emph{Floer theory for {L}agrangian cobordisms}, J. Differential
  Geom. \textbf{114} (2020), no.~3, 393--465. \MR{4072203}

\bibitem{chv}
Yuri Chekanov, \emph{Differential algebra of {L}egendrian links}, Invent. Math.
  \textbf{150} (2002), 441--483.

\bibitem{ce:weinstein}
Kai Cieliebak and Yakov Eliashberg, \emph{From {S}tein to {W}einstein and
  back}, American Mathematical Society Colloquium Publications, vol.~59,
  American Mathematical Society, Providence, RI, 2012, Symplectic geometry of
  affine complex manifolds. \MR{3012475}

\bibitem{cns:obstr-concordance}
Christopher Cornwell, Lenhard Ng, and Steven Sivek, \emph{Obstructions to
  {L}agrangian concordance}, Algebr. Geom. Topol. \textbf{16} (2016), no.~2,
  797--824. \MR{3493408}

\bibitem{rizell:surgery}
Georgios Dimitroglou~Rizell, \emph{Legendrian ambient surgery and {L}egendrian
  contact homology}, J. Symplectic Geom. \textbf{14} (2016), no.~3, 811--901.

\bibitem{ehk:leg-knot-lagr-cob}
Tobias Ekholm, Ko~Honda, and Tamas K\'alm\'an, \emph{Legendrian knots and exact
  {L}agrangian cobordisms}, J. Eur. Math. Soc. (JEMS) \textbf{18} (2016),
  no.~11, 2627--2689.

\bibitem{etnyre:knot-intro}
John~B. Etnyre, \emph{Legendrian and transversal knots}, Handbook of knot
  theory, Elsevier B. V., Amsterdam, 2005, pp.~105--185.

\bibitem{etnyre-honda:connected-sum}
John~B. Etnyre and Ko~Honda, \emph{On connected sums and {L}egendrian knots},
  Adv. Math. \textbf{179} (2003), no.~1, 59--74. \MR{2004728}

\bibitem{ev:satellites}
John~B. Etnyre and Vera V\'{e}rtesi, \emph{Legendrian satellites}, Int. Math.
  Res. Not. IMRN (2018), no.~23, 7241--7304. \MR{3883132}

\bibitem{geiges:intro}
Hansj\"org Geiges, \emph{An introduction to contact topology}, Cambridge
  Studies in Advanced Mathematics, vol. 109, Cambridge University Press,
  Cambridge, 2008.

\bibitem{positivity}
Kyle Hayden and Joshua~M. Sabloff, \emph{Positive knots and {L}agrangian
  fillability}, Proc. Amer. Math. Soc. \textbf{143} (2015), no.~4, 1813--1821.

\bibitem{lin:lagr-cap}
Francesco Lin, \emph{Exact {L}agrangian caps of {L}egendrian knots}, J.
  Symplectic Geom. \textbf{14} (2016), no.~1, 269--295. \MR{3523257}

\bibitem{mcduff-salamon}
Dusa McDuff and Dietmar Salamon, \emph{Introduction to symplectic topology},
  second ed., Oxford Mathematical Monographs, The Clarendon Press Oxford
  University Press, New York, 1998.

\bibitem{milnor:h-cob}
John Milnor, \emph{Lectures on the {$h$}-cobordism theorem}, Notes by L.
  Siebenmann and J. Sondow, Princeton University Press, Princeton, N.J., 1965.

\bibitem{ng:satellite}
Lenhard~L. Ng, \emph{The {L}egendrian satellite construction}, Preprint
  available as arXiv:math/0112105, 2001.

\bibitem{lenny-lisa}
Lenhard~L. Ng and Lisa Traynor, \emph{Legendrian solid-torus links}, J.
  Symplectic Geom. \textbf{2} (2004), no.~3, 411--443.

\bibitem{josh-lisa:obstr}
Joshua~M. Sabloff and Lisa Traynor, \emph{Obstructions to {L}agrangian
  cobordisms between {L}egendrian submanifolds}, Algebr. Geom. Topol.
  \textbf{13} (2013), 2733--2797.

\bibitem{josh-lisa:rel-gr-width}
\bysame, \emph{The relative {G}romov width of {L}agrangian cobordisms between
  {L}egendrians}, J. Symplectic Geom. \textbf{18} (2020), no.~1, 217--250.
  \MR{4088752}

\end{thebibliography}

\end{document}